\newtheorem{thm}{Theorem}
\newtheorem{cor}[thm]{Corollary}
\newtheorem{ass}{Assumption}
\def \S {\mathbf{S}}
\def \R {\mathbb{R}}
\def \w {\mathbf{w}}
\def \x {\mathbf{x}}
\def \E {\mathrm{E}}
\def \x {\mathbf{x}}
\def \a {\mathbf{a}}
\def \b {\mathbf{b}}
\def \y {\mathbf{y}}
\def \g {\mathbf{g}}
\def \y {\mathbf{y}}
\def \E {\mathrm{E}}
\def \x {\mathbf{x}}
\def \g {\mathbf{g}}
\def \w {\mathbf{w}}
\def \R {\mathbb{R}}
\def \S {\mathcal{S}}
\def \vbf {\mathbf{v}}
\def \a {\mathbf{a}}
\def \b {\mathbf{b}}
\newcommand{\expect}{{\rm I\kern-.3em E}}
\begin{document}
\title[Non-smooth Non-convex Regularized Optimization]{Non-asymptotic Analysis of Stochastic Methods for\\ Non-Smooth Non-Convex Regularized Problems}
 \author{\Name{Yi Xu}$^\dagger$\Email{yi-xu@uiowa.edu}\\
 \Name{Rong Jin}$^\ddagger$\Email{jinrong.jr@alibaba-inc.com}\\
\Name{Tianbao Yang}$^\dagger$\Email{tianbao-yang@uiowa.edu}\\
   \addr $^\dagger$Department of Computer Science, The University of Iowa, Iowa City, IA 52242, USA  \\
   \addr$^\ddagger$Machine Intelligence Technology, Alibaba Group, Bellevue, WA 98004, USA
}
\maketitle
\vspace*{-0.5in}
\begin{center} 
First version: February 19, 2019 %\\
%Revised version: March 6, 2019\footnote{Revised version adds citations.}
\end{center}

\begin{abstract}
{\bf S}tochastic {\bf P}roximal {\bf G}radient (\textbf{SPG}) methods have been widely used for solving optimization problems with a simple (possibly non-smooth) regularizer in machine learning and statistics. However, to the best of our knowledge no non-asymptotic  convergence analysis of SPG exists for non-convex optimization with a {\bf non-smooth and non-convex regularizer}. All existing non-asymptotic  analysis of SPG for solving non-smooth non-convex problems require the non-smooth regularizer to be a convex function, and hence are not applicable to a non-smooth non-convex regularized problem. This work initiates the analysis to bridge this gap and opens the door to non-asymptotic convergence analysis of non-smooth non-convex regularized problems. We analyze several variants of {\bf mini-batch} SPG methods for minimizing a non-convex objective that consists of a smooth non-convex loss and a non-smooth non-convex regularizer. Our contributions are two-fold: (i) we show that they enjoy the same complexities as their counterparts for solving convex regularized non-convex problems in terms of finding an approximate stationary point; (ii) we  develop more practical variants using dynamic mini-batch size instead of a fixed mini-batch size without requiring  the target accuracy level of solution.  The significance of our results  is that they improve  upon the-state-of-art results for solving non-smooth non-convex regularized problems. We also empirically demonstrate the effectiveness of the considered SPG methods in comparison with other peer stochastic methods.
\end{abstract}

\section{Introduction} 
In this work, we consider the following stochastic non-smooth non-convex optimization problem:
\begin{align}\label{eqn:Pf}
\min_{\x\in\R^d} F(\x) : =\underbrace{ \E_\xi[f(\x; \xi)]}\limits_{f(\x)}+ r(\x),
\end{align}
where $\xi$ is a random variable, $f(\x):\R^d \to \R $ is a smooth non-convex function, and $r(\x):\R^d\to\R$ is a proper non-smooth non-convex lower-semicontinuous function.
A special case  of problem (\ref{eqn:Pf}) in machine learning is of the following finite-sum form:
\begin{align}\label{eqn:finitesum}
\min_{\x\in\R^d} F(\x) : =\underbrace{ \frac{1}{n}\sum_{i=1}^{n} f_i(\x)}\limits_{f(\x)}+ r(\x),
\end{align}
where $n$ is the number of data samples. In the sequel, we refer to the problem~(\ref{eqn:Pf}) with a finite-sum structure as in the finite-sum setting and otherwise as in the online setting~\citep{metel2019stochastic,xu2018stochastic}. The family of optimization problems with a non-convex smooth loss and a non-convex non-smooth regularizer is important and broad in machine learning and statistics. Examples of smooth non-convex losses include non-linear square loss for classification~\citep{Goodfellow-et-al-2016},  truncated square loss for regression~\citep{DBLP:journals/corr/abs-1805-07880}, and cross-entropy loss for learning a neural network with a smooth activation function. Examples of non-smooth non-convex regualerizers include  $\ell_p$ ($0\leq p<1$) norm, smoothly clipped absolute deviation (SCAD)~\citep{CIS-172933}, log-sum penalty (LSP)~\citep{Candades2008}, minimax concave penalty (MCP)~\citep{cunzhang10}, and an indicator function of a non-convex constraint as well (e.g., $\|\x\|_0\leq k$). 

Although non-convex minimization with a  non-smooth  convex regularizer  has been extensively studied in both online setting~\citep{DBLP:journals/mp/GhadimiLZ16,davis2019stochastic,wang2018spiderboost,pham2019proxsarah} and finite-sum setting~\citep{DBLP:conf/nips/DefazioBL14,reddi2016proximal,DBLP:conf/icml/Allen-Zhu17,paquette2018catalyst,li2018simple,chen2018variance,wang2018spiderboost,pham2019proxsarah}, stochastic optimization for the considered problem with a non-smooth non-convex regularizer is still under-explored.  The presence of non-smooth non-convex functions $r$ makes the analysis more challenging, which renders previous analysis that hinges on the convexity of $r$ not applicable. A special case of non-convex  $r$ that can be written as a DC (Difference of Convex) function, i.e., $r(\x) = r_1(\x) - r_2(\x)$ with $r_1$ and $r_2$ being convex,   has been recently tackled by several studies with stochastic algorithms~\citep{xu2018stochastic,pmlr-v54-nitanda17a,pmlr-v70-thi17a}. In this paper, we focus on first-order stochastic algorithms for solving the problem~(\ref{eqn:Pf}) with a general non-smooth non-convex regularizer and study their non-asymptotic convergence rates. 

\begin{savenotes}
\begin{table*}[t]
		\caption{Summary of Complexities for finding an $\epsilon$-stationary point of~(\ref{eqn:Pf}). LC denotes Lipchitz continuous function;  FV means finite-valued over $\R^d$; PM denotes the proximal mapping exists and can be obtained efficiently.  $\widetilde  O(\cdot)$  suppresses a logarithmic factor in terms of $\epsilon^{-1}$. }
		\centering
		\label{tab:1}
		{\begin{tabular}{l|l|l|ll}
			\toprule
			Problem&Algorithm &complexity&$r(\x)$\\
			\midrule
			Online&MBSGA~\citep{metel2019stochastic}&$O( \epsilon^{-5})$& PM, LC \\
			Online&SSDC-SPG~\citep{xu2018stochastic}&$O( \epsilon^{-5})$& PM, LC \\
			Online&SSDC-SPG~\citep{xu2018stochastic}&$O( \epsilon^{-6})$&PM, FV\\
			Online&{\bf MB-SPG (this work)}&$O( \epsilon^{-4})$&PM\\
			Online&{\bf SPGR  (this work)}&$O( \epsilon^{-3})$&PM\\
			\midrule
			Finite-sum&VRSGA~\citep{metel2019stochastic}&$O(n^{2/3}\epsilon^{-3})$ & PM, LC \\
			Finite-sum&SSDC-SVRG~\citep{xu2018stochastic}&$\widetilde  O(n \epsilon^{-3})$&PM, LC\\
			Finite-sum&SSDC-SVRG~\citep{xu2018stochastic}&$\widetilde  O(n \epsilon^{-4})$&PM, FV\\
			Finite-sum&{\bf SPGR  (this work)}&$O(n^{1/2} \epsilon^{-2} + n)$ &PM\\
     			\bottomrule
		\end{tabular}}
	\end{table*}
\end{savenotes}
Although there are plenty of studies devoted to non-smooth non-convex regularized problems~\citep{Attouch2013,Bolte:2014:PAL:2650160.2650169,DBLP:conf/aaai/ZhongK14,li2015global,Li:2015:APG:2969239.2969282,YuZMX15,Bot2016,doi:10.1080/02331934.2016.1253694,leiyangpg18,liu2017successive},  they are restricted to deterministic algorithms and  asymptotic or local convergence analysis. There are {\it few}  studies concerned with the non-asymptotic convergence analysis of stochastic algorithms for the problem~(\ref{eqn:Pf}). To the best of our knowledge, \citep{xu2018stochastic} is the first work that presents stochastic algorithms with  non-asymptotic convergence results for  finding an approximate critical point of a non-convex problem with a non-convex non-smooth regularizer. Indeed, they considered a more general problem in which $f$ is a DC function and assumed that the second component of the DC decomposition of $f$ has a H\"{o}lder-continuous gradient. Their convergence results are the state-of-the-art for stochastic optimization of the problem~(\ref{eqn:Pf}) in the online setting. Later, \cite{metel2019stochastic} presented two algorithms, namely mini-batch stochastic gradient algorithm (MBSGA) and variance reduced stochastic gradient algorithm (VRSGA),  for solving~(\ref{eqn:Pf}) and~(\ref{eqn:finitesum}) with an improved complexity for the finite-sum setting.  To tackle the non-smooth non-convex regularizer, both of these works use a  Moreau envelope of $r$ to approximate $r$, which inevitably introduces approximation error and hence worsen the convergence rates. 

A simple idea for tackling a non-smooth regularizer is to use proximal gradient methods, which has been studied extensively in the literature for a convex regularizer~\citep{DBLP:journals/mp/GhadimiLZ16,davis2019stochastic,DBLP:conf/nips/DefazioBL14,reddi2016proximal,DBLP:conf/icml/Allen-Zhu17,paquette2018catalyst,li2018simple,chen2018variance,wang2018spiderboost,pham2019proxsarah}. A natural question is whether stochastic proximal gradient (SPG) methods still enjoy similar convergence guarantee for solving a non-smooth non-convex regularized problem as their counterparts for convex regularized non-convex minimization problems.  %  
In this paper, we provide an affirmative answer to this question. Our contributions are summarized below: 
\begin{itemize}[leftmargin=*]
\item We establish {\bf the first convergence rate} of standard mini-batch SPG (MB-SPG) for solving~(\ref{eqn:Pf})  in terms of finding an approximate stationary point, which is the same as its counterpart for solving a non-convex minimization problem with a convex regularizer~\citep{DBLP:journals/mp/GhadimiLZ16}. 
\item  Furthermore,  we analyze improved variants of mini-batch SPG that use a recursive stochastic gradient estimator (SARAH~\citep{nguyen2017sarah,nguyen2017stochastic} or SPIDER~\citep{fang2018spider,wang2018spiderboost}) referred to as SPGR, and achieve {\bf the new state of the art} convergence results for both online setting and the finite-sum setting. 
\item Moreover, we propose {\bf more practical} variants of MB-SPG and SPGR by using dynamic mini-batch size instead of a fixed mini-batch size to remove the requirement on  the target accuracy level of solution for running the algorithms.  
\end{itemize}
The complexity results of our algorithms and other works for finding an $\epsilon$-stationary solution of the considered problem are summarized in  Table~\ref{tab:1}. It is notable that the complexity result of SPGR for the finite-sum setting is optimal matching an existing lower bound~\citep{fang2018spider}. Before ending this section, it is worth mentioning that the differences between this work and \citep{davis2018stochastic} that provides the first  convergence analysis of SPG to critical points of a non-smooth non-convex minimization problem: (i) their convergence analysis is asymptotic and hence provides no convergence rate; (ii) their analysis applies to non-smooth $f$ but requires stronger assumptions on $r$ (e.g., local Lipchitz continuity) that precludes $\ell_0$ norm regularizer or an indicator function of a non-convex constraint; (ii) their analyzed SPG imposes no requirement on the mini-batch size. 

\section{Preliminaries}
In this section, we present some preliminaries and notations. Let $\|\x\|$ denote the Euclidean norm of a vector $\x\in\R^d$.  Denote by $\S= \{\xi_1, \ldots, \xi_m\}$ a set of random variables, let $|\S|$ be the number of elements in set $\S$ and $f_{\S}(\x) = \frac{1}{|\S|}\sum_{\xi_i\in \S}f(\x; \xi_i)$. We denote by $\text{dist}(\x, \S)$ the distance between the vector $\x$ and a set $\S$. Denote by $\hat\partial h(\x)$ the Fr\'{e}chet subgradient and $\partial h(\x)$ the limiting subgradient of a non-convex function $h(\x): \R^d\rightarrow \R$, i.e., 
\begin{align*}
\hat\partial h(\bar\x)  = &\left\{\vbf\in\R^d: \lim_{\x\rightarrow\bar \x}\inf \frac{h(\x) - h(\bar \x) - \vbf^{\top}(\x - \bar \x)}{\|\x - \bar\x\|}\geq 0\right\},\\
\partial h(\bar\x) = & \{\vbf\in\R^d: \exists \x_k \xrightarrow[]{h} \bar\x, v_k\in\hat\partial h(\x_k), \vbf_k\rightarrow \vbf\},
\end{align*}
where $\x\xrightarrow[]{h} \bar \x$ means $\x\rightarrow \bar\x$ and $h(\x)\rightarrow h(\bar\x)$. 

We aim to find an $\epsilon$-stationary point of problem~(\ref{eqn:Pf}), i.e., to find a solution $\x$ such that
\begin{align}\label{stationary:0}
\text{dist}(0, \hat\partial F(\x)) \leq \epsilon. 
\end{align}
Since $f$ is differentiable, then we have $\hat\partial F(\x) = \hat\partial (f+r)(\x) = \nabla f(\x) + \hat\partial r(\x)$~(see Exercise 8.8, \citep{RockWets98}). Thus, it is equivalent to find a solution $\x$ satisfying
\begin{align}\label{stationary}
\text{dist}(0, \nabla f(\x) + \hat\partial r(\x)) \leq \epsilon.
\end{align}
For problem (\ref{eqn:Pf}), we make the following basic assumptions, which are standard in the literature on stochastic gradient methods for non-convex optimization.
\begin{ass}\label{ass:1}
Assume the following conditions hold: 
\begin{itemize}
\item[(i)]  $\E_{\xi}[\nabla f(\x;\xi)] = \nabla f(\x)$, and there exists a constant $\sigma>0$, such that $\E_{\xi}[\|\nabla f(\x;\xi) - \nabla f(\x)\|^2] \leq \sigma^2$.
\item[(ii)]  Given an initial point $\x_0$, there exists $\Delta < \infty$ such that $F(\x_0) - F(\x_*)\leq \Delta$, where $\x_*$ denotes the global minimum of (\ref{eqn:Pf}).
\item[(iii)]  $f(\x)$ is smooth with a $L$-Lipchitz continuous gradient, i.e., it is differentiable and there exists a constant $L>0$ such that $\|\nabla f(\x)  - \nabla f(\y)\|\leq L\|\x - \y\| ,\forall\x, \y$.
\end{itemize}
\end{ass}
In addition,  we assume $r(\x)$ is simple enough such that its proximal mapping exists and can be obtained efficiently:
\begin{align*}
\text{prox}_{\eta r}[\x]=\arg\min_{\y\in\R^d}\frac{1}{2\eta}\|\y - \x\|^2 + r(\y).
\end{align*}
This assumption is standard to proximal algorithms for non-convex functions~\citep{Attouch2013,bredies2015minimization,DBLP:journals/mp/LiP16}. The notation $\arg\min$ denotes the set of minimizers. %In the view of assumption the proper, lower-semicontinuous and coercive of $r(\x)$ imply that the set $\text{prox}_{\eta r}[\x]$ is not empty~\citep{Attouch2013,bredies2015minimization}.  
The closed form of proximal mapping for non-convex regularizers include hard thresholding for $\ell_0$ regularizer~\citep{Attouch2013}, and $\ell_p$ thresholding for $\ell_{1/2}$ regularizer~\citep{xu2012l12} and $\ell_{2/3}$ regularizer~\citep{cao2013fast}.

An immediate difficulty in solving problem~(\ref{eqn:Pf}) is the presence of non-smoothness non-convexity in the regularizer $r(\x)$.
To deal with this issue,  \cite{xu2018stochastic,metel2019stochastic} use the  the Moreau envelope of $r$ to approximate $r$, which is defined as
\begin{align*}
r_{\mu}(\x) = \min_{\y\in\R^d}\frac{1}{2\mu}\|\y - \x\|^2 + r(\y),
\end{align*}
where $\mu>0$ is an approximation parameter. It is easy to see that the Moreau envelope of $r(\x)$ is a DC function: 
\begin{align*}
r_{\mu}(\x)= \frac{1}{2\mu}\|\x\|^2  - \underbrace{\max_{\y\in\R^d}\frac{1}{\mu}\y^{\top}\x - \frac{1}{2\mu}\|\y\|^2 - r(\y)}\limits_{R_\mu(\x)},
\end{align*}
where $R_\mu(\x)$ is convex since it is the max of convex functions in terms of $\x$~\citep{boyd-2004-convex}. 
Instead of solving the problem~(\ref{eqn:Pf}) directly, their idea is to solve the following approximated problem:
\begin{align}\label{eqn:dc}
\min_{\x\in\R^d} F_{\mu}(\x) :=  f(\x) +  \frac{1}{2\mu}\|\x\|^2 - R_{\mu}(\x). %= \underbrace{f(\x)  +  \frac{1}{2\mu}\|\x\|^2}\limits_{\hat f(\x)} - \underbrace{R_{\mu}(\x)}\limits_{\hat h(\x)}.
\end{align}
%It is notable that this idea was originally due to~\cite{liu2017successive}, and was recently adopted in~\citep{xu2018stochastic,metel2019stochastic} for developing stochastic algorithms with non-asymptotic convergence results. 
However, this is a bad idea because it introduces the approximation error on one hand and slows down the convergence on the other hand. For example, \cite{metel2019stochastic} considers algorithms that  update the solution based on a smooth function that is constructed  by linearizing the term $R_{\mu}(\x)$. As a result, the smoothness constant of the resulting function is proportional to $1/\mu$. In order to maintain a small approximation error, $\mu$ has to be a small value which amplifies the smoothness constant dramatically.

In this paper, we consider a direct approach that updates the solution simply by a stochastic proximal gradient update, i.e.,  $\x_{t+1}\in  \text{prox}_{\eta r}[\x_t - \eta \g_t]$,
%\begin{align}
%\x_{t+1} \in  \min_{\x\in\R^d} \left\{ r(\x) + \g_t^{\top}\x + \frac{1}{2\eta}\|\x - \x_t\|^2\right\},
%\end{align}
where $\g_t$ is a stochastic gradient of $\nabla f(\x_t)$ with well-controlled variance, and $\eta$ is a step size.

\subsection{Warm-up: Proximal Gradient Descent Method}
As a warm-up, we first present the analysis of the deterministic proximal gradient descent (PGD) method (also known as forward-backward splitting, FBS), which updates the solutions for $t=0,\dots, T-1$ iteratively given an initial solution $\x_0$:
\begin{align}\label{update:proxGD}
\x_{t+1} \in& \text{prox}_{\eta r}[\x_t - \eta \nabla f(\x_t)] = \arg\min_{\x\in\R^d}\left\{ r(\x) +\langle \nabla f(\x_t), \x-\x_t\rangle + \frac{1}{2\eta }\|\x- \x_t\|^2\right\}, 
\end{align}
where $\eta$ is a step size. We present the detailed updates of PGD in Algorithm~\ref{alg:0}. To our knowledge, non-asymptotic analysis of PGD for non-convex $r(\x)$ is not available, though asymptotic analysis of PGD was provided in~\citep{Attouch2013}.   
\begin{algorithm}[t]
\caption{Proximal Gradient Descent: PGD($\x_0$, $T$, $L$, $c$)}\label{alg:0}
\begin{algorithmic}[1]
\STATE \textbf{Input}:  $\x_0\in\R^d$, the number of iterations $T$, $\eta = \frac{c}{L}$ with $0<c<1$.
\FOR{$t=0,1,\ldots,T-1$}
\STATE $\x_{t+1} \in \text{prox}_{\eta r}[\x_t - \eta \nabla f(\x_t)] $
\ENDFOR
\STATE  \textbf{Output: }$\x_R$, where $R$ is uniformly sampled from $\{1, \dots, T\}$.
\end{algorithmic}
\end{algorithm}
We summarize the non-asymptotic convergence result of PGD  in the following theorem, and provide a proof sketch to highlight the key steps. The detailed proofs are provided in the supplement. 
\begin{theorem}\label{thm:proxGD}
Suppose Assumption~\ref{ass:1} (iii) and (iv) hold, run Algorithm~\ref{alg:0} with $\eta = \frac{c}{L}$ ($0<c<1$) and $T = \frac{4(\eta^2 L^2+1)}{\eta(1-\eta L)\epsilon^2} \Delta = O(1/\epsilon^2)$, then the output $\x_R$ of Algorithm~\ref{alg:0} satisfies
\begin{align*}
\E[\text{dist}(0,\hat\partial F(\x_{R}))]\leq \epsilon.
\end{align*}
The iteration complexity is $O(1/\epsilon^{2})$.
\end{theorem}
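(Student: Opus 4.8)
The plan is to reduce the convergence statement to two elementary inequalities that are standard in non-convex proximal analysis — a \emph{sufficient decrease} inequality for $F$ and an upper bound on the stationarity measure in terms of the successive-iterate gap $\|\x_{t+1}-\x_t\|$ — and then to couple them by telescoping. Concretely, I will show that for every $t$ one has both $F(\x_t)-F(\x_{t+1})\ge \frac{1-\eta L}{2\eta}\|\x_{t+1}-\x_t\|^2$ and $\text{dist}(0,\hat\partial F(\x_{t+1}))^2 \le \frac{2(\eta^2L^2+1)}{\eta^2}\|\x_{t+1}-\x_t\|^2$. Summing the first from $t=0$ to $T-1$ and using $F(\x_0)-F(\x_T)\le \Delta$ bounds $\sum_t\|\x_{t+1}-\x_t\|^2$; plugging this into the sum of the second, dividing by $T$, and invoking the uniform choice of $R$ yields a bound on $\E[\text{dist}(0,\hat\partial F(\x_R))^2]$, after which Jensen's inequality gives the claim.

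First I would extract the stationarity bound from the first-order optimality of the prox step. Since $\x_{t+1}$ globally minimizes $g(\x)=r(\x)+\langle\nabla f(\x_t),\x-\x_t\rangle+\frac{1}{2\eta}\|\x-\x_t\|^2$, Fermat's rule together with the sum rule for the Fr\'echet subdifferential (adding the smooth quadratic part, as in Exercise 8.8 of \citep{RockWets98}) gives $0\in \nabla f(\x_t)+\frac{1}{\eta}(\x_{t+1}-\x_t)+\hat\partial r(\x_{t+1})$, hence $-\nabla f(\x_t)-\frac{1}{\eta}(\x_{t+1}-\x_t)\in\hat\partial r(\x_{t+1})$. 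Adding $\nabla f(\x_{t+1})$ to both sides and using $\hat\partial F(\x_{t+1})=\nabla f(\x_{t+1})+\hat\partial r(\x_{t+1})$ exhibits the vector $\nabla f(\x_{t+1})-\nabla f(\x_t)-\frac{1}{\eta}(\x_{t+1}-\x_t)$ as an element of $\hat\partial F(\x_{t+1})$. The distance from $0$ is therefore at most its norm, and $L$-smoothness of $f$ together with $\|a+b\|^2\le 2\|a\|^2+2\|b\|^2$ yields the claimed $O(\|\x_{t+1}-\x_t\|^2)$ bound.

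Next I would establish the sufficient decrease. Evaluating $g$ at its minimizer $\x_{t+1}$ against its value at $\x_t$ gives $r(\x_{t+1})-r(\x_t)\le -\langle\nabla f(\x_t),\x_{t+1}-\x_t\rangle-\frac{1}{2\eta}\|\x_{t+1}-\x_t\|^2$; adding the descent-lemma estimate $f(\x_{t+1})\le f(\x_t)+\langle\nabla f(\x_t),\x_{t+1}-\x_t\rangle+\frac{L}{2}\|\x_{t+1}-\x_t\|^2$ cancels the inner-product terms and leaves $F(\x_{t+1})\le F(\x_t)-\frac{1-\eta L}{2\eta}\|\x_{t+1}-\x_t\|^2$, where $1-\eta L=1-c>0$. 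Telescoping this, combining with the stationarity bound, dividing by $T$, and substituting the prescribed value of $T$ closes the argument via Jensen.

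I expect the only genuinely non-trivial point to be the first step: the optimality condition $0\in\nabla f(\x_t)+\frac{1}{\eta}(\x_{t+1}-\x_t)+\hat\partial r(\x_{t+1})$. In the familiar convex-$r$ setting this is immediate from convex subdifferential calculus, but here $r$ is non-convex, so I must rely on the Fr\'echet (regular) subdifferential and its exact sum rule under the addition of a smooth function — precisely the calculus that makes the Fr\'echet subgradient the right stationarity notion for \eqref{stationary}. Everything downstream (the sufficient-decrease inequality and the telescoping) is insensitive to convexity of $r$, since it uses only that $\x_{t+1}$ is a global minimizer of $g$ and that $f$ is $L$-smooth, so the convex-case proof transfers essentially verbatim once this optimality condition is in hand.
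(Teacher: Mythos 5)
Your proposal is correct and follows essentially the same route as the paper's own proof: the same Fr\'echet-subdifferential optimality condition $-\nabla f(\x_t)-\frac{1}{\eta}(\x_{t+1}-\x_t)\in\hat\partial r(\x_{t+1})$ producing the subgradient certificate, the same sufficient-decrease inequality $F(\x_t)-F(\x_{t+1})\ge\frac{1-\eta L}{2\eta}\|\x_{t+1}-\x_t\|^2$ obtained by adding the prox-minimizer inequality to the descent lemma, and the same telescoping plus Jensen step, with matching constants (your $\frac{2(\eta^2L^2+1)}{\eta^2}$ is exactly the paper's $2(L^2+\frac{1}{\eta^2})$). No gaps.
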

{\bf Remark: } It is notable that this complexity result is optimal according to~\citep{carmonlowerbound} for smooth non-convex optimization, which is  the same as that  for solving problem~(\ref{eqn:Pf}) when $r(\x)$ is convex~\citep{Composite}.

\begin{proof}{\bf Sketch.}
For the update (\ref{update:proxGD}), we can only leverage its optimality condition (e.g., by Exercise 8.8 and Theorem 10.1 of \citep{RockWets98}):
\begin{align*}
 & -\nabla f(\x_t) - \frac{1}{\eta }(\x_{t+1}- \x_t) \in \hat \partial r(\x_{t+1}),\\
  &r(\x_{t+1}) +\langle \nabla f(\x_t), \x_{t+1}-\x_t\rangle + \frac{1}{2\eta }\|\x_{t+1}- \x_t\|^2 \leq r(\x_t),
\end{align*}
where the first implies that $\nabla f(\x_{t+1}) -\nabla f(\x_t) - \frac{1}{\eta }(\x_{t+1}- \x_t) \in \hat \partial F(\x_{t+1})$.
Combining the second inequality with  the smoothness of $f(\x)$, i.e., $ f(\x_{t+1}) \leq f(\x_t) +\langle \nabla f(\x_t), \x_{t+1}-\x_t\rangle + \frac{L}{2}\|\x_{t+1}- \x_t\|^2$, we get
\begin{align}\label{proxGD:ineq4}
\frac{1}{2}(1/\eta -L)\|\x_{t+1}- \x_t\|^2 \leq F(\x_t) -  F(\x_{t+1}).
\end{align}
By telescoping the above inequality and connecting $ \hat\partial F(\x_{t+1})$ with $\|\x_{t+1} - \x_t\|$ we can finish the proof.
\end{proof}

%{\bf Remark: } By the setting of $\eta$, it is easy to know from (\ref{proxGD:ineq4}) that $F(\x_t) \geq F(\x_{t+1})$, showing that PGD makes sure the objective value of the considered problem is not increasing. It is worth mentioning that based on the sampling method of $R$, $\E[\text{dist}(0,\hat\partial F(\x_{R}))]$ can be upper bounded by the term $\frac{1}{T}\sum_{t=0}^{T-1}\|\x_{t+1}- \x_t\|^2$, which is the key to our analysis. To upper bound the term $\frac{1}{T}\sum_{t=0}^{T-1}\|\x_{t+1}- \x_t\|^2$, we employ (\ref{proxGD:ineq4}) by telescoping sum. For stochastic algorithms, there will be an additional term,  i.e., the sum of variance of stochastic gradients, in the upper bound of $\E[\text{dist}(0,\hat\partial F(\x_{R}))]$. In the next section, we will present our proposed new stochastic algorithms for solving problem~(\ref{eqn:Pf}) that use the mini-batching techniques to control  the variance of stochastic gradients.

\section{Mini-batch Stochastic Proximal Gradient Methods }
In this and next section, we analyze mini-batch stochastic proximal gradient methods that use a stochastic gradient $\g_t$ for updating the solution. The key idea of the two methods is to control the variance of the stochastic gradient properly.

We present the detailed updates of the first algorithm (named MB-SPG) in Algorithm~\ref{alg:1}, which is to update the solution based on a mini-batched stochastic gradient of $f(\x)$ at the $t$-th iteration and the proximal mapping of $r(\x)$. We first present a general convergence result of Algorithm~\ref{alg:1}. 
\begin{algorithm}[t]
\caption{Mini-Batch Stochastic Proximal  Gradient: MB-SPG}\label{alg:1}
\begin{algorithmic}[1]
\STATE \textbf{Initialize}:  $\x_0\in\R^d$, $\eta = \frac{c}{L}$ with $0<c<\frac{1}{2}$.
\FOR{$t=0,1,\ldots,T-1$}
\STATE Draw samples $\S_t=\{\xi_{i}, \ldots, \xi_{m_t}\}$, let $\g_t =\frac{1}{m_t}\sum_{{i_t}=1}^{m_t} \nabla f(\x_t;\xi_{i_t})$
\STATE $\x_{t+1} \in \text{prox}_{\eta r}[\x_t - \eta \g_t]$
\ENDFOR
\STATE  \textbf{Output: }$\x_R$, where $R$ is uniformly sampled from $\{1, \dots, T\}$.
\end{algorithmic}
\end{algorithm}

\begin{theorem}\label{thm:proxSG}
Suppose Assumption~\ref{ass:1} holds, run Algorithm~\ref{alg:1} with $\eta = \frac{c}{L}$ ($0<c<\frac{1}{2}$), then the output $\x_R$ of Algorithm~\ref{alg:1} satisfies
\begin{align*}
\E[\text{dist}(0,\hat\partial F(\x_{R}))^2]\leq \frac{c_1}{T}\sum_{t=0}^{T-1}\E[\|\g_t-\nabla f(\x_{t})\|^2]+\frac{c_2\Delta}{\eta T},
\end{align*}
where $c_1 =  \frac{2c(1 -2c)+2}{c(1-2c)}$ and $c_2 = \frac{6-4c}{1-2c}$ are two positive constants.
\end{theorem}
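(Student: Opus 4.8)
The plan is to mirror the warm-up analysis of PGD (Theorem~\ref{thm:proxGD}), but with the mini-batch estimate $\g_t$ in place of $\nabla f(\x_t)$; the price is an extra stochastic cross term that I will absorb by Young's inequality rather than by unbiasedness. I first record the two optimality conditions of the stochastic prox step $\x_{t+1}\in\text{prox}_{\eta r}[\x_t-\eta\g_t]$: the subgradient inclusion $-\g_t-\frac1\eta(\x_{t+1}-\x_t)\in\hat\partial r(\x_{t+1})$, and the sufficient-decrease inequality $r(\x_{t+1})+\langle\g_t,\x_{t+1}-\x_t\rangle+\frac1{2\eta}\|\x_{t+1}-\x_t\|^2\le r(\x_t)$. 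Since $f$ is differentiable, the inclusion exhibits an explicit element $\vbf_{t+1}:=\nabla f(\x_{t+1})-\g_t-\frac1\eta(\x_{t+1}-\x_t)\in\hat\partial F(\x_{t+1})$, so that $\text{dist}(0,\hat\partial F(\x_{t+1}))^2\le\|\vbf_{t+1}\|^2$.

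The first key step is to bound $\|\vbf_{t+1}\|^2$. Adding and subtracting $\nabla f(\x_t)$ I split $\vbf_{t+1}=[\nabla f(\x_{t+1})-\nabla f(\x_t)-\frac1\eta(\x_{t+1}-\x_t)]+[\nabla f(\x_t)-\g_t]$, and apply $\|\a+\b\|^2\le 2\|\a\|^2+2\|\b\|^2$ together with the $L$-Lipschitz continuity of $\nabla f$ (Assumption~\ref{ass:1}(iii)) to obtain a bound of the form $\text{dist}(0,\hat\partial F(\x_{t+1}))^2\le A\,\|\x_{t+1}-\x_t\|^2+B\,\|\g_t-\nabla f(\x_t)\|^2$, where $A$ is a multiple of $L^2+1/\eta^2$ and $B$ is an absolute constant. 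This is the stochastic analogue of the ``connect $\hat\partial F(\x_{t+1})$ with $\|\x_{t+1}-\x_t\|$'' step in the PGD sketch, now carrying an additional variance contribution.

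The second key step is the descent inequality. Summing the sufficient-decrease inequality with the smoothness bound $f(\x_{t+1})\le f(\x_t)+\langle\nabla f(\x_t),\x_{t+1}-\x_t\rangle+\frac L2\|\x_{t+1}-\x_t\|^2$ gives $F(\x_{t+1})\le F(\x_t)+\langle\nabla f(\x_t)-\g_t,\x_{t+1}-\x_t\rangle-(\frac1{2\eta}-\frac L2)\|\x_{t+1}-\x_t\|^2$. The cross term $\langle\nabla f(\x_t)-\g_t,\x_{t+1}-\x_t\rangle$ is the crux: because $\x_{t+1}$ depends on $\g_t$, it does not vanish in conditional expectation, so I bound it pathwise by Young's inequality $\langle\a,\b\rangle\le\frac1{2\lambda}\|\a\|^2+\frac\lambda2\|\b\|^2$. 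Choosing $\lambda$ of order $L$ leaves a net coefficient $(\frac1{2\eta}-\frac L2-\frac\lambda2)$ on $\|\x_{t+1}-\x_t\|^2$, which is positive precisely when $c<\frac12$; this is exactly why the admissible step-size range shrinks from $0<c<1$ in PGD to $0<c<\frac12$ here. Rearranging yields $\|\x_{t+1}-\x_t\|^2\le\frac1\gamma[F(\x_t)-F(\x_{t+1})]+\frac1{2\lambda\gamma}\|\g_t-\nabla f(\x_t)\|^2$ with $\gamma>0$.

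To finish, I substitute this descent bound for $\|\x_{t+1}-\x_t\|^2$ into the stationarity bound, producing a per-iteration inequality whose right-hand side is a multiple of $F(\x_t)-F(\x_{t+1})$ plus a multiple of $\|\g_t-\nabla f(\x_t)\|^2$. Summing over $t=0,\dots,T-1$, the objective gaps telescope and are controlled by $F(\x_0)-F(\x_*)\le\Delta$ (Assumption~\ref{ass:1}(ii)), while $R$ being uniform on $\{1,\dots,T\}$ identifies $\frac1T\sum_{t=0}^{T-1}\text{dist}(0,\hat\partial F(\x_{t+1}))^2$ with $\E_R[\text{dist}(0,\hat\partial F(\x_R))^2]$; taking total expectation and collecting the two coefficients gives the stated $c_1$ and $c_2$. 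I expect the main obstacle to be the stochastic cross term and, relatedly, the bookkeeping in the joint choice of the Young parameter $\lambda$ and the split constants $A,B$ that simultaneously keeps $\gamma>0$ under $c<\frac12$ and reproduces the closed-form constants; note that the argument is pathwise up to the final expectation, so unbiasedness of $\g_t$ is not actually needed for this particular bound.
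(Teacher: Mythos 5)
Your proposal follows the same skeleton as the paper's proof: the two optimality conditions of the prox step, the descent inequality obtained by adding the sufficient-decrease bound to the smoothness bound, Young's inequality with parameter $\lambda=L$ on the cross term $\langle \g_t-\nabla f(\x_t),\x_{t+1}-\x_t\rangle$ (which is exactly where the restriction $c<\tfrac12$ enters, as you say), telescoping against $\Delta$, and the uniform choice of $R$. The one place you genuinely diverge is the stationarity bound. You bound $\|\vbf_{t+1}\|^2$ by splitting $\vbf_{t+1}$ into a deterministic part plus $\nabla f(\x_t)-\g_t$ and applying $\|\a+\b\|^2\le 2\|\a\|^2+2\|\b\|^2$, which gives $\text{dist}(0,\hat\partial F(\x_{t+1}))^2\le 4(L^2+\tfrac{1}{\eta^2})\|\x_{t+1}-\x_t\|^2+2\|\g_t-\nabla f(\x_t)\|^2$. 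The paper instead multiplies the descent inequality by $2/\eta$ and uses the exact polarization identity $2\langle \u,\tfrac{1}{\eta}(\x_{t+1}-\x_t)\rangle=\|\u+\tfrac{1}{\eta}(\x_{t+1}-\x_t)\|^2-\|\u\|^2-\tfrac{1}{\eta^2}\|\x_{t+1}-\x_t\|^2$ with $\u=\g_t-\nabla f(\x_{t+1})$, so that $\|\vbf_{t+1}\|^2$ is bounded by $2\|\g_t-\nabla f(\x_t)\|^2+\tfrac{2}{\eta}(F(\x_t)-F(\x_{t+1}))+(2L^2+\tfrac{3L}{\eta})\|\x_{t+1}-\x_t\|^2$; the function decrease thus appears a second time directly in the stationarity bound, and the coefficient on $\|\x_{t+1}-\x_t\|^2$ is at most $2/\eta^2$ rather than your $4L^2+4/\eta^2$. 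Both routes yield a bound of the stated form with the same order in $T$ and in the variance, so your argument is sound and all downstream corollaries survive; however, carrying your constants through gives $c_1'=\tfrac{2c+4}{c(1-2c)}$ and $c_2'=\tfrac{8(1+c^2)}{1-2c}$, which are strictly larger than the paper's $c_1=\tfrac{2c(1-2c)+2}{c(1-2c)}$ and $c_2=\tfrac{6-4c}{1-2c}$ for every $c\in(0,\tfrac12)$, so the particular closed forms in the theorem statement cannot be reproduced by the direct Young split — you correctly anticipated that this bookkeeping would be the sticking point, and the fix is precisely the identity-based step above.
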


\begin{proof}
Recall that the update of $\x_{t+1}$ is
\begin{align*}
\x_{t+1} 
\in  & \arg\min_{\x\in\R^d}\left\{ r(\x) + \frac{1}{2\eta }\|\x-(\x_t - \eta \g_t)\|^2\right\}\\
= & \arg\min_{\x\in\R^d}\left\{ r(\x) +\langle \g_t, \x-\x_t\rangle + \frac{1}{2\eta }\|\x- \x_t\|^2\right\},
\end{align*}
then by Exercise 8.8 and Theorem 10.1 of \citep{RockWets98} we know
\begin{align*}
  -\g_t - \frac{1}{\eta }(\x_{t+1}- \x_t) \in \hat \partial r(\x_{t+1}),
\end{align*}
which implies that
\begin{align}\label{proxSG:ineq1}
\nabla f(\x_{t+1}) -\g_t - \frac{1}{\eta }(\x_{t+1}- \x_t) \in \nabla f(\x_{t+1})  +  \hat \partial r(\x_{t+1}) = \hat \partial F(\x_{t+1}).
\end{align}
By the update of $\x_{t+1}$ in Algorithm~\ref{ass:1}, we also have
\begin{align}\label{proxSG:ineq2}
 r(\x_{t+1}) +\langle \g_t, \x_{t+1}-\x_t\rangle + \frac{1}{2\eta }\|\x_{t+1}- \x_t\|^2 \leq r(\x_t).
\end{align}
Since $f(\x)$ is smooth with parameter $L$, then
\begin{align}\label{proxSG:ineq3}
 f(\x_{t+1}) \leq f(\x_t) +\langle \nabla f(\x_t), \x_{t+1}-\x_t\rangle + \frac{L}{2}\|\x_{t+1}- \x_t\|^2. 
\end{align}
Combining these two inequalities (\ref{proxSG:ineq2}) and (\ref{proxSG:ineq3}) we get
\begin{align}\label{proxSG:ineq4}
 \langle \g_t -\nabla f(\x_t), \x_{t+1}-\x_t\rangle + \frac{1}{2}(1/\eta -L)\|\x_{t+1}- \x_t\|^2 \leq F(\x_t) -  F(\x_{t+1}).
\end{align}
That is
\begin{align*}
 \frac{1}{2}(1/\eta -L)\|\x_{t+1}- \x_t\|^2
 \leq & F(\x_t) -  F(\x_{t+1}) - \langle \g_t-\nabla f(\x_t), \x_{t+1}-\x_t\rangle\\
 \leq & F(\x_t) -  F(\x_{t+1}) + \frac{1}{2L}\|\g_t-\nabla f(\x_t)\|^2 + \frac{L}{2}\|\x_{t+1}-\x_t\|^2,
\end{align*}
where the last inequality uses Young's inequality $\langle \a, \b\rangle \leq \frac{1}{2}\|\a\|^2 + \frac{1}{2}\|\b\|^2$.
Then by rearranging above inequality and summing it across $t=0,\dots, T-1$, we have
\begin{align}\label{proxSG:ineq5}
\nonumber \frac{1-2\eta L}{2\eta}\sum_{t=0}^{T-1}\|\x_{t+1}- \x_t\|^2\leq&  F(\x_0) -  F(\x_{T}) + \frac{1}{2L}\sum_{t=0}^{T-1}\|\g_t-\nabla f(\x_t)\|^2\\
\nonumber  \leq & F(\x_0) -  F(\x_*) + \frac{1}{2L}\sum_{t=0}^{T-1}\|\g_t-\nabla f(\x_t)\|^2\\
   \leq & \Delta + \frac{1}{2L}\sum_{t=0}^{T-1}\|\g_t-\nabla f(\x_t)\|^2,
\end{align}
where the second inequality uses the fact that $F(\x_*) \leq F(\x)$ for any $\x\in\R^d$ and the last inequality uses the Assumption~\ref{ass:1} (iii).

On the other hand, by (\ref{proxSG:ineq4}) we get
%\begin{align*}
% &\langle \g_t-\nabla f(\x_{t+1}), \x_{t+1}-\x_t\rangle + \frac{1}{2}(1/\eta -L)\|\x_{t+1}- \x_t\|^2\\
%  \leq&  F(\x_t) -  F(\x_{t+1}) -  \langle \nabla f(\x_{t+1})-\nabla f(\x_t), \x_{t+1}-\x_t\rangle
%\end{align*}
%i.e.,
\begin{align}\label{proxSG:ineq6}
 \nonumber &\frac{2}{\eta}\langle \g_t-\nabla f(\x_{t+1}), \x_{t+1}-\x_t\rangle + \frac{1 -\eta L}{\eta^2}\|\x_{t+1}- \x_t\|^2\\
  \leq & \frac{2(F(\x_t) -  F(\x_{t+1}))}{\eta}-  \frac{2}{\eta} \langle \nabla f(\x_{t+1})-\nabla f(\x_t), \x_{t+1}-\x_t\rangle.
\end{align}
Since
%\begin{align*}
$2\langle \g_t-\nabla f(\x_{t+1}), \frac{1}{\eta}(\x_{t+1}-\x_t)\rangle 
 = \|\g_t-\nabla f(\x_{t+1}) + \frac{1}{\eta}(\x_{t+1}-\x_t)\|^2 - \|\g_t-\nabla f(\x_{t+1})\|^2 - \frac{1}{\eta^2}\|\x_{t+1}-\x_t\|^2$,
%\end{align*}
then plugging above inequality into (\ref{proxSG:ineq6}) and rearranging it we have
%\begin{align*}
% &\|\g_t-\nabla f(\x_{t+1}) + \frac{1}{\eta}(\x_{t+1}-\x_t)\|^2 - \|\g_t-\nabla f(\x_{t+1})\|^2 - \frac{1}{\eta^2}\|\x_{t+1}-\x_t\|^2\\
%  \leq & \frac{2(F(\x_t) -  F(\x_{t+1}))}{\eta}-  \frac{2}{\eta} \langle \nabla f(\x_{t+1})-\nabla f(\x_t), \x_{t+1}-\x_t\rangle- (1 -\eta L)\|\x_{t+1}- \x_t\|^2.
%\end{align*}
%That is
\begin{align*}
 &\|\g_t-\nabla f(\x_{t+1}) + \frac{1}{\eta}(\x_{t+1}-\x_t)\|^2 \\
 \leq& \|\g_t-\nabla f(\x_{t+1})\|^2 + \frac{1}{\eta^2}\|\x_{t+1}-\x_t\|^2 - \frac{1 -\eta L}{\eta^2}\|\x_{t+1}- \x_t\|^2\\
  &+\frac{2(F(\x_t) -  F(\x_{t+1}))}{\eta}-  \frac{2}{\eta} \langle \nabla f(\x_{t+1})-\nabla f(\x_t), \x_{t+1}-\x_t\rangle\\
%  =& \|\g_t-\nabla f(\x_{t})+\nabla f(\x_{t})-\nabla f(\x_{t+1})\|^2 + \frac{1}{\eta^2}\|\x_{t+1}-\x_t\|^2 \\
%  &- (1 -\eta L)\|\x_{t+1}- \x_t\|^2+\frac{2(F(\x_t) -  F(\x_{t+1}))}{\eta}-  \frac{2}{\eta} \langle \nabla f(\x_{t+1})-\nabla f(\x_t), \x_{t+1}-\x_t\rangle\\
  \leq&2 \|\g_t-\nabla f(\x_{t})\|^2+ 2\|\nabla f(\x_{t})-\nabla f(\x_{t+1})\|^2 + \frac{1}{\eta^2}\|\x_{t+1}-\x_t\|^2 \\
  &- \frac{1 -\eta L}{\eta^2}\|\x_{t+1}- \x_t\|^2+\frac{2(F(\x_t) -  F(\x_{t+1}))}{\eta}-  \frac{2}{\eta} \langle \nabla f(\x_{t+1})-\nabla f(\x_t), \x_{t+1}-\x_t\rangle\\
   \leq&2 \|\g_t-\nabla f(\x_{t})\|^2+ 2L^2\|\x_{t}-\x_{t+1}\|^2 + \frac{1}{\eta^2}\|\x_{t+1}-\x_t\|^2 \\
  &- \frac{1 -\eta L}{\eta^2}\|\x_{t+1}- \x_t\|^2+\frac{2(F(\x_t) -  F(\x_{t+1}))}{\eta} + \frac{2L}{\eta} \|\x_{t+1}-\x_t\|^2\\
   =&2 \|\g_t-\nabla f(\x_{t})\|^2+ \frac{2(F(\x_t) -  F(\x_{t+1}))}{\eta} + (2L^2 +\frac{3L}{\eta} )\|\x_{t+1}-\x_t\|^2,
\end{align*}
where the second inequality is due to Young's inequality $\|\a\pm\b\|^2 \leq 2\|\a\|^2 + 2\|\b\|^2$; the last inequality is due to the Assumption~\ref{ass:1} (iv) of $\|\nabla f(\x) - \nabla f(\y)\|\leq L\|\x-\y\|$ for any $\x,\y\in\R^d$ and Cauchy-Schwartz inequality. %;the last inequality is due to $\eta L < 1$. 
By summing up $t=0, 1, \dots, T-1$, we have
\begin{align*}
 &\sum_{t=0}^{T-1}\|\g_t-\nabla f(\x_{t+1}) + \frac{1}{\eta}(\x_{t+1}-\x_t)\|^2 \\
 \leq&2\sum_{t=0}^{T-1}\|\g_t-\nabla f(\x_{t})\|^2+\frac{2(F(\x_0) -  F(\x_{T}))}{\eta}+(2L^2 +\frac{3L}{\eta} )\sum_{t=0}^{T-1}\|\x_{t}-\x_{t+1}\|^2 \\
 \leq&2 \sum_{t=0}^{T-1}\|\g_t-\nabla f(\x_{t})\|^2+\frac{2(F(\x_0) -  F(\x_*))}{\eta}+(2L^2 +\frac{3L}{\eta} )\sum_{t=0}^{T-1}\|\x_{t}-\x_{t+1}\|^2\\
 \leq&2\sum_{t=0}^{T-1}\|\g_t-\nabla f(\x_{t})\|^2+\frac{2\Delta}{\eta}+\frac{2}{\eta^2}\sum_{t=0}^{T-1}\|\x_{t}-\x_{t+1}\|^2,
\end{align*}
where the second inequality is due to $F(\x_*) \leq F(\x_T)$; the last inequality holds by setting $\eta = \frac{c}{L} < \frac{1}{2L}$ and Assumption~\ref{ass:1}(iii) of $F(\x_0)-F(\x_*)\leq\Delta$.
Combining above inequality with (\ref{proxSG:ineq1}) and (\ref{proxSG:ineq5}) and taking the expectation, we have
\begin{align*}
&\E_R[\text{dist}(0,\hat \partial F(\x_{R}))^2] \\
\leq & \frac{1}{T} \sum_{t=0}^{T-1}\E[\|\g_t-\nabla f(\x_{t+1}) + \frac{1}{\eta}(\x_{t+1}-\x_t)\|^2]\\
\leq & \frac{2}{T} \sum_{t=0}^{T-1}\E[\|\g_t-\nabla f(\x_{t})\|^2]+\frac{2\Delta}{\eta T}
+\frac{2}{\eta^2T}\left(\frac{2}{1/\eta -2L}\Delta + \frac{1}{L/\eta -2L^2}\sum_{t=0}^{T-1}\E[\|\g_t-\nabla f(\x_t)\|^2]\right)\\
%= & \frac{2}{T} \sum_{t=0}^{T-1}\|\g_t-\nabla f(\x_{t})\|^2+\frac{2\Delta}{\eta T}
%+\frac{37}{25\eta T}\left(\frac{2}{1 -5\eta L}\Delta + \frac{4}{L(1 -5\eta L)}\sum_{t=0}^{T-1}\|\g_t-\nabla f(\x_t)\|^2\right)\\
%= & \frac{2}{T} \sum_{t=0}^{T-1}\|\g_t-\nabla f(\x_{t})\|^2+\frac{2\Delta}{\eta T}
%+\frac{37}{25\eta T}\left(\frac{2}{1 -5c}\Delta + \frac{4}{L(1 -5c)}\sum_{t=0}^{T-1}\|\g_t-\nabla f(\x_t)\|^2\right)\\
%= & (2+ \frac{148}{25c(1 -5c)})\frac{1}{T}\sum_{t=0}^{T-1}\|\g_t-\nabla f(\x_{t})\|^2+ (\frac{74}{25-125c}+2)\frac{\Delta}{\eta T}\\
= & \frac{2c(1 -2c)+2}{c(1-2c)}\frac{1}{T}\sum_{t=0}^{T-1}\E[\|\g_t-\nabla f(\x_{t})\|^2]+ \frac{6-4c}{1-2c}\frac{\Delta}{\eta T},
\end{align*}
where $0<c<\frac{1}{2}$.
\end{proof}

Next, we present two corollaries by using  a fixed mini-batch size and increasing mini-batch sizes. 
\begin{cor}[Fixed mini-batch size]\label{cor:proxSG:2}
Suppose Assumption~\ref{ass:1} holds, run MB-SPG (Algorithm~\ref{alg:1}) with $\eta = \frac{c}{L}$ ($0<c<\frac{1}{2}$),  $T =2c_2\Delta/(\eta\epsilon^2)$ and  a fixed mini-batch size $m_t = 2c_1\sigma^2/\epsilon^2$ for $t = 0,\dots, T-1$,  then the output $\x_R$ of Algorithm~\ref{alg:1} satisfies
\begin{align*}
\E[\text{dist}(0,\hat\partial F(\x_{R}))^2]\leq \epsilon^2, %$\frac{c_1\sigma^2}{bT} +\frac{c_2\Delta}{\eta T}\leq \epsilon^2$, 
\end{align*}
where $c_1, c_2$  are two positive constants as in Theorem~\ref{thm:proxSG}. In particular in order to have $\E[\text{dist}(0,\hat\partial F(\x_{R}))]\leq\epsilon$, it suffices to set $T =  O(1/\epsilon^2)$. The total complexity is $O(1/\epsilon^{4})$.
\end{cor}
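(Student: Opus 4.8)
The plan is to obtain the corollary as a direct specialization of Theorem~\ref{thm:proxSG}, the only real work being to convert the generic variance term into a bound expressed through the mini-batch size. I would start from the inequality supplied by Theorem~\ref{thm:proxSG},
$$\E[\text{dist}(0,\hat\partial F(\x_R))^2]\leq \frac{c_1}{T}\sum_{t=0}^{T-1}\E[\|\g_t-\nabla f(\x_t)\|^2]+\frac{c_2\Delta}{\eta T},$$
and then reduce each summand on the right-hand side.

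The key step is the mini-batch variance bound. Conditioning on the history up to iteration $t$ (so that $\x_t$ is fixed), $\g_t=\frac{1}{m_t}\sum_{i=1}^{m_t}\nabla f(\x_t;\xi_i)$ is an average of $m_t$ independent estimators, each unbiased for $\nabla f(\x_t)$ and satisfying $\E[\|\nabla f(\x_t;\xi_i)-\nabla f(\x_t)\|^2]\leq\sigma^2$ by Assumption~\ref{ass:1}(i). The mean-zero cross terms vanish in expectation, so the standard variance-reduction identity gives $\E[\|\g_t-\nabla f(\x_t)\|^2\mid \x_t]\leq \sigma^2/m_t$, and the tower property yields the same bound for the unconditional expectation. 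Substituting the prescribed fixed size $m_t=2c_1\sigma^2/\epsilon^2$ makes each term at most $\epsilon^2/(2c_1)$, so the first term on the right collapses to $\epsilon^2/2$; substituting $T=2c_2\Delta/(\eta\epsilon^2)$ into the second term also gives $\epsilon^2/2$. Adding the two halves yields $\E[\text{dist}(0,\hat\partial F(\x_R))^2]\leq\epsilon^2$. For the non-squared statement I would invoke Jensen's inequality, $\E[\text{dist}(0,\hat\partial F(\x_R))]\leq\sqrt{\E[\text{dist}(0,\hat\partial F(\x_R))^2]}\leq\epsilon$, so the same choice $T=O(1/\epsilon^2)$ suffices; finally the total number of stochastic gradient evaluations is $\sum_{t=0}^{T-1}m_t=T\,m_t=O(\epsilon^{-2})\cdot O(\epsilon^{-2})=O(\epsilon^{-4})$.

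I do not anticipate a genuine obstacle here: the content beyond Theorem~\ref{thm:proxSG} is limited to the variance-reduction identity for an average of independent unbiased estimators, whose correctness hinges only on the independence of the samples drawn within each mini-batch $\S_t$ together with Assumption~\ref{ass:1}(i); every remaining step is an arithmetic substitution of the prescribed $T$ and $m_t$ into the bound of Theorem~\ref{thm:proxSG}, followed by a one-line Jensen argument for the first-moment guarantee.
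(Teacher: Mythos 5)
Your proposal is correct and follows exactly the route the paper intends: apply Theorem~\ref{thm:proxSG}, bound each variance term by $\sigma^2/m_t$ using the i.i.d.\ mini-batch averaging together with Assumption~\ref{ass:1}(i), substitute the prescribed $m_t$ and $T$ so each of the two terms contributes $\epsilon^2/2$, and finish with Jensen and the count $T\,m_t=O(\epsilon^{-4})$. No gaps.
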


\begin{cor}[Increasing mini-batch sizes]\label{cor:proxSG:1}
Suppose Assumption~\ref{ass:1} holds, run MB-SPG (Algorithm~\ref{alg:1}) with $\eta = \frac{c}{L}$ ($0<c<\frac{1}{2}$) and a sequence of mini-batch sizes $m_t = b(t+1)$ for $t = 0,\dots, T-1$, where $b>0$ is a constant, then the output $\x_R$ of Algorithm~\ref{alg:1} satisfies
\begin{align*}
\E[\text{dist}(0,\hat\partial F(\x_{R}))^2]\leq \frac{c_1\sigma^2 (\log(T)+1)}{bT} +\frac{c_2\Delta}{\eta T},
\end{align*}
where $c_1, c_2$ are constants as  in Theorem~\ref{thm:proxSG}.
In particular in order to have $\E[\text{dist}(0,\hat\partial F(\x_{R}))]\leq\epsilon$, it suffices to set $T = \widetilde O(1/\epsilon^2)$. The total complexity is $\widetilde O(1/\epsilon^{4})$.
\end{cor}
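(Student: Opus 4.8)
The plan is to reduce the statement directly to Theorem~\ref{thm:proxSG}, whose bound already isolates $\frac{1}{T}\sum_{t=0}^{T-1}\E[\|\g_t-\nabla f(\x_t)\|^2]$ as the only quantity left to control; everything else is the deterministic term $c_2\Delta/(\eta T)$. First I would bound the per-iteration variance of the mini-batch estimator $\g_t=\frac{1}{m_t}\sum_{i_t=1}^{m_t}\nabla f(\x_t;\xi_{i_t})$. Because the samples in $\S_t$ are drawn independently and each summand has variance at most $\sigma^2$ conditioned on $\x_t$ by Assumption~\ref{ass:1}(i), averaging $m_t$ of them gives $\E[\|\g_t-\nabla f(\x_t)\|^2]\leq \sigma^2/m_t$, and substituting the schedule $m_t=b(t+1)$ yields $\E[\|\g_t-\nabla f(\x_t)\|^2]\leq \sigma^2/(b(t+1))$.

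Next I would sum this across iterations and recognize a harmonic series:
\[
\sum_{t=0}^{T-1}\E[\|\g_t-\nabla f(\x_t)\|^2]\leq \frac{\sigma^2}{b}\sum_{t=0}^{T-1}\frac{1}{t+1}=\frac{\sigma^2}{b}\sum_{k=1}^{T}\frac{1}{k}\leq \frac{\sigma^2(\log(T)+1)}{b},
\]
where the final step uses the standard bound $\sum_{k=1}^{T}1/k\leq \log(T)+1$ on the $T$-th harmonic number. Dividing by $T$ and inserting into the conclusion of Theorem~\ref{thm:proxSG} reproduces exactly the claimed bound $c_1\sigma^2(\log(T)+1)/(bT)+c_2\Delta/(\eta T)$, with $c_1,c_2$ the same constants.

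For the complexity claims I would first pass from the squared distance to the distance by Jensen's inequality, so that $\E[\text{dist}(0,\hat\partial F(\x_R))]\leq\epsilon$ holds once the right-hand side is at most $\epsilon^2$. Forcing the deterministic term $c_2\Delta/(\eta T)\leq\epsilon^2/2$ requires $T=O(1/\epsilon^2)$, while the variance term $c_1\sigma^2(\log(T)+1)/(bT)\leq\epsilon^2/2$ only needs $T/\log T=O(1/\epsilon^2)$, i.e.\ $T=\widetilde O(1/\epsilon^2)$; the surviving logarithmic factor is precisely what turns the $O$ into $\widetilde O$. Finally the total number of stochastic gradient evaluations is $\sum_{t=0}^{T-1}m_t=b\,T(T+1)/2=O(bT^2)=\widetilde O(1/\epsilon^4)$.

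The argument is essentially mechanical, so there is no deep obstacle. The two points that need care are (i) the per-iteration variance bound, which uses that the mini-batch is freshly and independently sampled rather than merely of a prescribed size, and (ii) the complexity bookkeeping: since the batch sizes grow linearly, the total sample count grows quadratically as $\Theta(T^2)$, and the main thing to verify is that combining this with $T=\widetilde O(\epsilon^{-2})$ still collapses to the advertised $\widetilde O(\epsilon^{-4})$, matching the fixed-batch Corollary~\ref{cor:proxSG:2} while removing the need to supply $\epsilon$ as an input.
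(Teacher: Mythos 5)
Your proposal is correct and follows exactly the route the paper intends: apply Theorem~\ref{thm:proxSG}, bound the mini-batch variance by $\sigma^2/m_t = \sigma^2/(b(t+1))$ using the fresh i.i.d.\ samples, sum the harmonic series via $\sum_{k=1}^{T}1/k\leq\log(T)+1$, and account for the quadratically growing total sample count $\sum_t m_t = \Theta(bT^2)$. This is the same harmonic-sum device the paper uses explicitly in the proof of Theorem~\ref{thm:spider:sgd:imb}, so there is nothing to add.
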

{\bf Remark:} Although using increasing mini-batch sizes has an additional logarithmic factor in the complexity than that using a fixed mini-batch size, it would be more practical and user-friendly because it does not require knowing the target accuracy $\epsilon$ to run the algorithm . 

\section{Stochastic Proximal  Gradient Methods with SPIDER/SARAH}
In this section, we adopt the novel recursive stochastic gradient update framework to tackle the stochastic variance with a better complexity inspired by the SARAH and SPIDER algorithms. We present the detailed updates of the proposed algorithm in Algorithm~\ref{alg:2}, where the stochastic gradient estimate $\g_t$ is periodically updated by adding current stochastic gradient $\nabla f_{\S_2}(\x_t)$ and subtracting the past stochastic gradient $\nabla f_{\S_2}(\x_{t-1})$ from $\g_{t-1}$. To the best of our knowledge, this framework was firstly introduced in SARAH~\citep{nguyen2017sarah,nguyen2017stochastic} for solving convex/nonconvex smooth finite-sum problems with $r(\x)=0$. Another algorithm so-called SPIDER with same recursive framework was proposed in~\citep{fang2018spider} for solving non-convex smooth problems with $r(\x)=0$ both in finite-sum and online settings. One difference is that SPIDER uses normalized gradient update with step size $\eta=O(\epsilon/L)$. Recently, \cite{wang2018spiderboost} and \cite{pham2019proxsarah} respectively extended SPIDER and SARAH to their proximal versions for solving non-convex smooth problems with convex non-smooth regularizer $r(\x)$. By contrast, we consider more challenging problems in this paper, i.e., non-convex non-smooth regularized non-convex smooth problems. 
In particular, we use SARAH/SPIDER estimator to compute a variance-reduced stochastic gradient in the proposed algorithm, which is referred to as SPGA. 
%In particular, we use a recently proposed technique SPIDER to compute  a variance-reduced stochastic gradient.% with a better complexity. 
%We present the detailed updates of the proposed algorithm in Algorithm~\ref{alg:2}, which is referred to as SPA-SPG. 
\begin{algorithm}[t]
\caption{ Stochastic Proximal Gradient with SPIDER/SARAH: SPGA($\x_0$, $T$, $q$, $L$, $c$)}\label{alg:2}
\begin{algorithmic}[1]
\STATE \textbf{Input}:  $\x_0\in\R^d$, the number of iterations $T$, $\eta = \frac{c}{L}$ with $0<c<\frac{1}{6}$.
\FOR{$t=0,1,\ldots,T-1$}
\IF{$\text{mod}(t,q) == 0$}
	\STATE Draw samples $\S_1$, let $\g_t = \nabla f_{\S_1}(\x_t) $ // For finite-sum setting, $|\S_1|=n$
\ELSE
	\STATE  Draw samples $\S_2$, let $\g_t = \nabla f_{\S_2}(\x_t) - \nabla f_{\S_2}(\x_{t-1}) + \g_{t-1} $
\ENDIF
\STATE $\x_{t+1} \in\text{prox}_{\eta r}[\x_t - \eta \g_t]$
\ENDFOR
\STATE  \textbf{Output: } $\x_R$, where $R$ is uniformly sampled from $\{1, \dots, T\}$.
\end{algorithmic}
\end{algorithm}
In order to  use the SARAH/SPIDER technique to construct a variance-reduced stochastic gradient of $f$,  we need additional assumption, which is also used in previous studies~\citep{nguyen2017stochastic,fang2018spider,wang2018spiderboost,pham2019proxsarah}.
\begin{ass}\label{ass:1:add}
%Assume the following condition holds: 
Assume that every random function $f(\x;\xi)$ is smooth with a $L$-Lipchitz continuous gradient, i.e., it is differentiable and there exists a constant $L>0$ such that $\|\nabla f(\x;\xi)  - \nabla f(\y;\xi)\|\leq L\|\x - \y\| ,\forall\x, \y$.
\end{ass}

First, we present a general non-asymptotic convergence result of SPGA, which is summarized below. 
\begin{thm}\label{thm:spider:sgd}
Suppose Assumptions~\ref{ass:1} and~\ref{ass:1:add} hold, run Algorithm~\ref{alg:2} with $\eta = \frac{c}{L}$ ($0<c<\frac{1}{3}$) and $q = |\S_2|$, then the output $\x_R$ of Algorithm~\ref{alg:2} satisfies
\begin{align*}
\E[\text{dist}(0,\hat \partial F(\x_{R}))^2] \leq  \frac{2\theta\Delta + \gamma \eta\Delta}{\eta \theta T} + \frac{(\gamma + 4\theta L)\sigma^2}{2\theta L|\S_1|}
\end{align*}
for {\bf online setting} and 
\begin{align*}
\E[\text{dist}(0,\hat \partial F(\x_{R}))^2] \leq \frac{2\theta\Delta + \gamma \eta\Delta}{\eta \theta T}
\end{align*}
for {\bf finite-sum setting},
where $\gamma = 4L^2 + \frac{1}{\eta^2} +\frac{2L}{\eta}$ and $\theta = \frac{1-3\eta L}{2\eta}$ are two positive constants.
\end{thm}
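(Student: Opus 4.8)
The plan is to reuse the entire skeleton of the proof of Theorem~\ref{thm:proxSG}, since neither the stationarity bound nor the per-iteration descent inequality there depended on how the estimator $\g_t$ is built. Concretely, I would keep verbatim the optimality condition of the proximal step, which gives $\nabla f(\x_{t+1}) - \g_t - \frac{1}{\eta}(\x_{t+1}-\x_t) \in \hat\partial F(\x_{t+1})$ and hence $\text{dist}(0,\hat\partial F(\x_{t+1}))^2 \le \|\g_t - \nabla f(\x_{t+1}) + \frac{1}{\eta}(\x_{t+1}-\x_t)\|^2$, together with the descent inequality~(\ref{proxSG:ineq4}). Writing $V=\sum_{t=0}^{T-1}\E[\|\g_t-\nabla f(\x_t)\|^2]$ and $D=\sum_{t=0}^{T-1}\E[\|\x_{t+1}-\x_t\|^2]$, the only genuinely new task is to replace the independent mini-batch variance bound by a recursive bound on $V$, and then to close a feedback loop between $V$ and $D$. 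The target intermediate estimates are $V \le T\sigma^2/|\S_1| + L^2 D$ in the online setting and $V \le L^2 D$ in the finite-sum setting.

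For the recursion I would fix an epoch starting at $t_0$ (with $\text{mod}(t_0,q)=0$) and set $\delta_t = \nabla f_{\S_2}(\x_t)-\nabla f_{\S_2}(\x_{t-1}) - (\nabla f(\x_t)-\nabla f(\x_{t-1}))$, so that $\g_t-\nabla f(\x_t) = (\g_{t-1}-\nabla f(\x_{t-1})) + \delta_t$ at every non-restart step. Since $\x_t,\x_{t-1}$ are measurable with respect to the history $\F_{t-1}$ while $\S_2$ is drawn afresh, $\E[\delta_t\mid\F_{t-1}]=0$; the cross term therefore vanishes and $\E[\|\g_t-\nabla f(\x_t)\|^2\mid\F_{t-1}] = \|\g_{t-1}-\nabla f(\x_{t-1})\|^2 + \E[\|\delta_t\|^2\mid\F_{t-1}]$. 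Averaging over the $|\S_2|$ i.i.d.\ summands and invoking Assumption~\ref{ass:1:add} (per-sample smoothness, not merely smoothness of $f$) bounds $\E[\|\delta_t\|^2\mid\F_{t-1}] \le \frac{L^2}{|\S_2|}\|\x_t-\x_{t-1}\|^2$, which yields $\E[\|\g_t-\nabla f(\x_t)\|^2] \le \E[\|\g_{t-1}-\nabla f(\x_{t-1})\|^2] + \frac{L^2}{|\S_2|}\E[\|\x_t-\x_{t-1}\|^2]$. Unrolling this inside the epoch and summing its $q$ iterations, the key choice $q=|\S_2|$ turns $\frac{L^2}{|\S_2|}\cdot q$ into exactly $L^2$, giving the per-epoch bound $\sum_{t}\E[\|\g_t-\nabla f(\x_t)\|^2] \le q\,\E[\|\g_{t_0}-\nabla f(\x_{t_0})\|^2] + L^2\sum_{t}\E[\|\x_{t+1}-\x_t\|^2]$. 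At a restart $\g_{t_0}=\nabla f_{\S_1}(\x_{t_0})$, so the first term is at most $\sigma^2/|\S_1|$ per iteration online by Assumption~\ref{ass:1}(i), and exactly $0$ in the finite-sum case where $|\S_1|=n$ forces $\g_{t_0}=\nabla f(\x_{t_0})$. Summing over the $T/q$ epochs produces the claimed bounds on $V$.

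To finish, I would reuse~(\ref{proxSG:ineq4}) with Young's inequality to obtain $\frac{1}{2}(1/\eta-2L)D \le \Delta + \frac{1}{2L}V$, substitute the variance bound, and absorb the resulting $\frac{1}{2L}\cdot L^2 D = \frac{L}{2}D$ feedback into the left-hand side; this leaves $\theta D \le \Delta + \frac{T\sigma^2}{2L|\S_1|}$ with $\theta=\frac{1-3\eta L}{2\eta}$, which is positive precisely because $c<1/3$ (this is exactly where the threshold on $c$ comes from). In parallel, as in Theorem~\ref{thm:proxSG} I would expand $\|\g_t-\nabla f(\x_{t+1})+\frac{1}{\eta}(\x_{t+1}-\x_t)\|^2$ through the inner-product identity and the descent step so that a telescoping $\frac{2(F(\x_t)-F(\x_{t+1}))}{\eta}$ term survives, giving $\sum_t\E[\text{dist}(0,\hat\partial F(\x_{t+1}))^2] \le 2V + \frac{2\Delta}{\eta} + (2L^2+\frac{3L}{\eta})D$; relaxing $\frac{3L}{\eta}\le\frac{1}{\eta^2}+\frac{2L}{\eta}$ (valid since $\eta L<1$) rewrites the $D$-coefficient as $\gamma-2L^2$ with $\gamma=4L^2+\frac{1}{\eta^2}+\frac{2L}{\eta}$. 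Dividing by $T$, the $2V$ term contributes $2\sigma^2/|\S_1|$ plus a further $2L^2 D/T$ that merges with $(\gamma-2L^2)D/T$ into $\gamma D/T$; inserting $D\le\frac{1}{\theta}(\Delta+\frac{T\sigma^2}{2L|\S_1|})$ and collecting the $\Delta$ and $\sigma^2$ pieces yields the online bound, and the $\sigma^2$ contributions simply drop in the finite-sum case, leaving $\frac{2\theta\Delta+\gamma\eta\Delta}{\eta\theta T}$.

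The main obstacle, and the only part that is not a transcription of Theorem~\ref{thm:proxSG}, is the variance recursion. Setting up the martingale structure so that the cross term truly vanishes requires care: it needs the fresh draw of $\S_2$ to be independent of $\F_{t-1}$ and both iterates $\x_t,\x_{t-1}$ to be history-measurable, and it uses the \emph{per-sample} Lipschitz gradient of Assumption~\ref{ass:1:add} rather than smoothness of the average. The second delicate point is the cross-epoch accumulation: the double sum $\sum_t\sum_{s\le t}\E[\|\x_s-\x_{s-1}\|^2]$ generates a factor $q$ that must be cancelled exactly by $1/|\S_2|$, so that the feedback constant is precisely $L^2$; this is what lets the $\frac{L}{2}D$ term be swallowed by the descent and keeps the threshold at $c<1/3$ (equivalently $\theta>0$). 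Once these are in place, the rest reduces everything to $\theta$ and $\gamma$ by routine bookkeeping.
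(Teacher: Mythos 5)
Your proposal is correct and follows essentially the same route as the paper's proof: the same proximal optimality condition and descent inequality, the same variance recursion (which the paper imports as Lemma~1 of \citep{fang2018spider} rather than re-deriving via the martingale argument you sketch), the same telescoping within and across epochs with $q=|\S_2|$ cancelling the $1/|\S_2|$ factor, and the same feedback absorption yielding $\theta=\frac{1-3\eta L}{2\eta}$ and $\gamma=4L^2+\frac{1}{\eta^2}+\frac{2L}{\eta}$. The only cosmetic difference is that you derive the coefficient $2L^2+\frac{3L}{\eta}$ first and then relax it to $2L^2+\frac{1}{\eta^2}+\frac{2L}{\eta}$, whereas the paper keeps the looser coefficient from the start; both lead to the identical final bounds in the online and finite-sum settings.
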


Before starting the proof, we present the error bound of the SARAH/SPIDER estimator in the following lemma from~\citep{fang2018spider} that will be used in the proof.
\begin{lemma}[Lemma 1~\citep{fang2018spider}]\label{lem:aux:1}
Suppose that Assumptions~\ref{ass:1} and~\ref{ass:1:add} hold, then for any $t$ such that $(n_t - 1)q \leq t \leq n_tq - 1$ with $n_t = \lceil t/q\rceil$ in Algorithm~\ref{alg:2}, we have
\begin{align*}
\E[\|\g_{t} - \nabla f(\x_{t})\|^2] \leq \frac{L^2}{|\S_2|}\sum_{i=(n_{t}-1)q }^{t} \E[\|\x_{i+1} -\x_{i}\|^2] +\E[ \|\g_{(n_{t}-1)q} - \nabla f(\x_{(n_{t}-1)q})\|^2].
\end{align*}
\end{lemma}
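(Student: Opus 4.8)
The plan is to exploit the recursive (telescoping) structure of the SARAH/SPIDER estimator together with a martingale-difference decomposition of its error. Write $\delta_t := \g_t - \nabla f(\x_t)$ for the estimation error, and fix $t$ in the inner loop, so that $\g_t$ is produced by the recursive branch $\g_t = \nabla f_{\S_2}(\x_t) - \nabla f_{\S_2}(\x_{t-1}) + \g_{t-1}$. Subtracting $\nabla f(\x_t)$ from both sides and adding and subtracting $\nabla f(\x_{t-1})$, I would decompose $\delta_t = \zeta_t + \delta_{t-1}$, where $\zeta_t := \nabla f_{\S_2}(\x_t) - \nabla f_{\S_2}(\x_{t-1}) - (\nabla f(\x_t) - \nabla f(\x_{t-1}))$. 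The whole argument rests on recognizing $\zeta_t$ as a martingale-difference term: conditioning on the filtration $\F_{t-1}$ that determines $\x_{t-1}, \x_t, \g_{t-1}$ (hence $\delta_{t-1}$) but is independent of the fresh minibatch $\S_2$ drawn at step $t$, the unbiasedness $\E_{\S_2}[\nabla f_{\S_2}(\x)] = \nabla f(\x)$ from Assumption~\ref{ass:1}(i) gives $\E[\zeta_t \mid \F_{t-1}] = 0$.

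The second step is a one-step recursion in expectation. Since $\delta_{t-1}$ is $\F_{t-1}$-measurable while $\zeta_t$ is conditionally mean zero, the cross term vanishes and
\[
\E[\|\delta_t\|^2 \mid \F_{t-1}] = \E[\|\zeta_t\|^2 \mid \F_{t-1}] + \|\delta_{t-1}\|^2 .
\]
To control $\E[\|\zeta_t\|^2 \mid \F_{t-1}]$ I would note that $\zeta_t$ is an average of $|\S_2|$ conditionally i.i.d.\ mean-zero summands, so its conditional second moment equals $|\S_2|^{-1}$ times the per-sample second moment; bounding the centered second moment by the raw one ($\E\|X - \E X\|^2 \le \E\|X\|^2$) and invoking the per-realization $L$-smoothness of $f(\cdot;\xi)$ from Assumption~\ref{ass:1:add} yields $\E[\|\zeta_t\|^2 \mid \F_{t-1}] \le (L^2/|\S_2|)\|\x_t - \x_{t-1}\|^2$. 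Taking total expectations gives the one-step bound $\E[\|\delta_t\|^2] \le (L^2/|\S_2|)\,\E[\|\x_t-\x_{t-1}\|^2] + \E[\|\delta_{t-1}\|^2]$.

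Finally I would unroll this recursion from $t$ down to the most recent reset index $t_0 := (n_t-1)q$, where $\g_{t_0} = \nabla f_{\S_1}(\x_{t_0})$ serves as the base case. Telescoping the one-step inequality over $i = t_0+1,\dots,t$ gives
\[
\E[\|\delta_t\|^2] \le \frac{L^2}{|\S_2|}\sum_{i=t_0+1}^{t}\E[\|\x_i - \x_{i-1}\|^2] + \E[\|\delta_{t_0}\|^2],
\]
and reindexing the sum as $\sum_{i=t_0}^{t-1}\E[\|\x_{i+1}-\x_i\|^2]$ matches the claimed expression except that the stated bound runs the index up to $i=t$; since this only appends the nonnegative term $\E[\|\x_{t+1}-\x_t\|^2]$, the asserted inequality follows a fortiori.

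The part requiring the most care is the conditioning bookkeeping in the first two steps: one must ensure $\delta_{t-1}$ is measurable with respect to $\F_{t-1}$ while $\S_2$ is drawn fresh and independent, which is precisely what annihilates the cross term and makes the error accumulate only additively within a single inner loop rather than compounding multiplicatively. Everything else — the minibatch variance scaling and the smoothness bound — is routine once this decomposition is set up correctly.
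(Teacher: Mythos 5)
Your proof is correct and is essentially the standard argument for this lemma: the paper itself does not reprove it but imports it directly as Lemma~1 of \citep{fang2018spider}, whose proof is exactly your martingale-difference decomposition $\delta_t=\zeta_t+\delta_{t-1}$, vanishing cross term, per-sample $L$-smoothness variance bound scaled by $|\S_2|^{-1}$, and telescoping back to the reset index. Your observation that the stated sum's upper limit $i=t$ merely appends a nonnegative term is also right, so the claimed inequality follows a fortiori from your slightly tighter one.
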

\begin{proof}[Proof of Theorem~\ref{thm:spider:sgd}]
We first focus on the online setting.
Similar to the proof of Theorem~\ref{thm:proxSG} we have
\begin{align}\label{proxSG:spider:ineq1}
\nabla f(\x_{t+1}) -\g_t - \frac{1}{\eta }(\x_{t+1}- \x_t) \in \nabla f(\x_{t+1})  +  \hat \partial r(\x_{t+1}) = \hat \partial F(\x_{t+1}).
\end{align}
And we also have
\begin{align}\label{proxSG:spider:ineq4}
 \nonumber F(\x_{t+1})-F(\x_t)  \leq& -\langle \g_t -\nabla f(\x_t), \x_{t+1}-\x_t\rangle - \frac{1}{2}(1/\eta -L)\|\x_{t+1}- \x_t\|^2\\
\nonumber  \leq &  \frac{1}{2L}\|\g_t-\nabla f(\x_t)\|^2 + \frac{L}{2}\|\x_{t+1}-\x_t\|^2 - \frac{1}{2}(1/\eta -L)\|\x_{t+1}- \x_t\|^2\\
 = &  \frac{1}{2L}\|\g_t-\nabla f(\x_t)\|^2  - \frac{1}{2}(1/\eta -2L)\|\x_{t+1}- \x_t\|^2,
\end{align}
where the second inequality uses Young's inequality $\langle \a, \b\rangle \leq \frac{1}{2}\|\a\|^2 + \frac{1}{2}\|\b\|^2$.
By taking the expectation on both sides of above inequality, we get
\begin{align}\label{proxSG:spider:ineq5}
\E[F(\x_{t+1})]-\E[F(\x_t)]  \leq  \frac{1}{2L}\E[\|\g_t-\nabla f(\x_t)\|^2]  -  \frac{1-2\eta L}{2\eta}\E[\|\x_{t+1}- \x_t\|^2].
\end{align}
Next, we want to upper bound the variance term $\E[\|\g_t- \nabla f(\x_t)  \|^2]$ by using Lemma~1 of \citep{fang2018spider}.
In particular, by Lemma~\ref{lem:aux:1}, for any $t$ such that $(n_t - 1)q \leq t \leq n_tq - 1$ with $n_t = \lceil t/q\rceil$ in Algorithm~\ref{alg:2}, we have
\begin{align}\label{proxSG:spider:ineq6}
\E[\|\g_{t} - \nabla f(\x_{t})\|^2] \leq \frac{L^2}{|\S_2|}\sum_{i=(n_{t}-1)q }^{t} \E[\|\x_{i+1} -\x_{i}\|^2] +\E[ \|\g_{(n_{t}-1)q} - \nabla f(\x_{(n_{t}-1)q})\|^2].
\end{align}
Plugging inequality (\ref{proxSG:spider:ineq6}) into inequality (\ref{proxSG:spider:ineq5}),
\begin{align}\label{proxSG:spider:ineq7}
\nonumber &\E[F(\x_{t+1})]-\E[F(\x_t)]  \leq -  \frac{1-2\eta L}{2\eta}\E[\|\x_{t+1}- \x_t\|^2]\\
&+ \frac{1}{2L}\left( \frac{L^2}{|\S_2|}\sum_{i=(n_{t}-1)q }^{t} \E[\|\x_{i+1} -\x_{i}\|^2] +\E[ \|\g_{(n_{t}-1)q} - \nabla f(\x_{(n_{t}-1)q})\|^2] \right).
\end{align}
By the updates of Algorithm~\ref{alg:2}, under Assumption~\ref{ass:1} (ii) we have
\begin{align}\label{proxSG:spider:ineq8}
\E[ \|\g_{(n_{t}-1)q} - \nabla f(\x_{(n_{t}-1)q})\|^2] \leq \frac{\sigma^2}{|\S_1|}.
\end{align}
Then inequality (\ref{proxSG:spider:ineq7}) implies that
\begin{align}\label{proxSG:spider:ineq9}
\nonumber &\E[F(\x_{t+1})]-\E[F(\x_t)] \\
 \leq & \frac{L}{2|\S_2|}\sum_{i=(n_{t}-1)q }^{t} \E[\|\x_{i+1} -\x_{i}\|^2] +\frac{\sigma^2}{2L|\S_1|} -  \frac{1-2\eta L}{2\eta}\E[\|\x_{t+1}- \x_t\|^2].
\end{align}
For any $t$ such that $(n_t - 1)q \leq t  \leq n_tq - 1$, we take the telescoping sum of (\ref{proxSG:spider:ineq9}) over $t$ from $(n_t - 1)q$ to $t$. 
\begin{align*}
&\E[F(\x_{t+1})]-\E[F(\x_{(n_t-1)q}] \\
 \leq & \frac{L}{2|\S_2|}\sum_{j=(n_{t}-1)q }^{t} \sum_{i=(n_{t}-1)q }^{j} \E[\|\x_{i+1} -\x_{i}\|^2] +\sum_{j=(n_{t}-1)q }^{t}\frac{\sigma^2}{2L|\S_1|} -  \frac{1-2\eta L}{2\eta}\sum_{j=(n_{t}-1)q }^{t}\E[\|\x_{j+1}- \x_j\|^2]\\
  \leq & \frac{L}{2|\S_2|}\sum_{j=(n_{t}-1)q }^{t} \sum_{i=(n_{t}-1)q }^{t} \E[\|\x_{i+1} -\x_{i}\|^2] +\sum_{j=(n_{t}-1)q }^{t}\frac{\sigma^2}{2L|\S_1|} -  \frac{1-2\eta L}{2\eta}\sum_{j=(n_{t}-1)q }^{t}\E[\|\x_{j+1}- \x_j\|^2]\\
  \leq & \frac{Lq}{2|\S_2|}\sum_{i=(n_{t}-1)q }^{t} \E[\|\x_{i+1} -\x_{i}\|^2] +\sum_{j=(n_{t}-1)q }^{t}\frac{\sigma^2}{2L|\S_1|} -  \frac{1-2\eta L}{2\eta}\sum_{j=(n_{t}-1)q }^{t}\E[\|\x_{j+1}- \x_j\|^2]\\
=  &\sum_{j=(n_{t}-1)q }^{t}\frac{\sigma^2}{2L|\S_1|} -  \theta \sum_{j=(n_{t}-1)q }^{t}\E[\|\x_{j+1}- \x_j\|^2],
\end{align*}
where the second inequality is due to $j\leq t$; the third inequality is due to $(n_t - 1)q \leq t  \leq n_tq - 1$; $\theta := \frac{1-2\eta L}{2\eta}- \frac{Lq}{2|\S_2|}$.
Therefore we have
\begin{align*}
\E[F(\x_{t+1})]-\E[F(\x_{(n_t-1)q}]  \leq  \sum_{j=(n_{t}-1)q }^{t}\frac{\sigma^2}{2L|\S_1|} -  \theta \sum_{j=(n_{t}-1)q }^{t}\E[\|\x_{j+1}- \x_j\|^2].
\end{align*}
Then
\begin{align*}
&\E[F(\x_{T})]-\E[F(\x_0)] \\
= & \E[F(\x_{T})]-\E[F(\x_{(n_T-1)q})] + \dots + \E[F(\x_{2q})]-\E[F(\x_q)] + \E[F(\x_{q})]-\E[F(\x_0] \\
 \leq & \sum_{j=0}^{T-1}\frac{\sigma^2}{2L|\S_1|} -  \theta \sum_{j=0}^{T-1}\E[\|\x_{j+1}- \x_j\|^2]\\
 = & \frac{\sigma^2T}{2L|\S_1|} -  \theta \sum_{t=0}^{T-1}\E[\|\x_{t+1}- \x_t\|^2].
\end{align*}
By the setting of $\eta$ such that $\theta>0$, therefore above inequality becomes
\begin{align}\label{proxSG:spider:ineq10}
\nonumber \frac{1}{T} \sum_{t=0}^{T-1}\E[\|\x_{t+1}- \x_t\|^2]  \leq &  \frac{\sigma^2}{2\theta L|\S_1|} + \frac{\E[F(\x_{0})]-\E[F(\x_T)]}{\theta T}\\
\nonumber \leq & \frac{\sigma^2}{2\theta L|\S_1|} + \frac{\E[F(\x_{0})]-\E[F(\x_*)]}{\theta T}\\
\leq & \frac{\sigma^2}{2\theta L|\S_1|} + \frac{\Delta}{\theta T},
\end{align}
where the second inequality is due to $F(\x_*) = \min_{\x\in\R^d}F(\x)$; the last inequality is due to Assumption~\ref{ass:1} (iii).

On the other hand, similar to the proof of Theorem~\ref{thm:proxSG} we also have
\begin{align*}
 &\|\g_t-\nabla f(\x_{t+1}) + \frac{1}{\eta}(\x_{t+1}-\x_t)\|^2 \\
\leq&2 \|\g_t-\nabla f(\x_{t})\|^2+ \frac{2(F(\x_t) -  F(\x_{t+1}))}{\eta} + (2L^2 + \frac{1}{\eta^2} +\frac{2L}{\eta} )\|\x_{t+1}-\x_t\|^2,
\end{align*}
By taking the expectation on both sides of above inequality, we get
\begin{align}\label{proxSG:spider:ineq11}
\nonumber &\E[\|\g_t-\nabla f(\x_{t+1}) + \frac{1}{\eta}(\x_{t+1}-\x_t)\|^2] \\
\leq&2 \E[\|\g_t-\nabla f(\x_{t})\|^2]+ \frac{2(\E[F(\x_t)] -  \E[F(\x_{t+1})])}{\eta} + (2L^2 + \frac{1}{\eta^2} +\frac{2L}{\eta} )\E[\|\x_{t+1}-\x_t\|^2],
\end{align}
Plugging inequality (\ref{proxSG:spider:ineq6}) into inequality (\ref{proxSG:spider:ineq11}),
\begin{align*}
&\E[\|\g_t-\nabla f(\x_{t+1}) + \frac{1}{\eta}(\x_{t+1}-\x_t)\|^2] \\
\leq&  \frac{2(\E[F(\x_t)] -  \E[F(\x_{t+1})])}{\eta} + (2L^2 + \frac{1}{\eta^2} +\frac{2L}{\eta} )\E[\|\x_{t+1}-\x_t\|^2]\\
&+ 2\left( \frac{L^2}{|\S_2|}\sum_{i=(n_{t}-1)q }^{t} \E[\|\x_{i+1} -\x_{i}\|^2] +\E[ \|\g_{(n_{t}-1)q} - \nabla f(\x_{(n_{t}-1)q})\|^2] \right).
\end{align*}
Therefore, we have
\begin{align}\label{proxSG:spider:ineq12}
\nonumber & \frac{2(\E[F(\x_{t+1})] -  \E[F(\x_{t})])}{\eta} + \E[\|\g_t-\nabla f(\x_{t+1}) + \frac{1}{\eta}(\x_{t+1}-\x_t)\|^2] \\
\leq& (2L^2 + \frac{1}{\eta^2} +\frac{2L}{\eta} )\E[\|\x_{t+1}-\x_t\|^2] + \frac{2L^2}{|\S_2|}\sum_{i=(n_{t}-1)q }^{t} \E[\|\x_{i+1} -\x_{i}\|^2] +\frac{2\sigma^2}{|\S_1|}.
\end{align}
For any $t$ such that $(n_t - 1)q \leq t  \leq n_tq - 1$, we take the telescoping sum of (\ref{proxSG:spider:ineq12}) over $t$ from $(n_t - 1)q$ to $t$. 
\begin{align*}
\nonumber & \frac{2(\E[F(\x_{t+1})] -  \E[F(\x_{(n_t-1)q})])}{\eta} + \sum_{j=(n_{t}-1)q }^{t}\E[\|\g_j-\nabla f(\x_{j+1}) + \frac{1}{\eta}(\x_{j+1}-\x_j)\|^2] \\
\leq& (2L^2 + \frac{1}{\eta^2} +\frac{2L}{\eta} )\sum_{j=(n_{t}-1)q }^{t}\E[\|\x_{j+1}- \x_j\|^2]+\sum_{j=(n_{t}-1)q }^{t}\frac{2\sigma^2}{|\S_1|} \\
&+ \frac{2L^2}{|\S_2|}\sum_{j=(n_{t}-1)q }^{t} \sum_{i=(n_{t}-1)q }^{j} \E[\|\x_{i+1} -\x_{i}\|^2]\\
\leq& (2L^2 + \frac{1}{\eta^2} +\frac{2L}{\eta} )\sum_{j=(n_{t}-1)q }^{t}\E[\|\x_{j+1}- \x_j\|^2]+\sum_{j=(n_{t}-1)q }^{t}\frac{2\sigma^2}{|\S_1|} \\
&+ \frac{2L^2}{|\S_2|}\sum_{j=(n_{t}-1)q }^{t} \sum_{i=(n_{t}-1)q }^{t} \E[\|\x_{i+1} -\x_{i}\|^2] \\
\leq& (2L^2 + \frac{1}{\eta^2} +\frac{2L}{\eta} )\sum_{j=(n_{t}-1)q }^{t}\E[\|\x_{j+1}- \x_j\|^2]+\sum_{j=(n_{t}-1)q }^{t}\frac{2\sigma^2}{|\S_1|} \\
&+ \frac{2qL^2}{|\S_2|}\sum_{i=(n_{t}-1)q }^{t} \E[\|\x_{i+1} -\x_{i}\|^2]\\
=& \gamma\sum_{j=(n_{t}-1)q }^{t}\E[\|\x_{j+1}- \x_j\|^2]+\sum_{j=(n_{t}-1)q }^{t}\frac{2\sigma^2}{|\S_1|}.
\end{align*}
where the second inequality is due to $j\leq t$; the third inequality is due to $(n_t - 1)q \leq t  \leq n_tq - 1$;  $\gamma = 2L^2 + \frac{1}{\eta^2} +\frac{2L}{\eta} + \frac{2L^2q}{|\S_2|}$. 
Therefore we have
\begin{align*}
\nonumber  \frac{2(\E[F(\x_{t+1})] -  \E[F(\x_{(n_t-1)q})])}{\eta} \leq &\gamma\sum_{j=(n_{t}-1)q }^{t}\E[\|\x_{j+1}- \x_j\|^2]+\sum_{j=(n_{t}-1)q }^{t}\frac{2\sigma^2}{|\S_1|}\\
& - \sum_{j=(n_{t}-1)q }^{t}\E[\|\g_j-\nabla f(\x_{j+1}) + \frac{1}{\eta}(\x_{j+1}-\x_j)\|^2].
\end{align*}
Then
\begin{align*}
&\frac{2}{\eta}(\E[F(\x_{T})]-\E[F(\x_0)])\\
= &\frac{2}{\eta}( \E[F(\x_{T})]-\E[F(\x_{(n_T-1)q})] + \dots + \E[F(\x_{2q})]-\E[F(\x_q)] + \E[F(\x_{q})]-\E[F(\x_0]) \\
 \leq & \gamma \sum_{j=0}^{T-1}\E[\|\x_{j+1}- \x_j\|^2] + \sum_{j=0}^{T-1}\frac{2\sigma^2}{|\S_1|} - \sum_{j=0}^{T-1}\E[\|\g_j-\nabla f(\x_{j+1}) + \frac{1}{\eta}(\x_{j+1}-\x_j)\|^2].
 \end{align*}
Dividing by $T$ on both sides of above inequality and rearranging it we have
\begin{align}\label{proxSG:spider:ineq13}
\nonumber & \frac{1}{T} \sum_{t=0}^{T-1}\E[\|\g_t-\nabla f(\x_{t+1}) + \frac{1}{\eta}(\x_{t+1}-\x_t)\|^2] \\
\nonumber  \leq&  \frac{2(\E[F(\x_0)] -  \E[F(\x_{T})])}{\eta T} + \gamma \frac{1}{T} \sum_{t=0}^{T-1} \E[\|\x_{t+1}-\x_t\|^2] + \frac{2\sigma^2}{|\S_1|}\\
\leq&  \frac{2\Delta}{\eta T} + \gamma \frac{1}{T} \sum_{t=0}^{T-1} \E[\|\x_{t+1}-\x_t\|^2] + \frac{2\sigma^2}{|\S_1|}.
\end{align}
Combining above inequality with (\ref{proxSG:spider:ineq1}) and (\ref{proxSG:spider:ineq10}) and taking the expectation, we have
\begin{align*}
&\E_R[\text{dist}(0,\hat \partial F(\x_{R}))^2] \\
=& \frac{1}{T} \sum_{t=0}^{T-1}\E[\|\g_t-\nabla f(\x_{t+1}) + \frac{1}{\eta}(\x_{t+1}-\x_t)\|^2]\\
\leq & \frac{2\Delta}{\eta T} + \gamma \frac{1}{T} \sum_{t=0}^{T-1} \E[\|\x_{t+1}-\x_t\|^2] + \frac{2\sigma^2}{|\S_1|}\\
\leq &\frac{2\Delta}{\eta T} + \gamma\left( \frac{\sigma^2}{2\theta L|\S_1|} + \frac{\Delta}{\theta T} \right) + \frac{2\sigma^2}{|\S_1|}\\
= &\frac{2\theta\Delta + \gamma \eta\Delta}{\eta \theta T} + \frac{(\gamma + 4\theta L)\sigma^2}{2\theta L|\S_1|},
\end{align*}
where $\gamma = 2L^2 + \frac{1}{\eta^2} +\frac{2L}{\eta} + \frac{2L^2q}{|\S_2|} $, and $\theta = \frac{1-2\eta L}{2\eta}- \frac{Lq}{2|\S_2|}$.
Since $q = |\S_2|$ and $\eta = \frac{c}{L}$ with $0<c<\frac{1}{3}$, then $\theta = \frac{1-3\eta L}{2\eta}>0$ 
and $\gamma = 4L^2 + \frac{1}{\eta^2} +\frac{2L}{\eta}$.

~~

For the finite-sum setting, the proof can be obtained by a slight change in above analysis using the fact that 
\begin{align*}
\E[ \|\g_{(n_{t}-1)q} - \nabla f(\x_{(n_{t}-1)q})\|^2] = 0.
\end{align*}
Then Lemma~\ref{lem:aux:1} will give us
\begin{align*}
\E[\|\g_{t} - \nabla f(\x_{t})\|^2] \leq \frac{L^2}{|\S_2|}\sum_{i=(n_{t}-1)q }^{t} \E[\|\x_{i+1} -\x_{i}\|^2].
\end{align*}
Following the similar analysis, we will have
\begin{align*}
\E_R[\text{dist}(0,\hat \partial F(\x_{R}))^2]
\leq \frac{2\theta\Delta + \gamma \eta\Delta}{\eta \theta T}.
\end{align*}
\end{proof}

Although the SARAH/SPIDER updates used in Algorithm~\ref{alg:2} is similar to that used in~\citep{wang2018spiderboost,pham2019proxsarah} for handling convex regularizers, our analysis has some key differences from that in~\citep{wang2018spiderboost,pham2019proxsarah}. In particular, the analysis  in~\citep{wang2018spiderboost,pham2019proxsarah} heavily relies on the convexity of the regularizer. In addition, they proved the convergence of the proximal gradient defined as $\mathcal G_\eta(\x)=\frac{1}{\eta}(\x - \text{prox}_{\eta r}(\x - \eta \nabla f(\x)))$, while we directly prove the convergence of the subgradient $\hat\partial F(\x)$. The convergence of the proximal gradient only implies a weak convergence of subgradient (i.e., a solution $\x$ which satisfies $\|\mathcal G_\eta(\x)\|\leq \epsilon$ indicates that it is close to a solution $\x^+= \text{prox}_{\eta r}(\x - \eta \nabla f(\x))$ such that $\|\hat\partial F(\x^+)\|\leq O(\epsilon)$ when $\eta = \Theta(1/L)$). The following corollary summarize results in the two settings and its proof can be found in the supplement.
\begin{cor}\label{cor:spider:sgd}
Under the same conditions and notations as in Theorem~\ref{thm:spider:sgd}, in order to have $\E[\text{dist}(0,\hat \partial F(\x_{R}))] \leq  \epsilon$ we can set:
\begin{itemize}[leftmargin=*]
\item{\emph{({\bf Online setting})}  $q = |\S_2| = \sqrt{|\S_1|}$, $|\S_1| = \frac{(\gamma + 4\theta L)\sigma^2}{\theta L\epsilon^2}$, 
and $T = \frac{2(2\theta+ \gamma \eta)\Delta}{\eta \theta \epsilon^2}$, giving a total complexity of $ O(\epsilon^{-3})$.
}
\item{\emph{({\bf Finite-sum setting})} $q = |\S_2| = \sqrt{n}$, $|\S_1| = n$, 
and $T = \frac{(2\theta+ \gamma \eta)\Delta}{\eta \theta \epsilon^2}$, leading to a total complexity of $O(\sqrt{n}\epsilon^{-2} + n)$.
}
\end{itemize}
\end{cor}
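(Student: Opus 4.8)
The plan is to convert the second-moment guarantees of Theorem~\ref{thm:spider:sgd} into the desired bound on $\E[\text{dist}(0,\hat\partial F(\x_R))]$ and then count stochastic-gradient evaluations. First, since $\text{dist}(0,\hat\partial F(\x_R))\geq 0$, Jensen's inequality gives $\E[\text{dist}(0,\hat\partial F(\x_R))]\leq\sqrt{\E[\text{dist}(0,\hat\partial F(\x_R))^2]}$, so it suffices to drive the right-hand side of each bound in Theorem~\ref{thm:spider:sgd} below $\epsilon^2$.

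For the online setting the bound is a sum of two terms, and I would force each to equal $\epsilon^2/2$. Setting $|\S_1|=\frac{(\gamma+4\theta L)\sigma^2}{\theta L\epsilon^2}$ makes the variance term $\frac{(\gamma+4\theta L)\sigma^2}{2\theta L|\S_1|}=\frac{\epsilon^2}{2}$, and setting $T=\frac{2(2\theta+\gamma\eta)\Delta}{\eta\theta\epsilon^2}$ makes $\frac{2\theta\Delta+\gamma\eta\Delta}{\eta\theta T}=\frac{\epsilon^2}{2}$, so their sum is exactly $\epsilon^2$. For the finite-sum setting only the first term is present, so $T=\frac{(2\theta+\gamma\eta)\Delta}{\eta\theta\epsilon^2}$ already yields $\E[\text{dist}^2]\leq\epsilon^2$, provided $|\S_1|=n$ is a true full gradient so that the anchor variance vanishes as Theorem~\ref{thm:spider:sgd} requires in the finite-sum case.

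Next I would compute the stochastic-gradient complexity. Each epoch of length $q$ costs $|\S_1|$ evaluations for the anchor and $2|\S_2|$ per inner step, so over $T$ iterations the total is of order $\lceil T/q\rceil\,|\S_1|+2T|\S_2|$. Because $\eta=c/L$ with $c$ fixed, one checks $\gamma=\Theta(L^2)$ and $\theta=\Theta(L)$, whence $|\S_1|=O(\sigma^2/\epsilon^2)$ and $T=O(L\Delta/\epsilon^2)=O(\epsilon^{-2})$. Choosing $q=|\S_2|=\sqrt{|\S_1|}=O(\sigma/\epsilon)$ balances the two cost terms, $\frac{T}{q}|\S_1|=O(\sigma\epsilon^{-3})$ and $2T|\S_2|=O(\sigma\epsilon^{-3})$, for a total of $O(\epsilon^{-3})$. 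In the finite-sum case $|\S_1|=n$, $q=|\S_2|=\sqrt n$, and $T=O(\epsilon^{-2})$ give $\frac{T}{q}|\S_1|=O(\sqrt n\,\epsilon^{-2})$ and $2T|\S_2|=O(\sqrt n\,\epsilon^{-2})$; together with the unavoidable single full-gradient pass of cost $n$ (dominant when $T<\sqrt n$) the total is $O(\sqrt n\,\epsilon^{-2}+n)$.

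The substitutions in the first two paragraphs are routine algebra; the step requiring care is the complexity accounting. Specifically, I would verify that $q=|\S_2|=\sqrt{|\S_1|}$ is precisely the choice that equalizes the anchor-batch cost $\frac{T}{q}|\S_1|$ and the inner-loop cost $T|\S_2|$, and that the additive $n$ in the finite-sum bound is genuinely needed to cover the regime where only one epoch is run. Tracking the hidden constants $\gamma$ and $\theta$ through $|\S_1|$ and $T$ so that they collapse to pure powers of $\epsilon$ and $n$ is where small errors could creep in, but conceptually this is just balancing two monomials.
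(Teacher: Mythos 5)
Your proposal is correct and follows essentially the same route as the paper: substitute the stated choices of $|\S_1|$, $|\S_2|=q$, and $T$ into the bounds of Theorem~\ref{thm:spider:sgd} so that the right-hand side equals $\epsilon^2$ (Jensen then gives the first-moment bound), and count the gradient cost as $|\S_2|T+|\S_1|\lceil T/q\rceil$, with $q=\sqrt{|\S_1|}$ balancing the two terms. Your extra factor of $2$ on the inner-loop cost and the explicit justification of the additive $n$ are harmless refinements of the paper's identical accounting.
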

{\bf Remark: } It is notable that the above complexity result is near-optimal according to~\citep{fang2018spider,zhou2019lower} for the finite-sum setting. For same special cases of $r(\x)$, similar complexities have been established when $r(\x)=0$~\citep{fang2018spider,zhou2018stochastic} or when $r(\x)$ is convex~\citep{wang2018spiderboost, pham2019proxsarah}.

\subsection{SPGR with Increasing Mini-Batch Sizes}\label{subsec:SPGAimb}
\begin{algorithm}[t]
\caption{SPGR with Increasing Mini-Batch sizes: SPGR-imb}\label{alg:2:imb}
\begin{algorithmic}[1]
\STATE \textbf{Initialize}:  $\x_0\in\R^d$,  $\eta = \frac{c}{L}$ with $0<c<\frac{1}{6}$, $b\geq 1$
\STATE \textbf{Set}:  $t=0$, $\x_{-1} = \x_0$ %and the outer loop number $S$ satisfying $bS(S+1)/2 = T$
\FOR{$s=1,\ldots,S$}
	\STATE Draw samples $\S_{1,s}$, let $\g_t = \nabla f_{\S_{1,s}}(\x_t) $ \hfill{$\diamond$ $|\S_{1,s}|=b^2s^2$}
	\STATE $\x_{t+1} \in\text{prox}_{\eta r}[\x_t - \eta \g_t]$,  $t = t + 1$
	%\STATE  $t = t + 1$
\FOR{$q=1,\ldots,bs$}
	\STATE  Draw samples $\S_{2,s}$, let $\g_t = \nabla f_{\S_{2,s}}(\x_t) - \nabla f_{\S_{2,s}}(\x_{t-1}) + \g_{t-1} $  \hfill{$\diamond$ $|\S_{2,s}|=bs$}
	\STATE $\x_{t+1} \in\text{prox}_{\eta r}[\x_t - \eta \g_t]$,  $t = t + 1$
	%\IF{$t+1 == T$}
	%	\STATE  Break
	%\ELSE
		%\STATE  $t = t + 1$
	%\ENDIF
\ENDFOR
\ENDFOR
\STATE  \textbf{Output: } $\x_R$, where $R$ is uniformly sampled from $\{1, \dots, T\}$.
\end{algorithmic}
\end{algorithm}
\vspace*{-0.1in}

One limitation of SPGR for the online setting is that it requires knowing the target accuracy level $\epsilon$ in order to set $q$ and the sizes of $\S_1$ and $\S_2$, which makes it not practical.  An user will need to worry about what is the right value of $\epsilon$ for running the algorithm, as a small $\epsilon$ may waste at lot of computations and a relatively large $\epsilon$ may not lead to an accurate solution. To address this issue, we propose a practical variant of SPGR, namely SPGR-imb, which uses increasing mini-batch sizes. The detailed updates are presented in Algorithm~\ref{alg:2:imb}. The key idea is that we divide the whole progress into $S$ stages, and for each stage $s \in [S]$, the mini-batch sizes $|\S_1|$ and $|\S_2|$ are set to be proportional $s^2$ and $s$, respectively.  The insight of this design is similar to Algorithm~\ref{alg:1} with increasing mini-batch sizes, i.e., at earlier stages when the solution is far from a stationary solution we can tolerate a large variance in the stochastic gradient estimator and hence allow for a smaller mini-batch size. 
We summarize the non-asymptotic convergence result of SPGR-imb in the following theorem. 
%First, we present a general non-asymptotic convergence result of SPGA, which is summarized below. %We give the convergence result on the smooth function $f(\x)$ by using the SPIDER techniques~\citep{fang2018spider,wang2018spiderboost} to carefully tackle the variance term $\E[\|\g_t- \nabla f(\x_t)  \|^2]$.
\begin{thm}\label{thm:spider:sgd:imb}
Suppose Assumptions~\ref{ass:1} and~\ref{ass:1:add} hold, run Algorithm~\ref{alg:2:imb} with $\eta = \frac{c}{L}$ ($0<c<\frac{1}{3}$) and $S$ satisfying $bS(S+1)/2 = T$, then the output $\x_R$ of Algorithm~\ref{alg:2:imb} satisfies 
$\E[\text{dist}(0,\hat \partial F(\x_{R}))^2] \leq 
\frac{(2\theta+\gamma\eta)\Delta}{\theta \eta T} +\frac{(4\theta L + \gamma)\sigma^2 (\log(2T/b)+2)}{4b\theta LT}$ for {\bf online setting}
and 
$\E[\text{dist}(0,\hat \partial F(\x_{R}))^2] \leq 
\frac{(2\theta+\gamma\eta)\Delta}{\theta \eta T}$ for {\bf finite-sum setting},
%%\begin{align*}
%$\E[\text{dist}(0,\hat \partial F(\x_{R}))^2] \leq 
%\left\{ \begin{array}{ll}
%\frac{(2\theta+\gamma\eta)\Delta}{\theta \eta T} +\frac{(4\theta L + \gamma)\sigma^2 (\log(2T/b)+2)}{4b\theta LT}&  \mbox{(online setting)},\\ 
%\frac{(2\theta+\gamma\eta)\Delta}{\theta \eta T} &\mbox{(finite-sum setting)},
%\end{array}\right.$
%%\end{align*}
where $\gamma = 4L^2 + \frac{1}{\eta^2} +\frac{2L}{\eta}$ and $\theta = \frac{1-3\eta L}{2\eta}$ are two positive constants.
In particular in order to have $\E[\text{dist}(0,\hat\partial F(\x_{R}))]\leq\epsilon$, it suffices to set $T = \widetilde O(1/\epsilon^2)$. The total complexity is $\widetilde O(1/\epsilon^{3})$.
\end{thm}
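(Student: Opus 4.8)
The plan is to follow the proof of Theorem~\ref{thm:spider:sgd} almost verbatim at the level of a single epoch, and then aggregate the epoch-level bounds across the $S$ stages while carefully tracking the stage-dependent batch sizes. The crucial structural observation that makes this work cleanly is that in stage $s$ the number of inner recursive steps equals the inner batch size, $q_s = |\S_{2,s}| = bs$. Consequently the two stage-dependent quantities arising in the proof of Theorem~\ref{thm:spider:sgd}, namely $\theta_s = \frac{1-2\eta L}{2\eta} - \frac{Lq_s}{2|\S_{2,s}|}$ and $\gamma_s = 2L^2 + \frac{1}{\eta^2} + \frac{2L}{\eta} + \frac{2L^2 q_s}{|\S_{2,s}|}$, collapse to the fixed constants $\theta = \frac{1-3\eta L}{2\eta}$ and $\gamma = 4L^2 + \frac{1}{\eta^2} + \frac{2L}{\eta}$ for every stage. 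Thus the per-stage recursion and the Young/smoothness manipulations are identical to those in Theorem~\ref{thm:spider:sgd}; only the reset-variance term $\E[\|\g_{(n_t-1)q} - \nabla f(\x_{(n_t-1)q})\|^2] \leq \sigma^2/|\S_{1,s}| = \sigma^2/(b^2 s^2)$ varies from stage to stage.

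First I would reproduce the subgradient identity (\ref{proxSG:spider:ineq1}) and the one-step descent inequality (\ref{proxSG:spider:ineq5}), both of which hold unchanged under Assumptions~\ref{ass:1} and~\ref{ass:1:add}. Then, applying Lemma~\ref{lem:aux:1} within stage $s$ and telescoping over its inner iterations exactly as in the derivation leading to (\ref{proxSG:spider:ineq10}), I obtain a per-stage bound whose reset-variance contribution is (steps in stage $s$) $\times \frac{\sigma^2}{2L|\S_{1,s}|} \approx bs \cdot \frac{\sigma^2}{2L b^2 s^2} = \frac{\sigma^2}{2Lbs}$. Summing the stage contributions telescopes the function values globally to $\Delta$ and turns the variance into the harmonic sum $\frac{\sigma^2}{2Lb}\sum_{s=1}^S \frac{1}{s} \leq \frac{\sigma^2}{2Lb}(\log S + 1)$, giving a movement bound of the form $\frac{1}{T}\sum_t \E[\|\x_{t+1}-\x_t\|^2] \leq \frac{\Delta}{\theta T} + \frac{\sigma^2(\log S+1)}{2\theta L b T}$. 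Running the same stagewise telescoping on the squared-subgradient surrogate $\E[\|\g_t - \nabla f(\x_{t+1}) + \frac{1}{\eta}(\x_{t+1}-\x_t)\|^2]$, as in (\ref{proxSG:spider:ineq13}), yields a companion bound in which the reset variance enters as $\sum_s bs\cdot\frac{2\sigma^2}{|\S_{1,s}|} = \frac{2\sigma^2}{b}\sum_s \frac{1}{s}$.

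Finally I would combine the two bounds through (\ref{proxSG:spider:ineq1}) and the uniform sampling of $R$, substitute the movement bound into the $\gamma$-weighted term, and use the relation $T = bS(S+1)/2$ (so $S \approx \sqrt{2T/b}$, $\log S \approx \frac{1}{2}\log(2T/b)$) to convert the harmonic factor into $\log(2T/b)+2$; collecting constants reproduces the claimed online bound. The finite-sum statement then follows exactly as at the end of the proof of Theorem~\ref{thm:spider:sgd}: when the reset gradient is exact the term $\E[\|\g_{(n_t-1)q} - \nabla f(\x_{(n_t-1)q})\|^2]$ vanishes, every harmonic-sum contribution disappears, and only $\frac{(2\theta+\gamma\eta)\Delta}{\theta\eta T}$ survives. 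The main obstacle is the bookkeeping in the double summation from Lemma~\ref{lem:aux:1} under stage-varying reset variance: one must verify that replacing the single global epoch of Theorem~\ref{thm:spider:sgd} by a sequence of epochs of growing length $bs$ leaves $\theta$ and $\gamma$ untouched (guaranteed precisely by $q_s=|\S_{2,s}|$) and that the weighted sum $\sum_s (\text{steps in stage } s)/|\S_{1,s}|$ is genuinely $O(\log S)$ rather than $O(1)$ or $O(S)$. The choices $|\S_{1,s}|=b^2s^2$ and $|\S_{2,s}|=bs$ are exactly what balance these competing effects.
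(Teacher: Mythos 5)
Your proposal is correct and follows essentially the same route as the paper's own proof: the identification $q_s=|\S_{2,s}|=bs$ keeping $\theta$ and $\gamma$ stage-independent, the per-stage application of Lemma~\ref{lem:aux:1} producing the reset-variance contribution $\sum_{s=1}^S \sigma^2/(bs)$, the harmonic-sum bound $\log S+1$ with $S\le\sqrt{2T/b}$, and the vanishing of all variance terms in the finite-sum case are exactly the steps the paper takes. The only cosmetic difference is that the paper averages the one-step descent inequality over all $T$ iterations and then splits the variance sum into stage blocks, rather than telescoping stage by stage; the computation is identical.
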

{\bf Remark: } Compared to the result in Corollary~\ref{cor:spider:sgd}, the complexity result of Theorem~\ref{thm:spider:sgd:imb} is only worse by a logarithmic factor.

\section{Experiments}
 \begin{figure}[t] 
\centering
    {\includegraphics[scale=0.27]{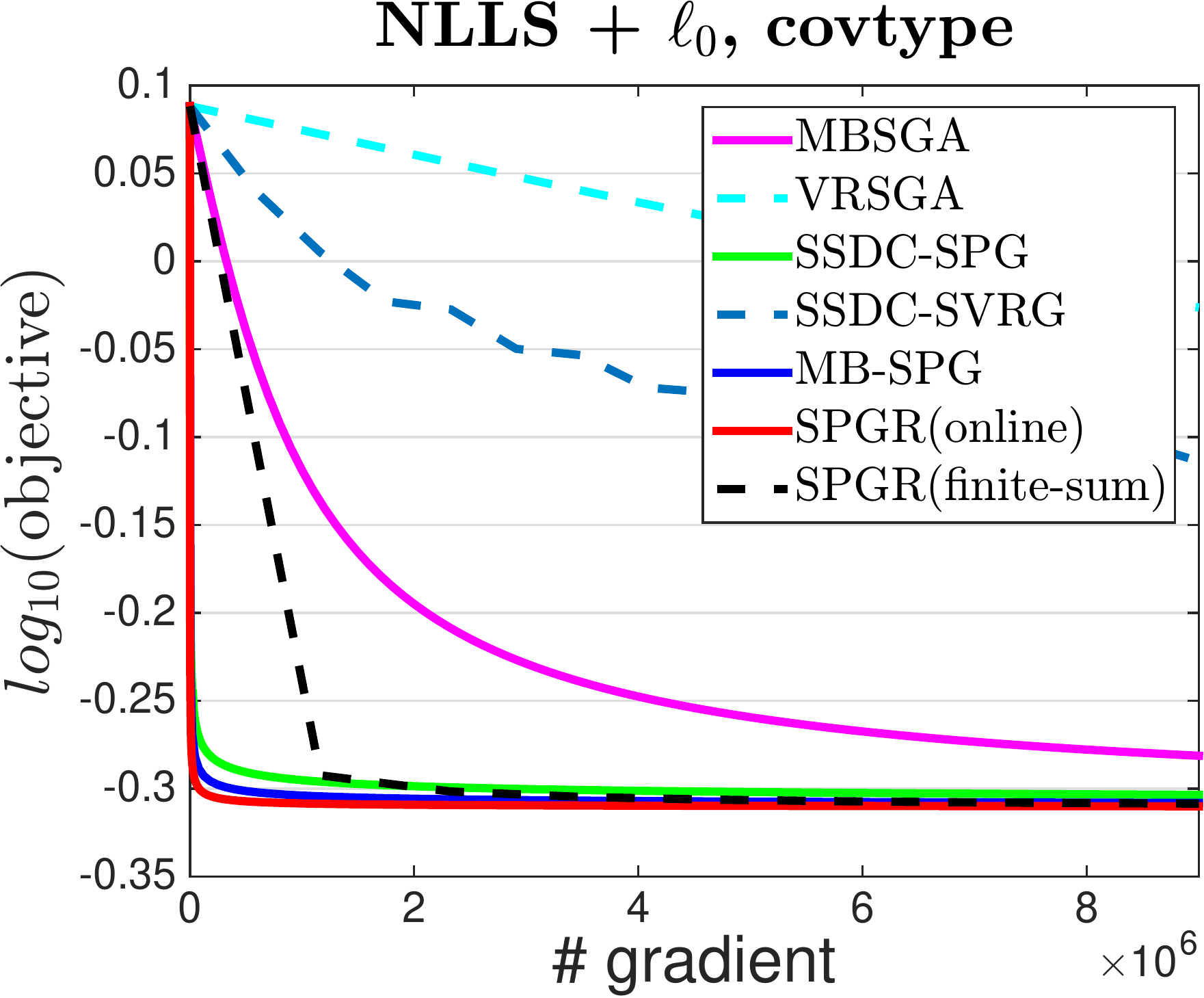}}   
        {\includegraphics[scale=0.27]{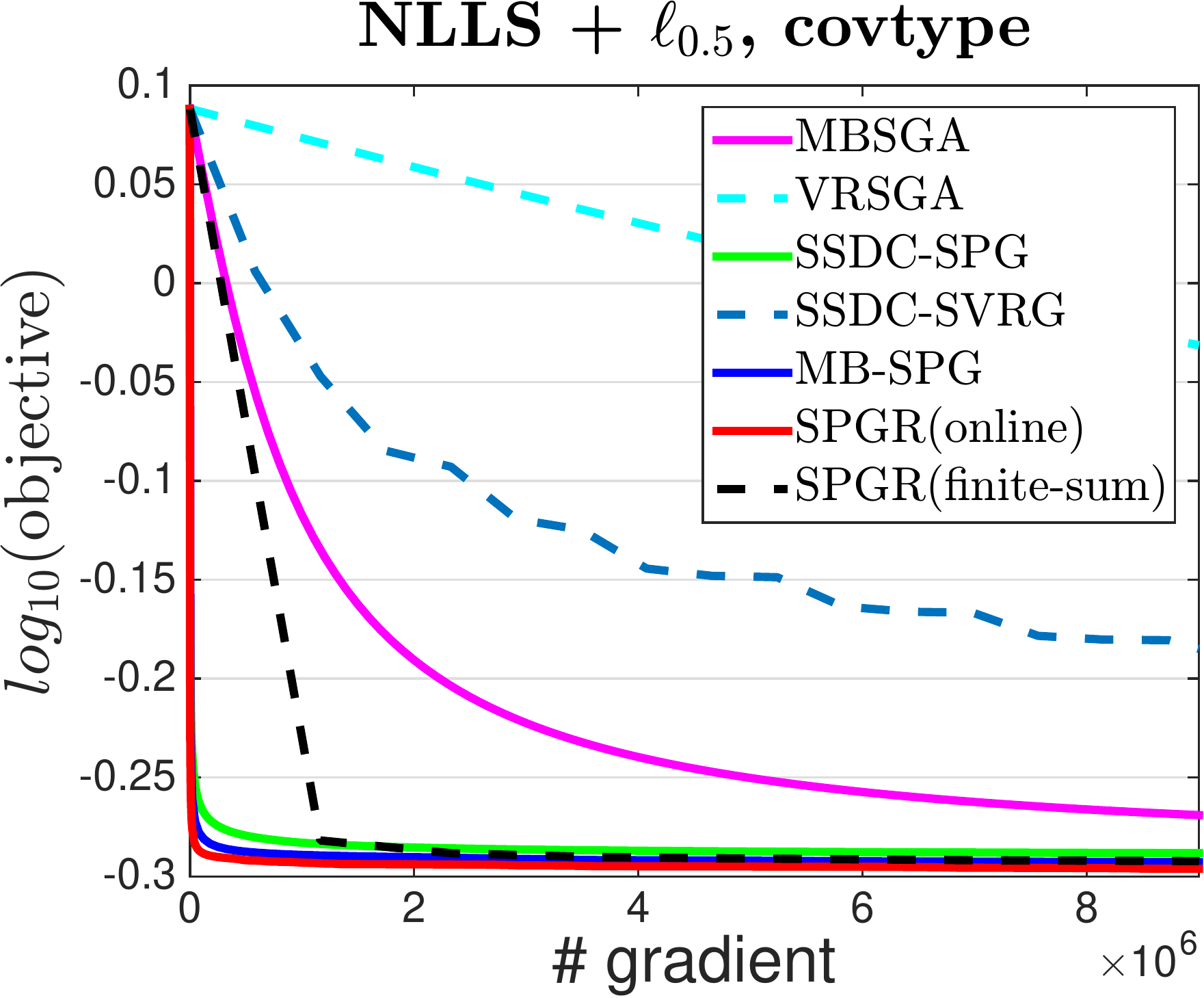}}   
            {\includegraphics[scale=0.27]{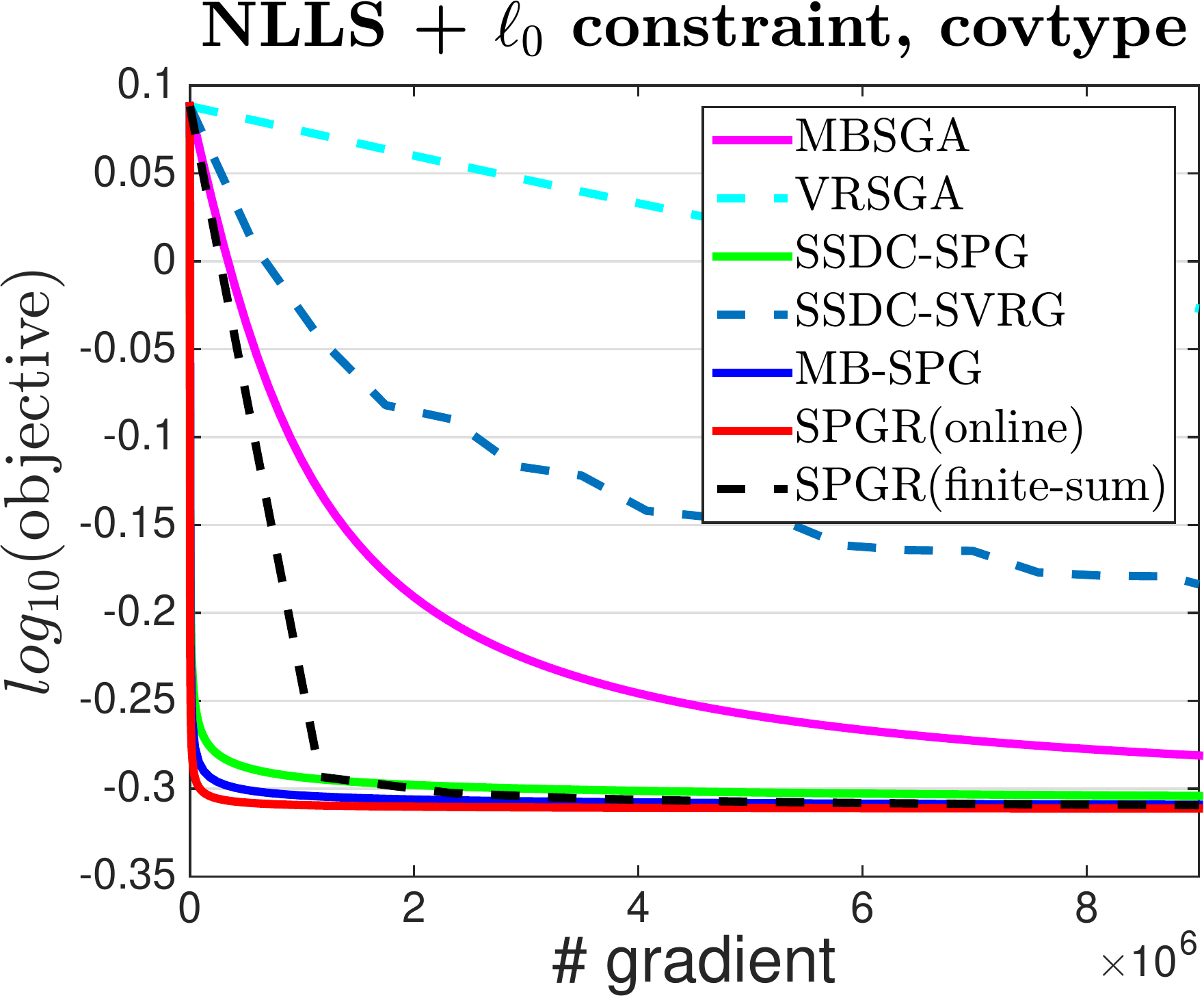}}   
    {\includegraphics[scale=0.27]{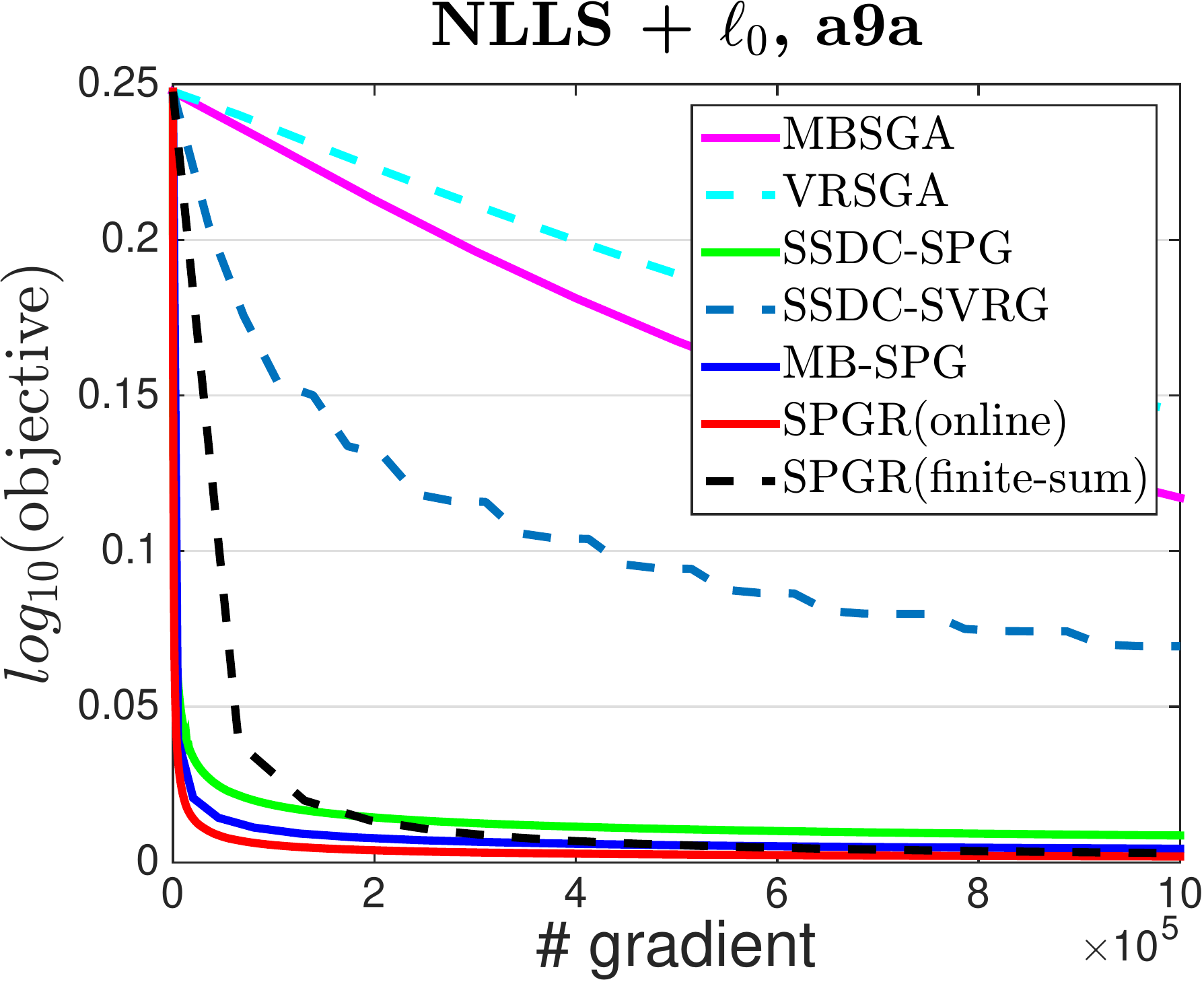}}   
            {\includegraphics[scale=0.27]{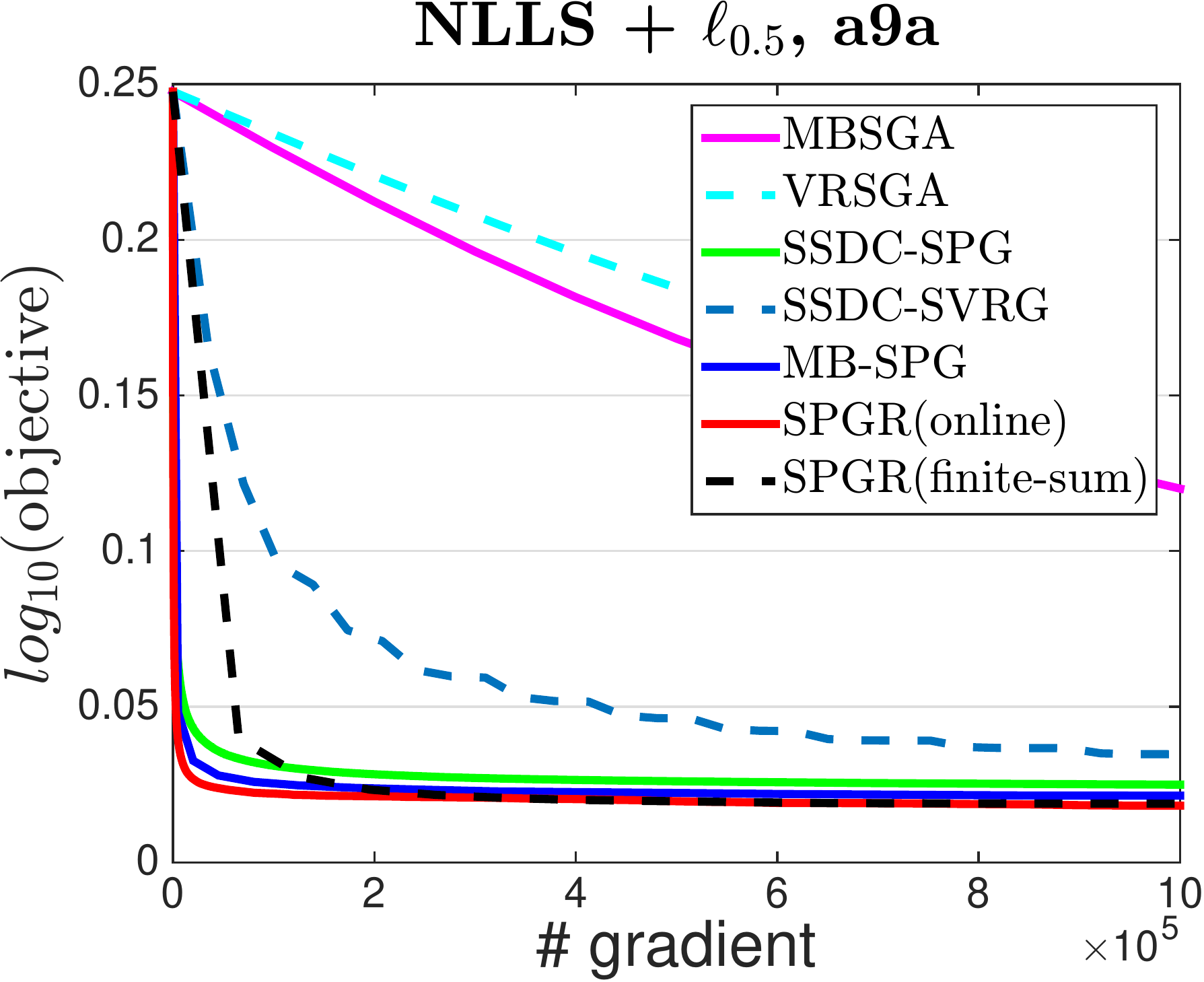}}
             {\includegraphics[scale=0.27]{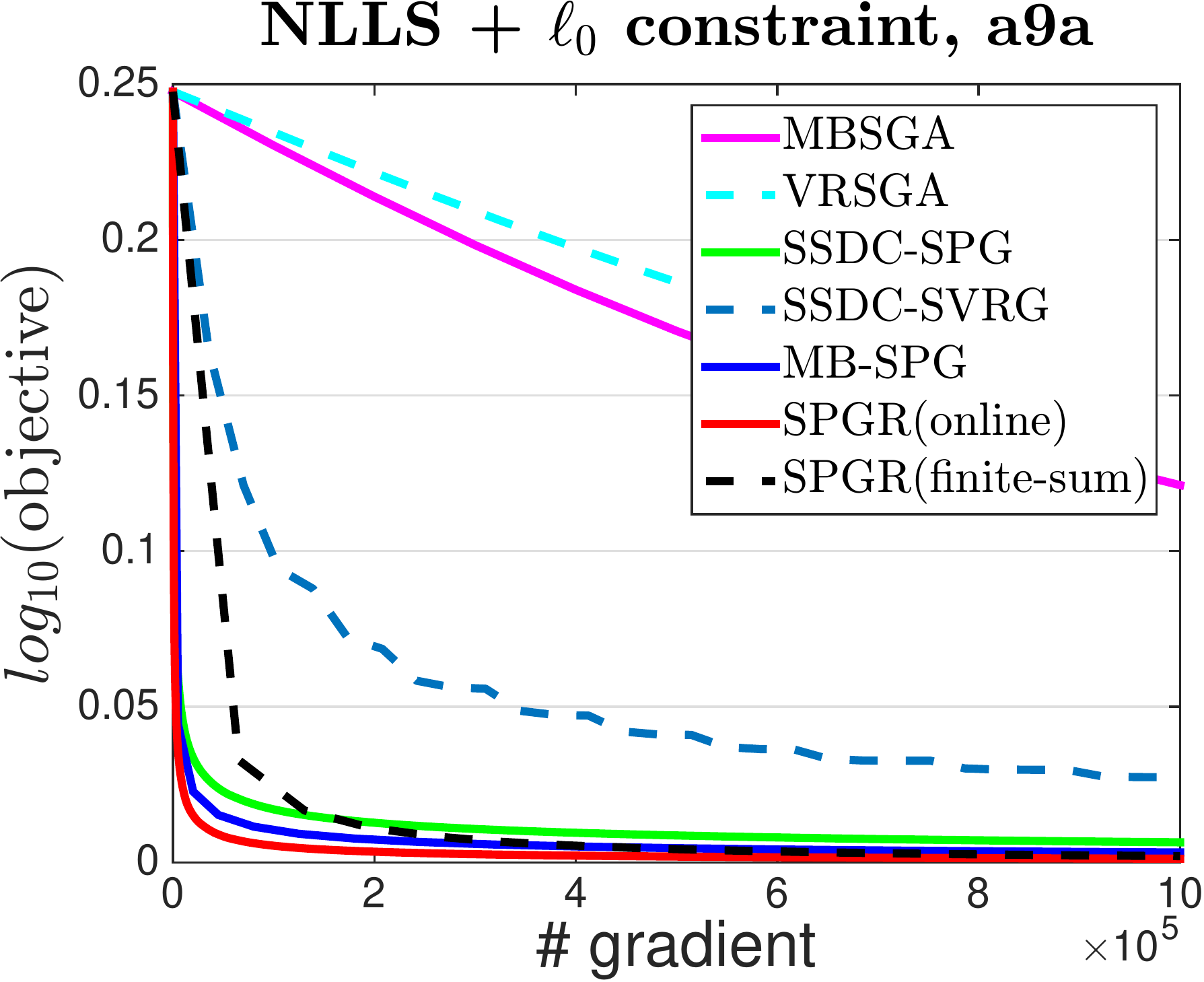}} 
     {\includegraphics[scale=0.27]{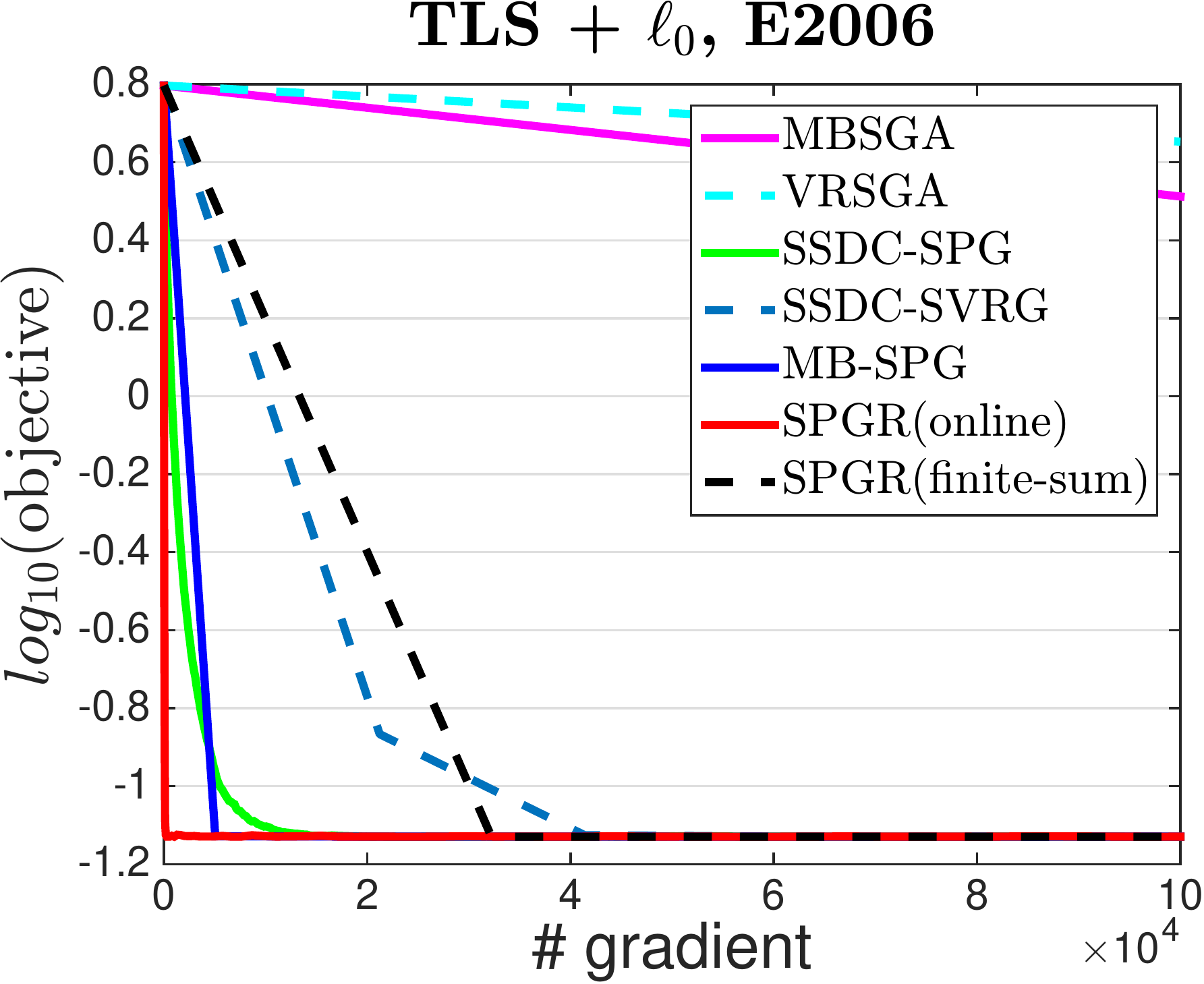}}    
      {\includegraphics[scale=0.27]{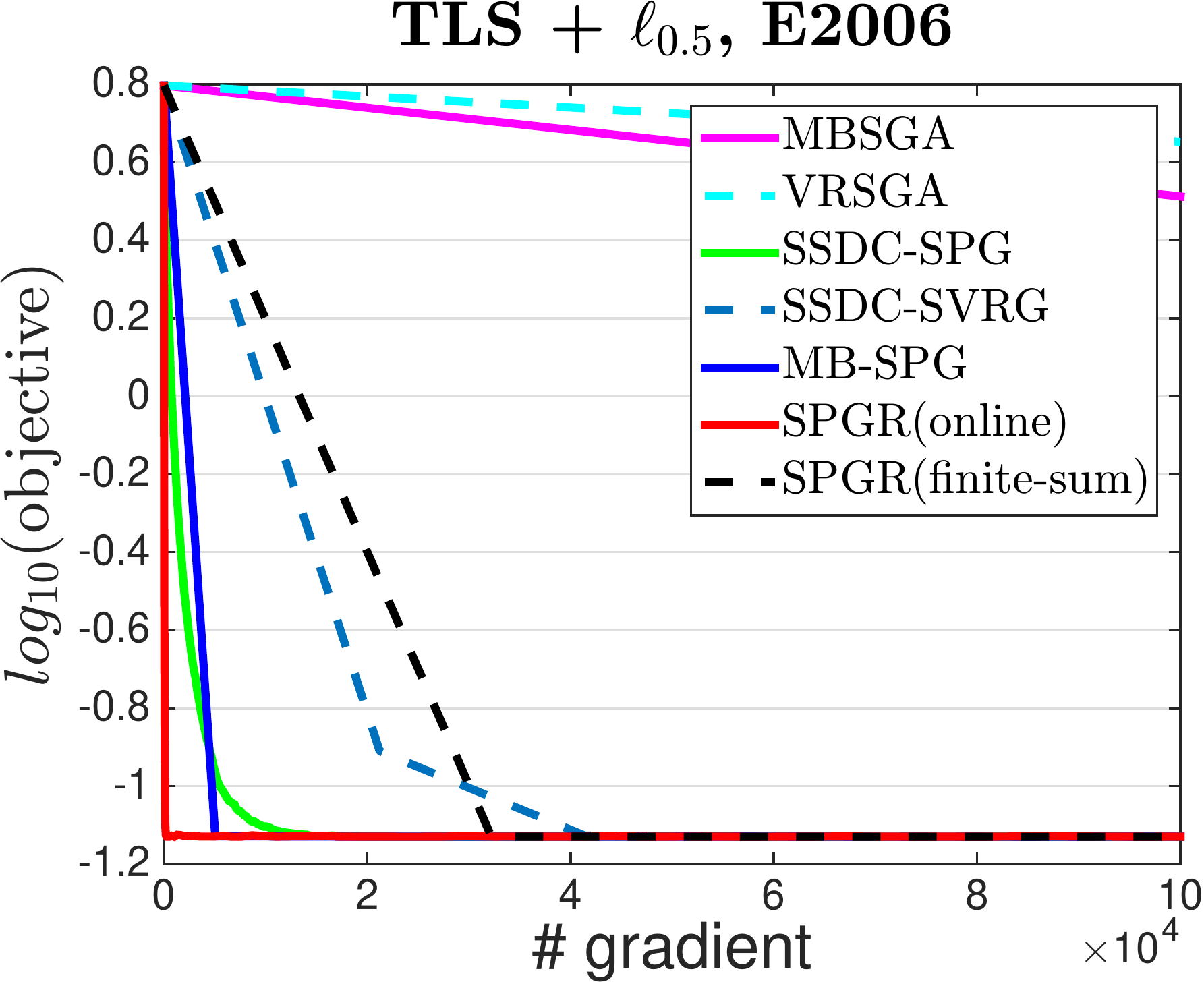}}   
       {\includegraphics[scale=0.27]{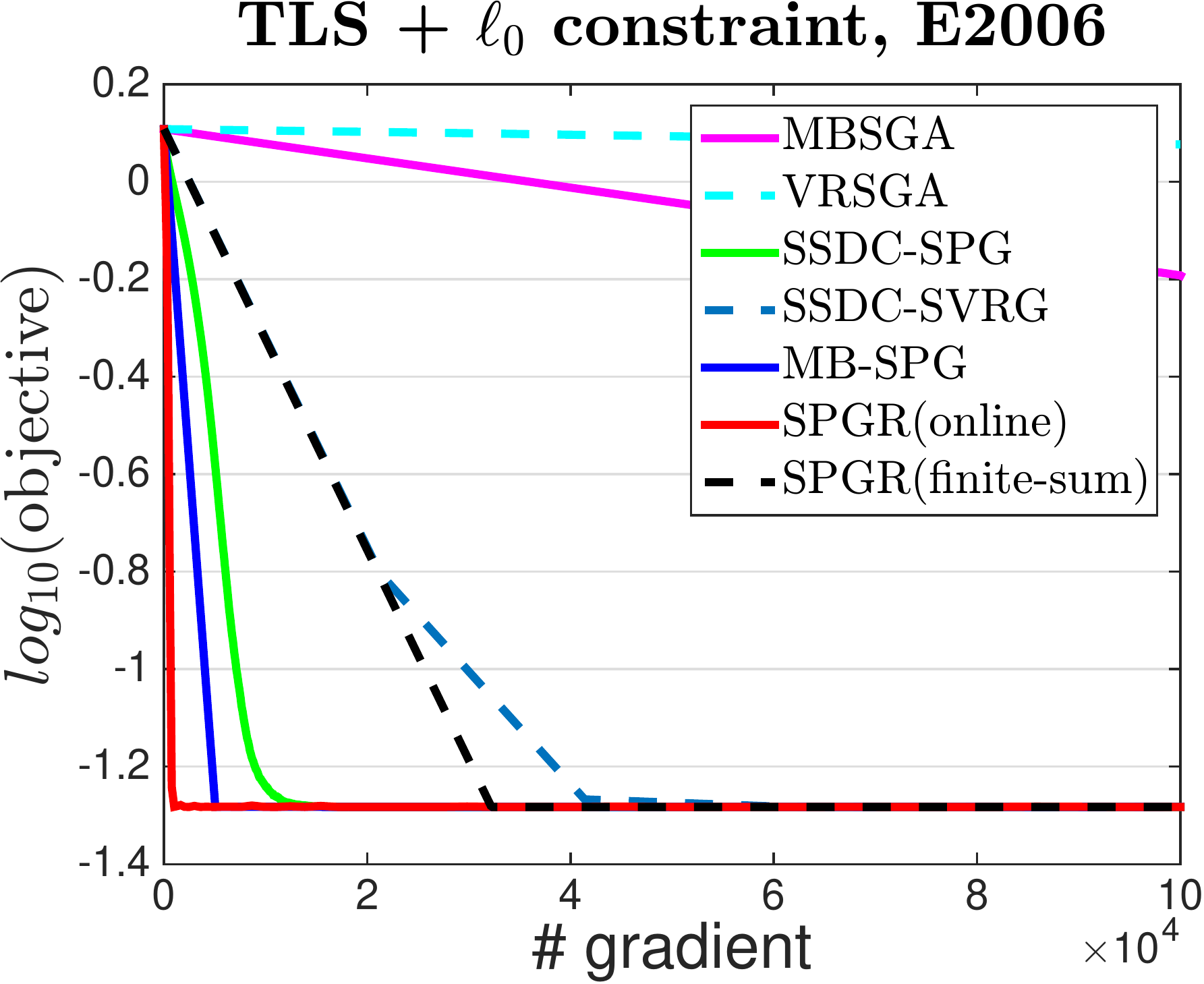}} 
    {\includegraphics[scale=0.27]{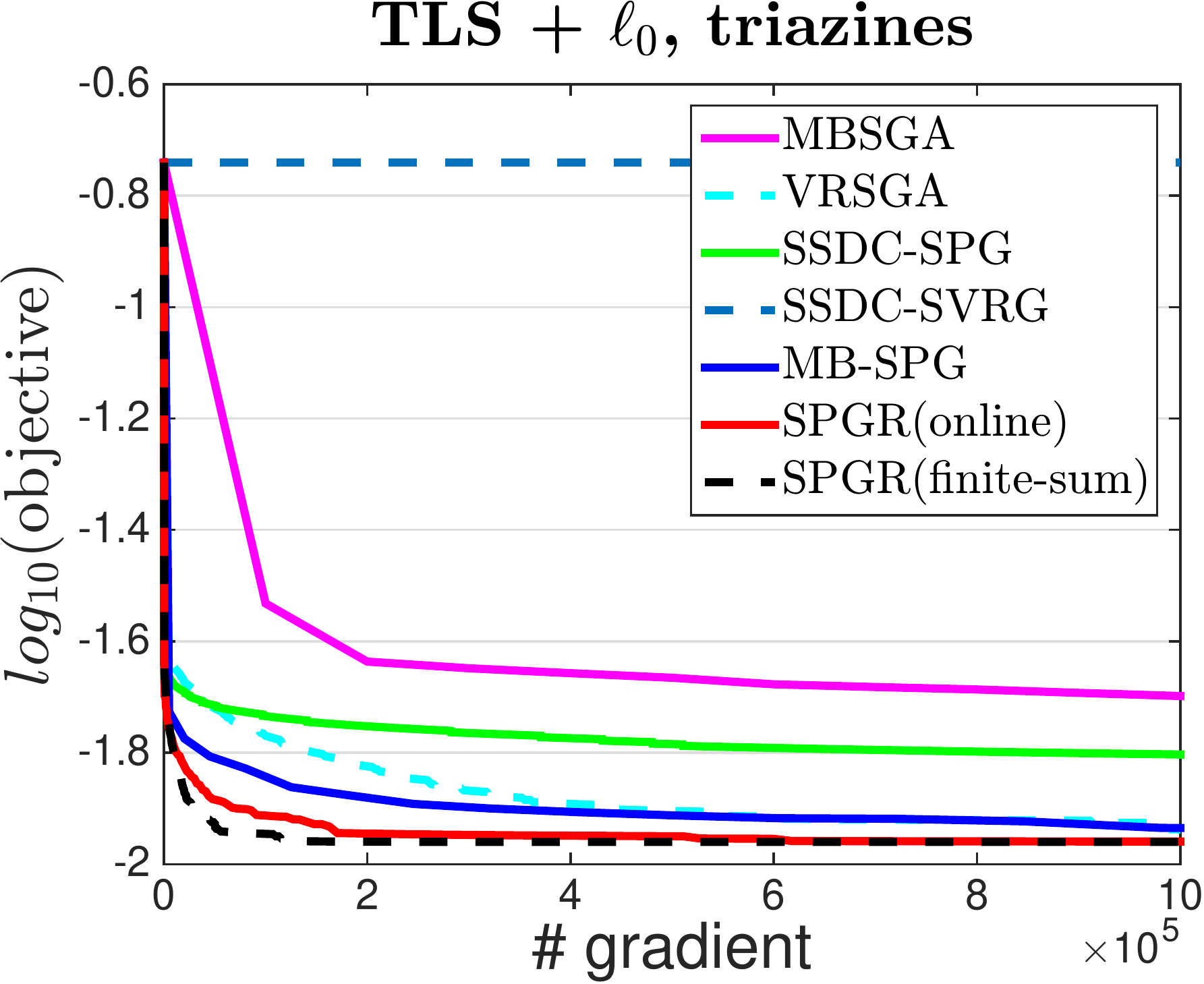}}      
    {\includegraphics[scale=0.27]{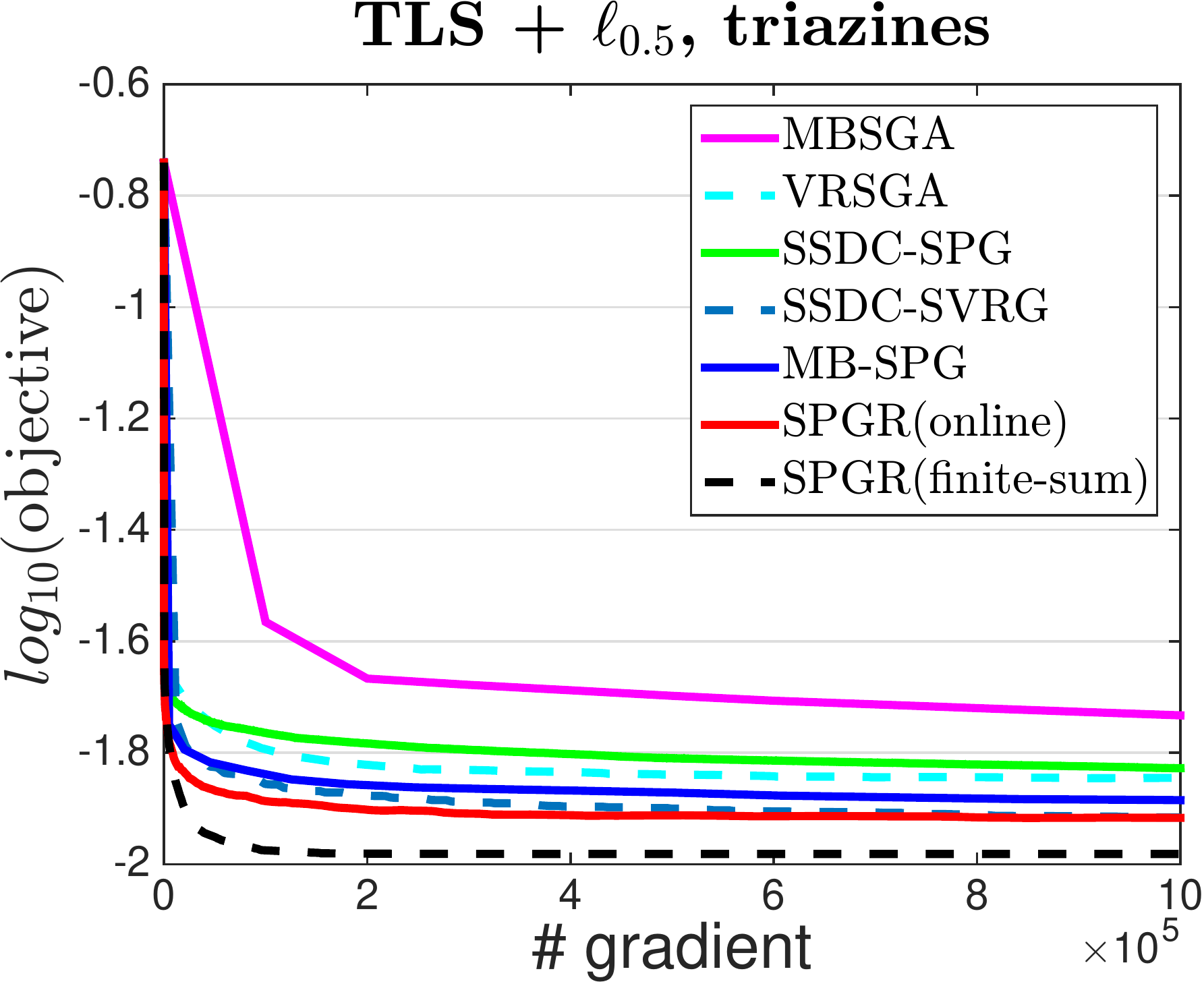}}   
    {\includegraphics[scale=0.27]{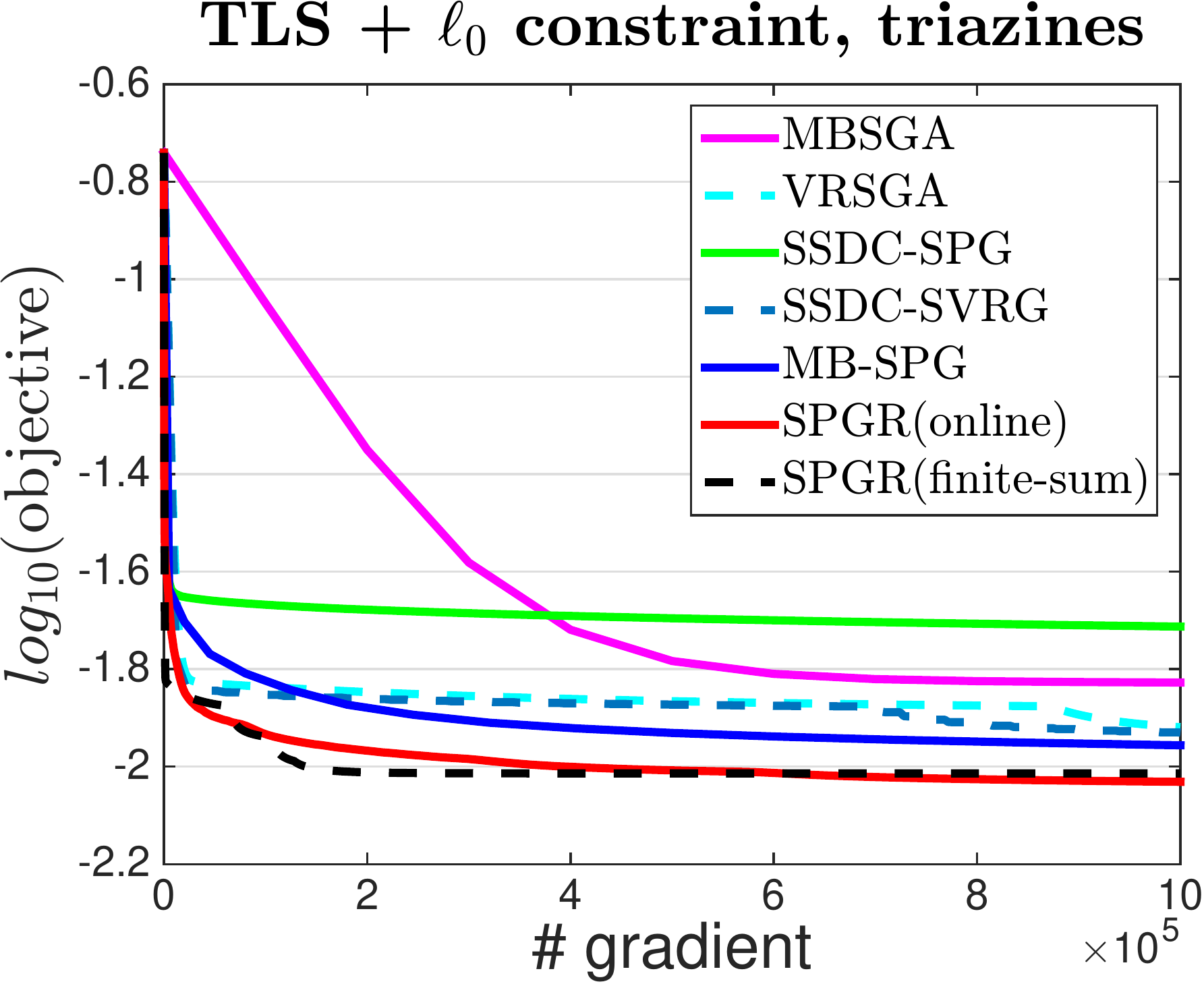}}   
\caption{Comparisons of different algorithms for regularized loss minimization.}
\label{fig:1}
\end{figure}
{\bf Regularized loss minimization.} First, we compare MB-SPG, SPGR with MBSGA, VRSGA, SSDC-SPG and SSDC-SVRG for solving the regularized non-linear least square (NLLS) classification problems $\frac{1}{n}\sum_{i=1}^{n} (b_i - \sigma(\x^\top\a_i))^2 + r(\x)$ with a sigmod function $\sigma(s) =  \frac{1}{1+e^{-s}}$ for classification, and the regularized truncated least square (TLS) loss function $\frac{1}{2n}\sum_{i=1}^{n} \alpha \log(1+(y_i - \w^\top\x_i)^2/\alpha) + r(\x)$ for regression~\citep{DBLP:journals/corr/abs-1805-07880}. Two data sets (covtype and a9a) are used for classification, and two data sets E2006 and triazines are used for regression. These data sets are downloaded from the libsvm website.  We use three different non-smooth non-convex regularizers, i.e., $\ell_0$ regularizer $r(\x) = \lambda \|\x\|_0$, $\ell_{0.5}$ regularizer $r(\x) = \lambda \|\x\|_{0.5}$, and indicator function of $\ell_0$ constraint $I_{\{\|\x\|_0\le \kappa\}}(\x)$. The truncation value $\alpha$ is set to  $\sqrt{10n}$ following~\citep{DBLP:journals/corr/abs-1805-07880}. The value of regularization parameter $\lambda$ is fixed as $10^{-4}$ and the value of $\kappa$ is fixed as $0.2 d$ where $d$ is the dimension of data. For all algorithms, we use the theoretical values of the parameters for the sake of fairness in comparison. All algorithms start with the same initial solution with all zero entries. We implement the increasing mini-batch versions of MB-SPG and SPGR (online setting) with $b=1$. The unknown parameter $\sigma$ in MBSGA is estimated following~\citep{metel2019stochastic}. The objective value (in log scale) versus the number of gradient computations for different tasks are plotted in Figure~\ref{fig:1}. The solid lines correspond to algorithms running in the online setting and the dashed lines correspond to algorithms running in the finite-sum setting. By comparing algorithms running in the online setting including MB-SPG, SPGR, MBSGA and SSDC-SPG, we can see that  the proposed algorithms (MB-SPG and SPGR) are faster across different tasks. In addition, SPGR is faster than MB-SPG. These results are consistent with our theory. By comparing algorithms running in the finite-sum setting including VRSGA, SSDC-SVRG and SPGR, we can see that the proposed SPGR is much faster, which also corroborates our theory. 

\begin{figure}[t] 
\centering
    {\includegraphics[scale=0.27]{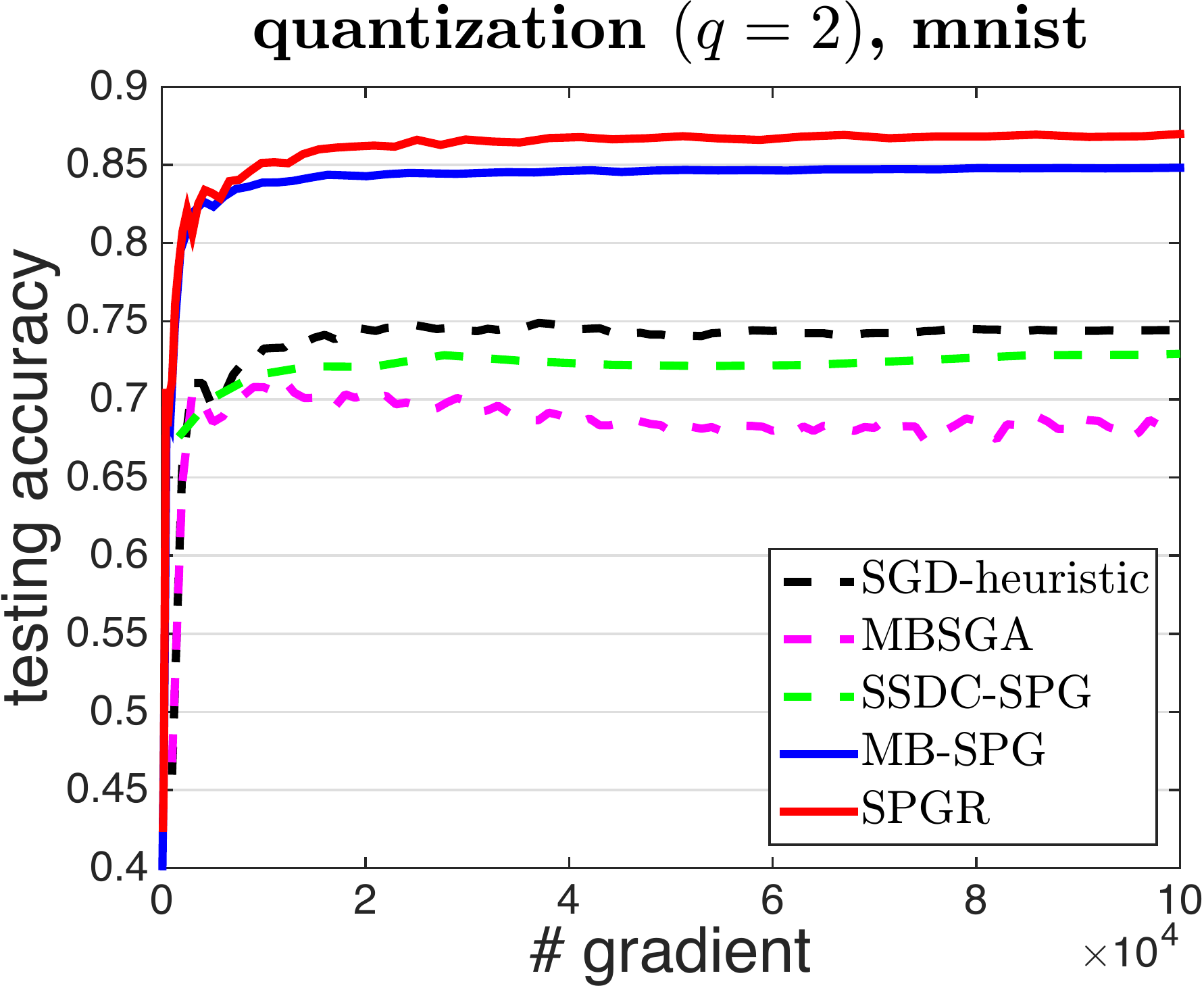}}  
            {\includegraphics[scale=0.27]{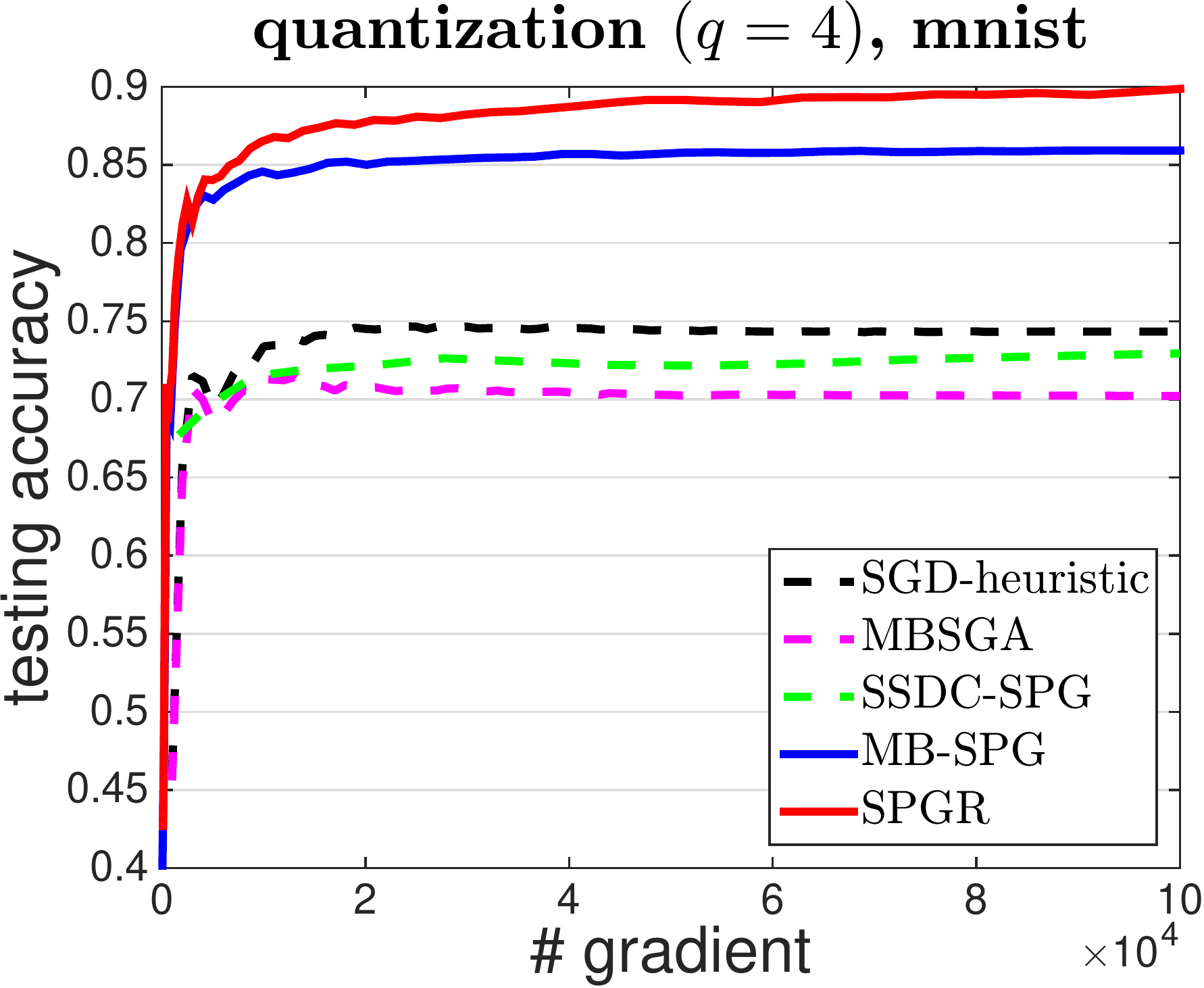}}   
    {\includegraphics[scale=0.27]{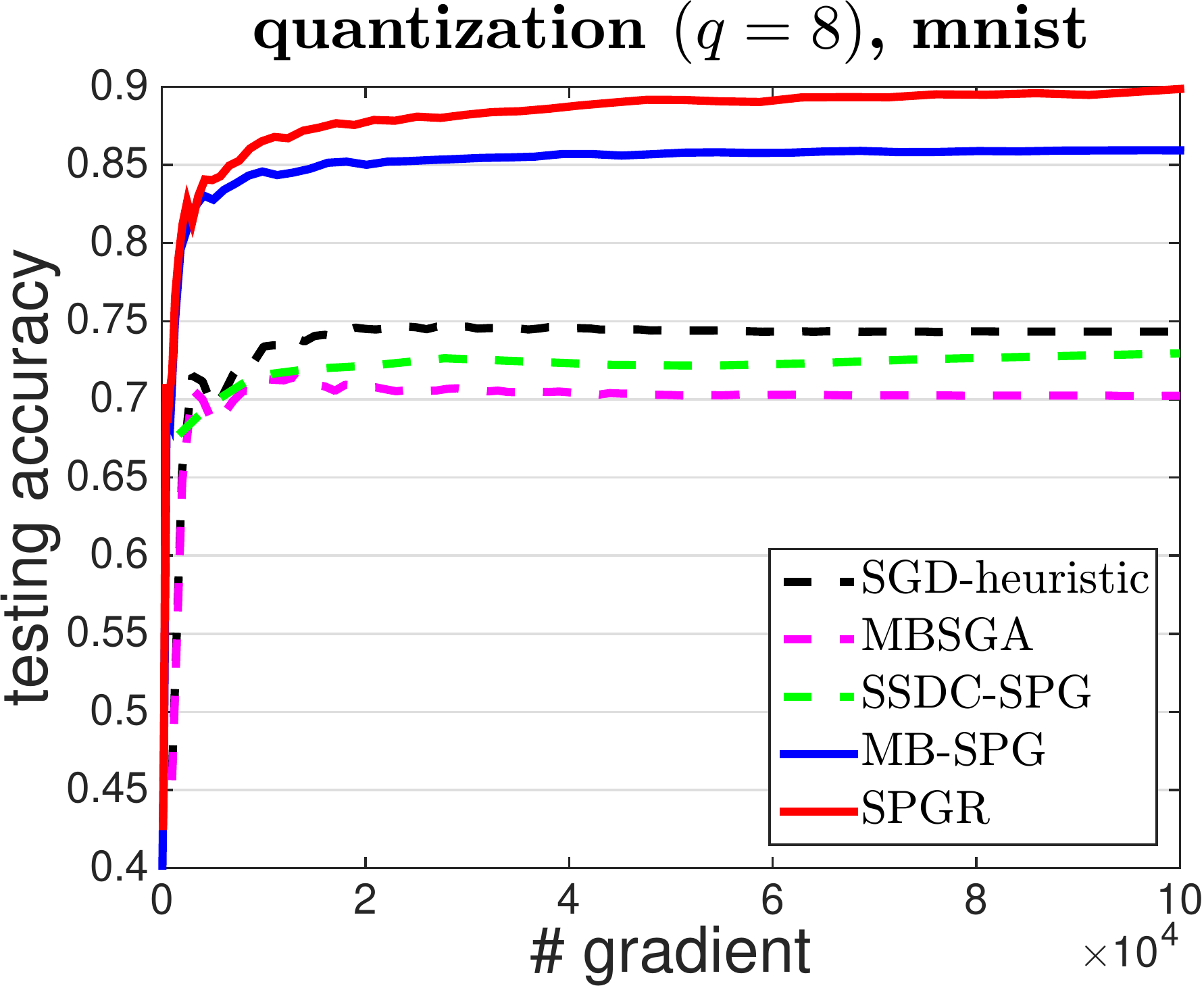}}      
        {\includegraphics[scale=0.27]{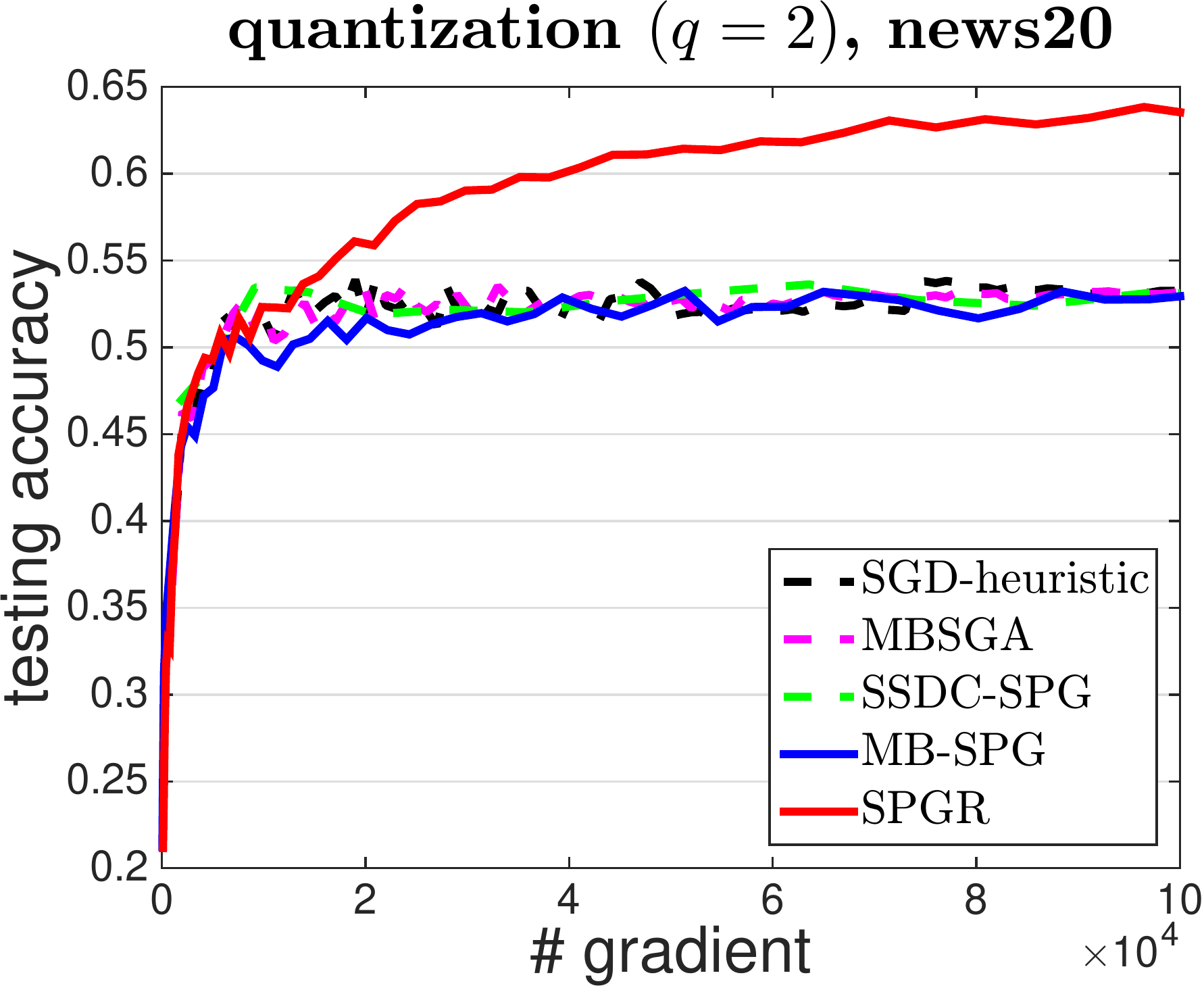}}
                {\includegraphics[scale=0.27]{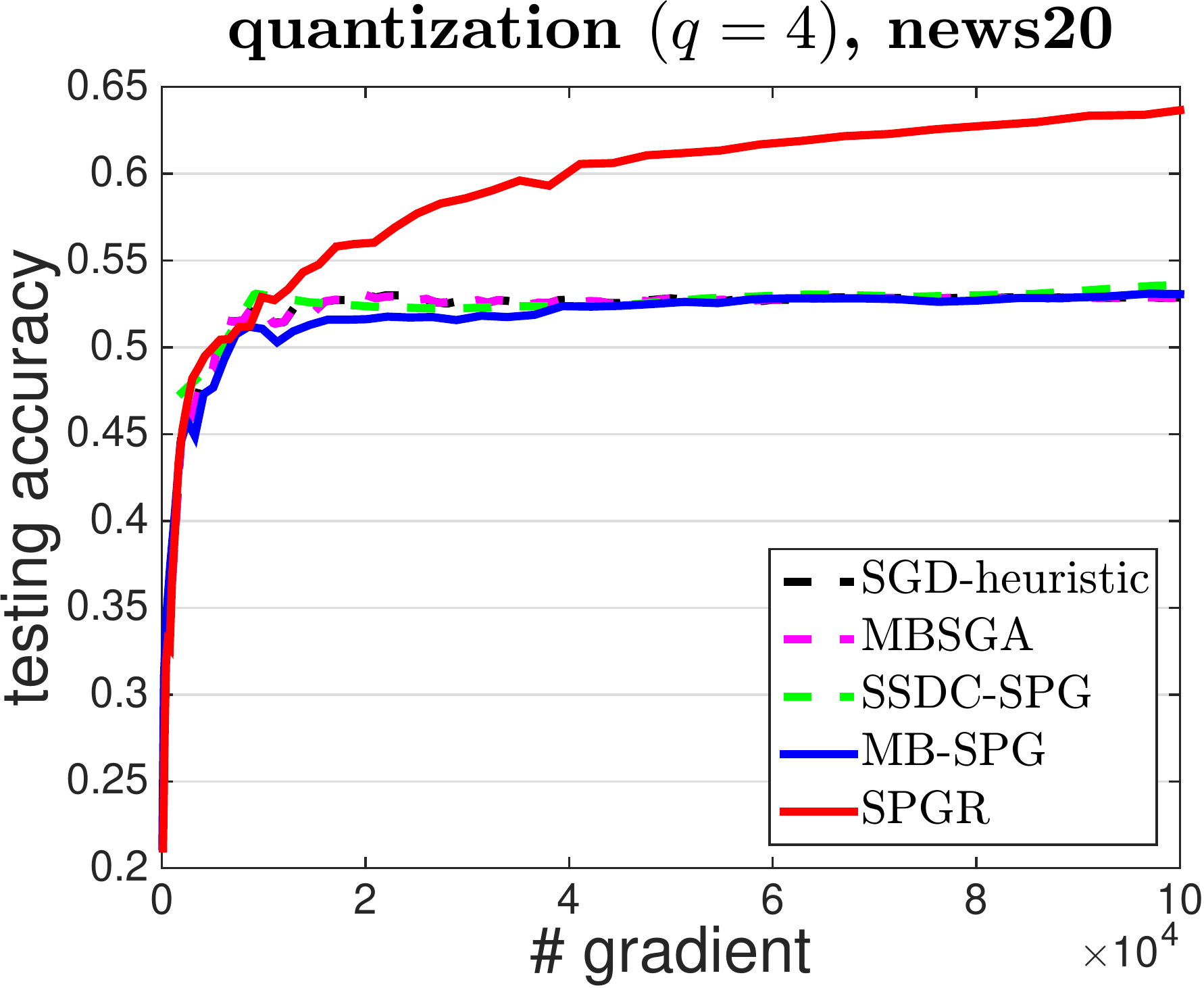}}
        {\includegraphics[scale=0.27]{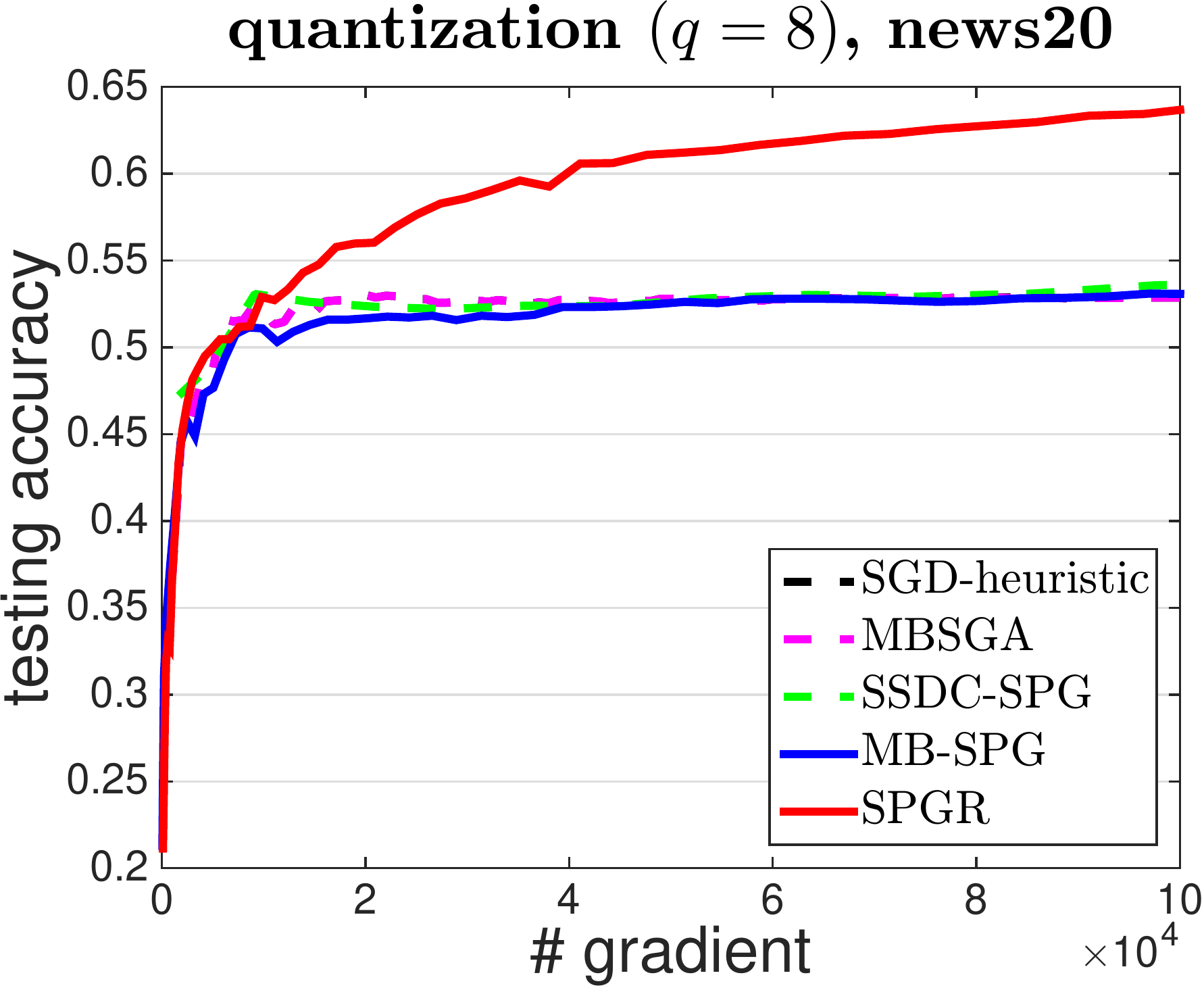}}
        {\includegraphics[scale=0.27]{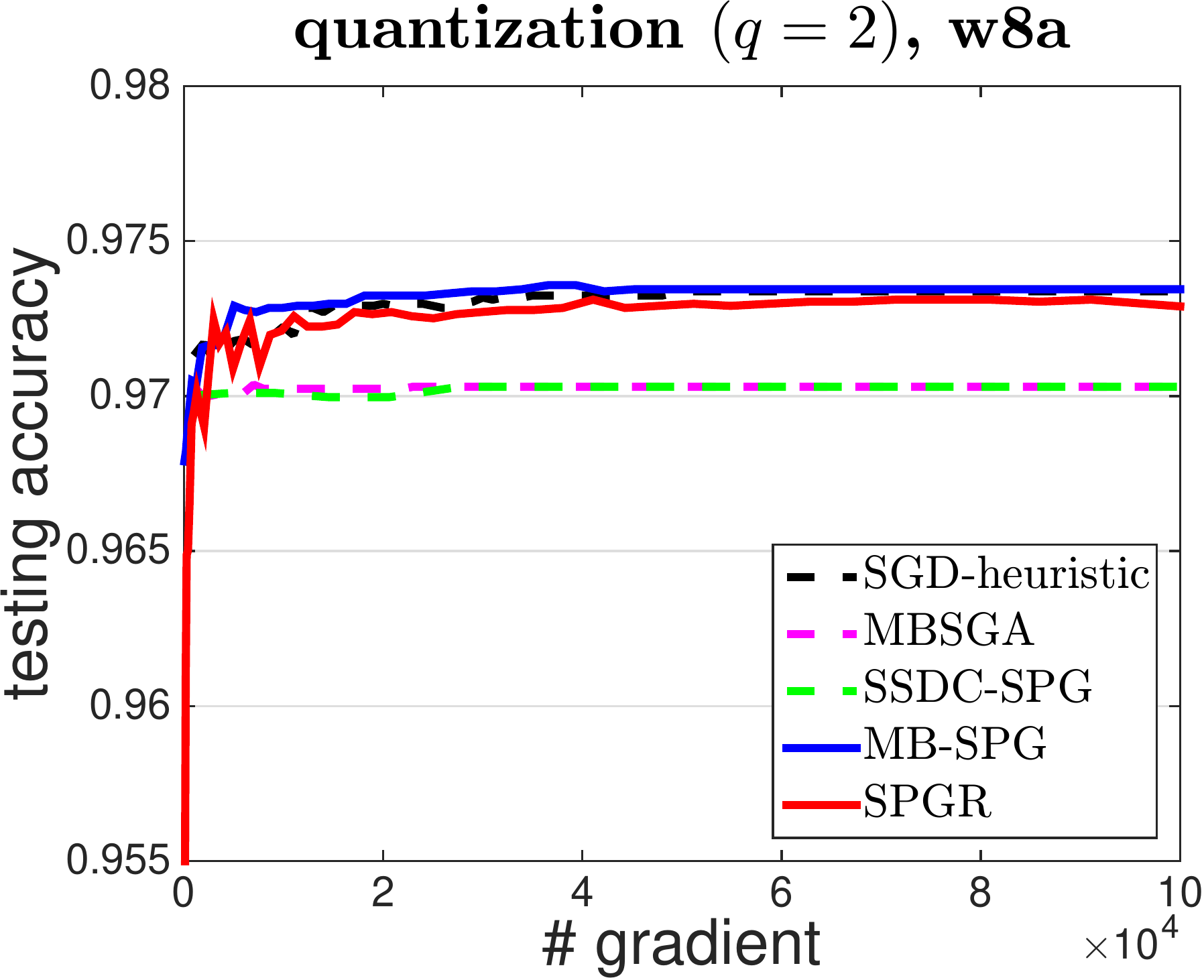}}
                 {\includegraphics[scale=0.27]{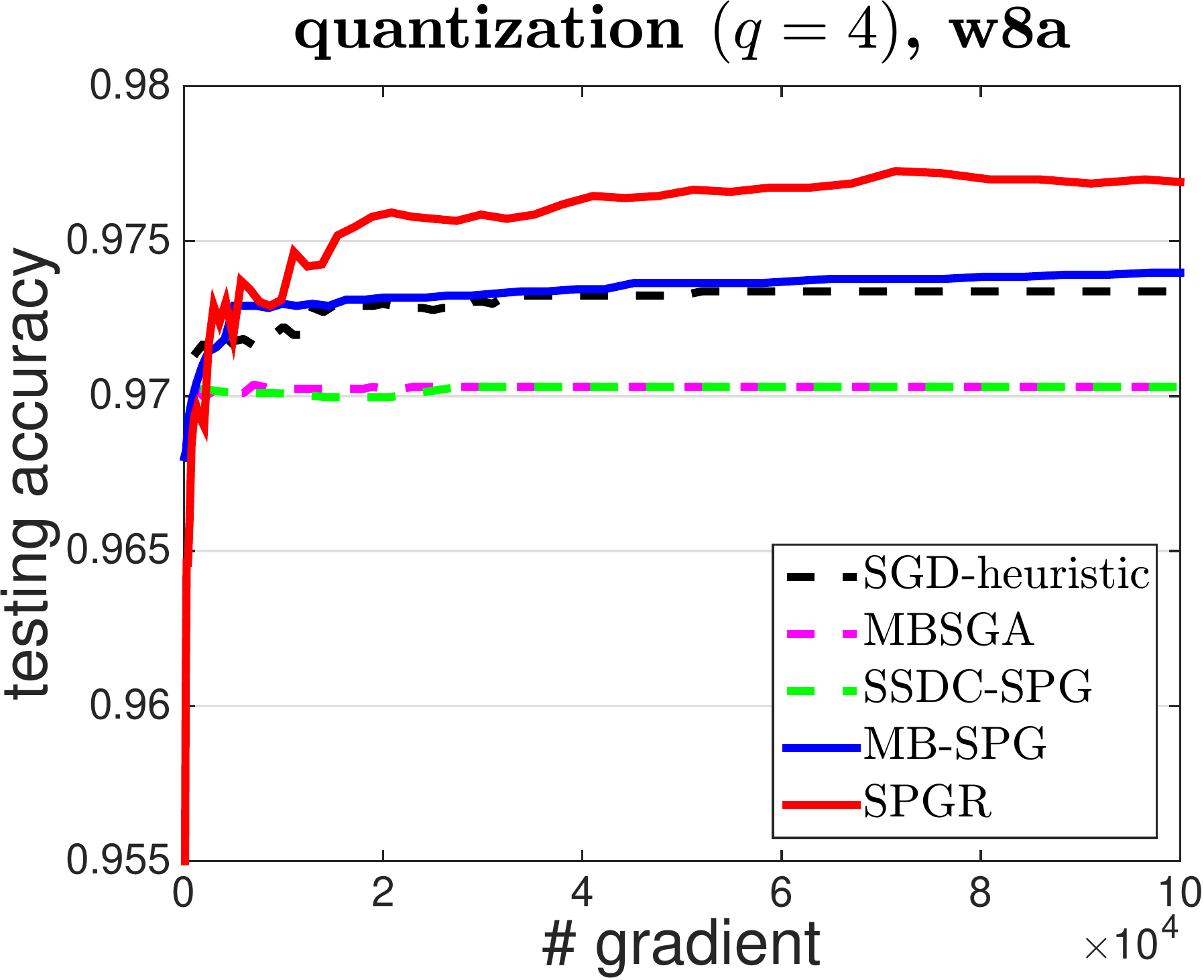}} 
         {\includegraphics[scale=0.27]{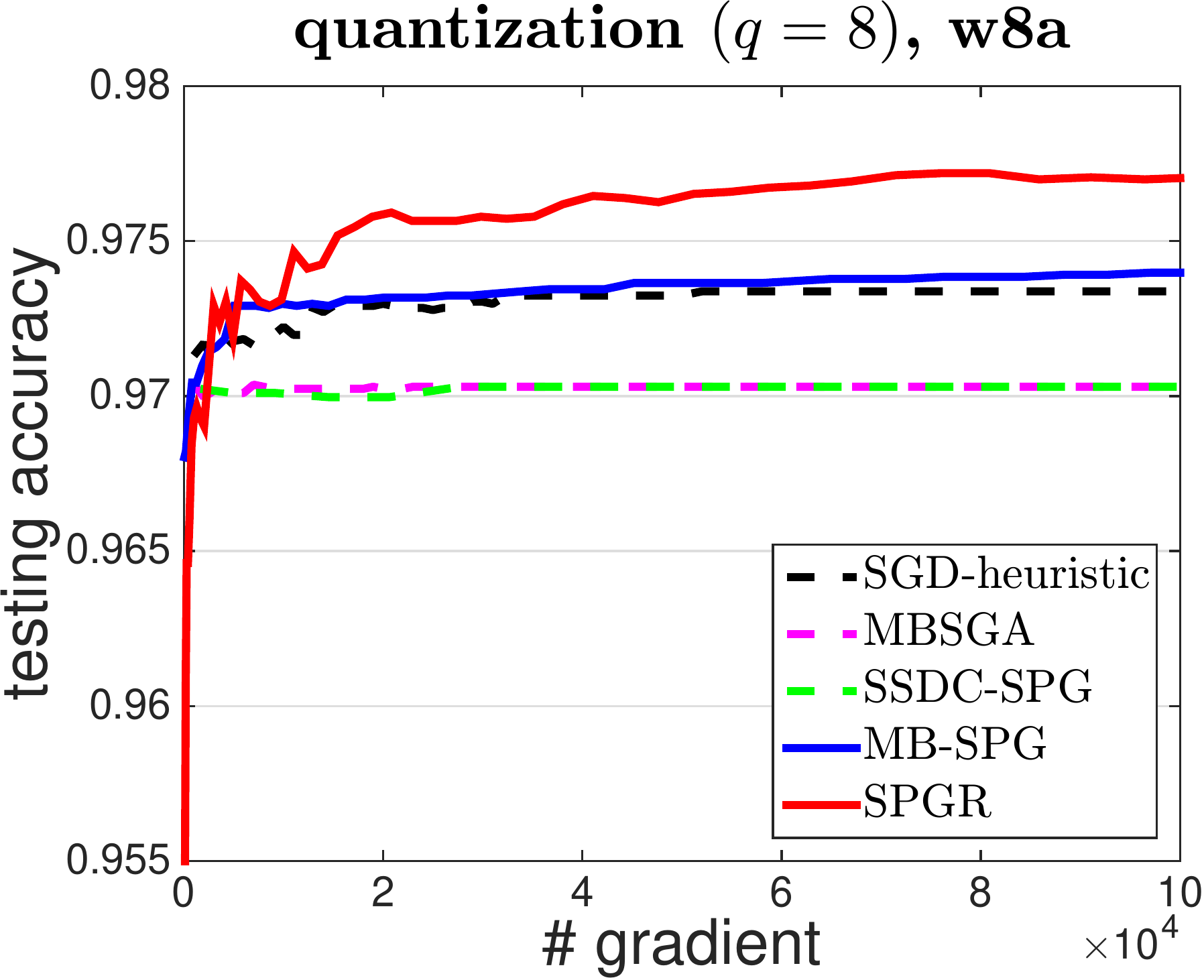}}
         {\includegraphics[scale=0.27]{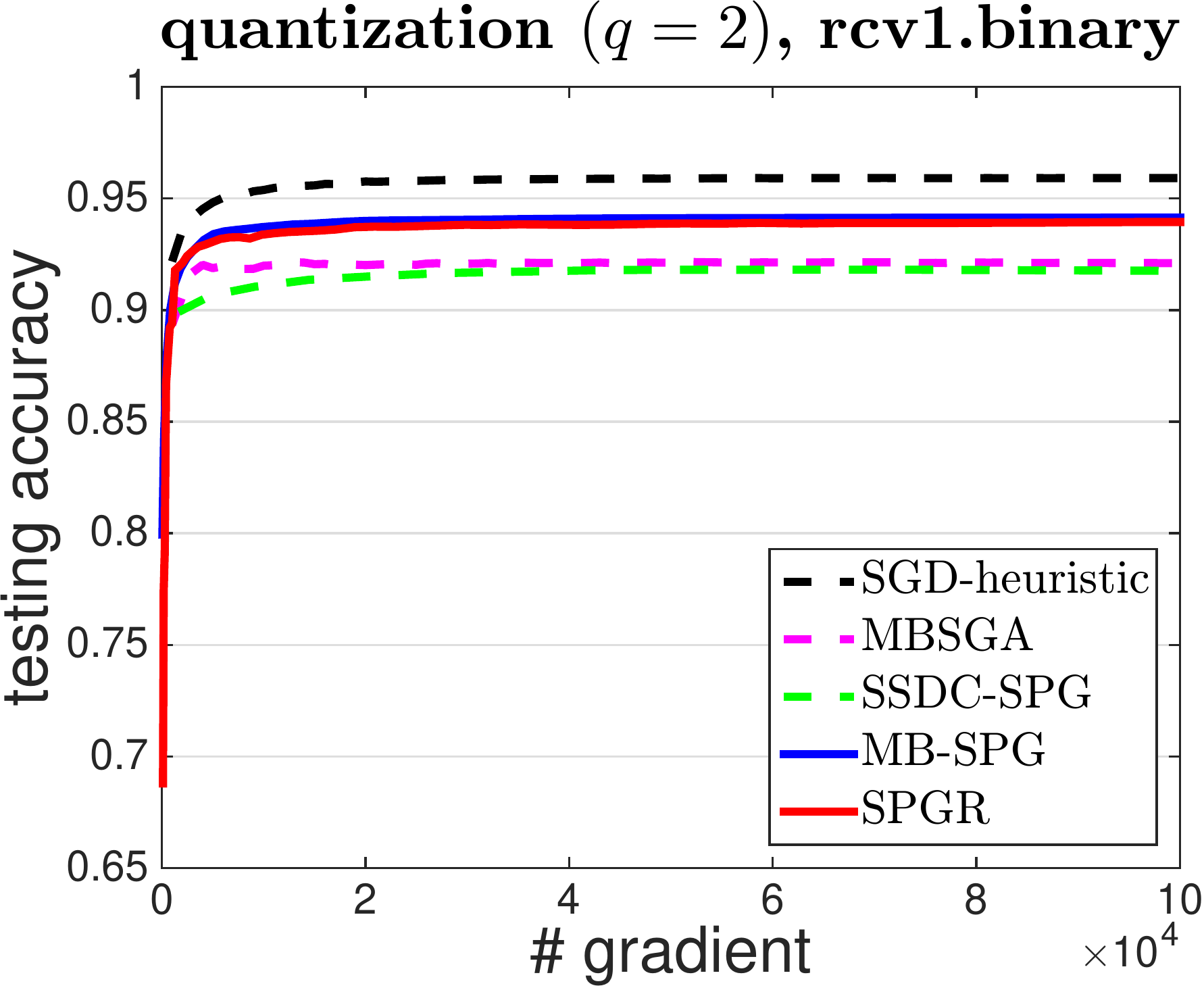}}    
        {\includegraphics[scale=0.27]{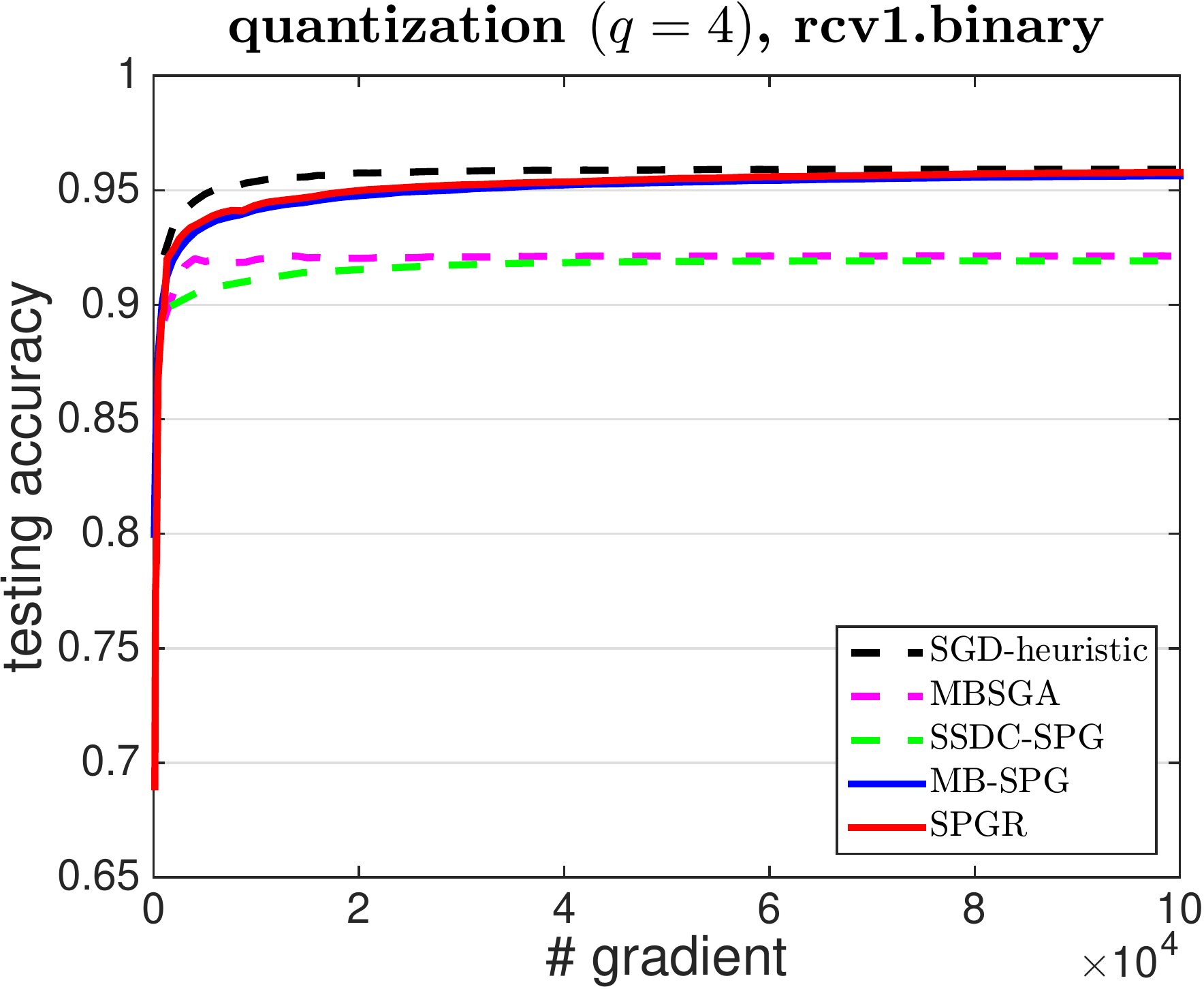}}
        {\includegraphics[scale=0.27]{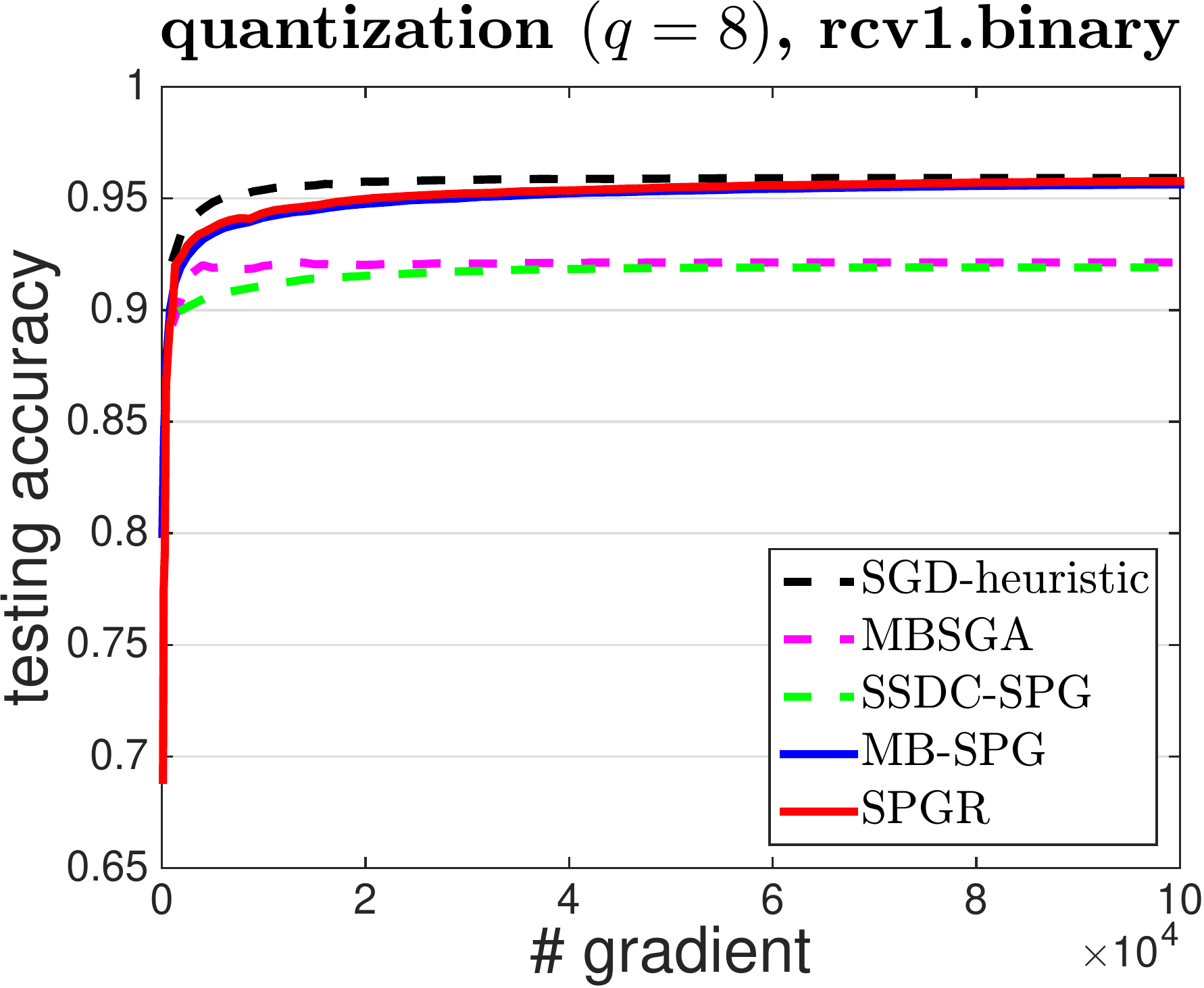}}
\caption{Comparisons of different algorithms for learning with quantization.}
\label{fig:2}
\end{figure}
{\bf Learning with Quantization}. Second, we consider the problem of learning a quantized model where the model parameter is represented by a small number of bits (e.g., 2 bits that can encode $1$ or $-1$). It has received tremendous attention in deep learning for model compression~\citep{han2015deep,wu2016quantized,polino2018model}.  An idea to formulate the problem is to consider a constrained optimization problem: $\min_{\x\in\Omega}f(\x)$ where $\Omega$ denotes a discrete set including the values that can be represented by a small number of bits. However, finding a stationary point for this problem is meaningless. This is because that for a discrete set $\Omega$, the subgradient of its indicator function $I_{\Omega}(\x)$ is the whole space~\citep{clarke1990optimization,kruger2003frechet}. Hence, we have $0\in\hat\partial(f(\x)+I_{\Omega}(\x))$ for any $\x\in\Omega$. To avoid this issue, we consider a different formulation by using a penalization of the constraint: $\min_{\x\in\R^d} f(\x) + \frac{\lambda}{2}\|\x-P_{\Omega}(\x)\|^2$,  where $P_{\Omega}(\x)$ is a projection onto the set $\Omega$ and $\lambda>0$ is a penalization parameter. This penalization-based approach is one standard way to handle complicated constraints~\citep{bertsekas2014constrained,luenberger2015linear}. It is notable that in general the penalization term is a non-smooth non-convex function of $\x$ for a non-convex set $\Omega$, though its local smoothness has been proved under some regularity condition of $\Omega$~\citep{polrock2000}. The proximal mapping of the penalization term has a closed-form solution as long as $P_{\Omega}(\x)$ can be easily computed~\citep{DBLP:journals/mp/LiP16}, which corresponds to quantization for our considered problem. 

In the experiment, we use the NLLS loss  similar to regularized loss minimization for learning a quantized non-linear model, and focus on comparison of algorithms running in the online setting including MBSGA, SSDC-SPG, MB-SPG and SPGR. We also implement  a popular heuristic SGD approach in deep learning for learning a quantized model~\citep{polino2018model}, which updates the solution simply by $\x_{t+1} = \x_t - \eta_t \nabla f(\hat\x_t; \xi_t)$ where $\hat\x_t=P_{\Omega}(\x_t)$ is the quantized model. We conduct the experiments on four data sets mnist, news20, rcv1, w8a, where the last three data sets are downloaded from the libsvm website. We compare the testing accuracy of learned quantized model versus the number of iterations, and the results are plotted in  Figure~\ref{fig:2}, where $q$ denotes the number of bits for quantization. We fix $\lambda=1$, and decrease the step size by half  every $100$ iterations for heuristic SGD, MBSGA and MB-SPG. This is helpful for generalization purpose.  We can see that the proposed SPGR algorithm has better testing accuracy in most cases, and the proposed MB-SPG has comparable performance if not better results than other baselines.

\section{Conclusions}
In this paper, we have presented the first non-asymptotic convergence analysis of stochastic proximal gradient methods for solving a non-convex optimization problem with a smooth loss function and a non-smooth non-convex regularizer. The proposed algorithms enjoy  improved complexities than the state-of-the-art results for the same problems, and also match the existing complexity results for solving non-convex minimization problems with a smooth loss and a non-smooth convex regularizer.

\bibliography{allNIPS}

\newpage
\appendix
\section{Proof of Theorem~\ref{thm:proxGD}}
\begin{proof}
Based on the update of Algorithm~\ref{alg:0}, by Exercise 8.8 and Theorem 10.1 of \citep{RockWets98} we know
\begin{align*}
  -\nabla f(\x_t) - \frac{1}{\eta }(\x_{t+1}- \x_t) \in \hat \partial r(\x_{t+1}),
\end{align*}
which implies that
\begin{align}\label{proxGD:ineq1}
\nabla f(\x_{t+1}) -\nabla f(\x_t) - \frac{1}{\eta }(\x_{t+1}- \x_t) \in \nabla f(\x_{t+1})  +  \hat \partial r(\x_{t+1}) = \hat \partial F(\x_{t+1}).
\end{align}
By the update of (\ref{update:proxGD}), we also have
\begin{align}\label{proxGD:ineq2}
 r(\x_{t+1}) +\langle \nabla f(\x_t), \x_{t+1}-\x_t\rangle + \frac{1}{2\eta }\|\x_{t+1}- \x_t\|^2 \leq r(\x_t).
\end{align}
Since $f(\x)$ is smooth with parameter $L$, then
\begin{align}\label{proxGD:ineq3}
 f(\x_{t+1}) \leq f(\x_t) +\langle \nabla f(\x_t), \x_{t+1}-\x_t\rangle + \frac{L}{2}\|\x_{t+1}- \x_t\|^2. 
\end{align}
Combining these two inequalities (\ref{proxGD:ineq2}) and (\ref{proxGD:ineq3}) and using the fact that $F(\x)=f(\x)+r(\x)$, we get
\begin{align}\label{proxGD:ineq4}
\frac{1}{2}(1/\eta -L)\|\x_{t+1}- \x_t\|^2 \leq F(\x_t) -  F(\x_{t+1}).
\end{align}
By summing the above inequalities across $t=0,\dots, T-1$ and using $F(\x_*) \leq F(\x)$ for any $\x\in\R^d$ and the Assumption~\ref{ass:1} (iii), we know
\begin{align}\label{proxGD:ineq5}
\frac{1}{2}(1/\eta -L)\sum_{t=0}^{T-1}\|\x_{t+1}- \x_t\|^2 \leq F(\x_0) -  F(\x_{T})\leq F(\x_0) -  F(\x_*) \leq \Delta.
\end{align}
On the other hand, by Young's inequality and the smoothness of $f(\x)$, 
\begin{align*}
&\|\nabla f(\x_{t+1}) -\nabla f(\x_t) - \frac{1}{\eta }(\x_{t+1}- \x_t)\|^2 \\
&\leq 2\|\nabla f(\x_{t+1}) -\nabla f(\x_t)\|^2 + \frac{2}{\eta^2 }\|\x_{t+1}- \x_t\|^2\\
 &\leq  2(L^2+\frac{1}{\eta^2 })\|\x_{t+1}- \x_t\|^2.
\end{align*}
Therefore, summing the above inequalities across $t=0,\dots, T-1$ and using the inequality (\ref{proxGD:ineq5}) with $1/\eta - L > 0$, 
\begin{align*}
&\frac{1}{T}\sum_{t=0}^{T-1}\|\nabla f(\x_{t+1}) -\nabla f(\x_t) - \frac{1}{\eta }(\x_{t+1}- \x_t)\|^2 \\
 &\leq 2(L^2+\frac{1}{\eta^2})\frac{1}{T}\sum_{t=0}^{T-1}\|\x_{t+1}- \x_t\|^2\\
&\leq  \frac{2(L^2+\frac{1}{\eta^2 })}{\frac{1}{2}(1/\eta -L)T} \Delta =  \frac{4(\eta^2 L^2+1)}{\eta(1-\eta L)T} \Delta.
\end{align*}
By (\ref{proxGD:ineq1}) we know
\begin{align*}
\text{dist}(0,\hat\partial F(\x_{t+1}))^2
\leq \|\nabla f(\x_{t+1}) -\nabla f(\x_t) - \frac{1}{\eta }(\x_{t+1}- \x_t)\|^2,
\end{align*}
then by the fact that $R$ is uniformly sampled from $\{1, \dots, T\}$,
\begin{align*}
\E[\text{dist}(0,\hat\partial F(\x_{R}))^2] = \frac{1}{T}\sum_{t=0}^{T-1}\text{dist}(0,\hat\partial F(\x_{t+1}))^2 \leq \frac{4(\eta^2 L^2+1)}{\eta(1-\eta L)T} \Delta.
\end{align*}
By the setting of $\eta = \frac{c}{L} < \frac{1}{L}$, and let $T = \frac{4(\eta^2 L^2+1)}{\eta(1-\eta L)\epsilon^2} \Delta = O(1/\epsilon^2)$, we get
\begin{align*}
\E[\text{dist}(0,\hat\partial F(\x_{R}))^2]\leq \epsilon^2.
\end{align*}
\end{proof}

\section{Proof of Corollary~\ref{cor:spider:sgd}}
\begin{proof}
The proof uses the results in Theorem~\ref{thm:spider:sgd}.\\ \\
{\bf Online setting:} The total complexity is 
\begin{align*}
&|\S_2| T + |\S_1| \bigg\lceil\frac{T}{q} \bigg\rceil
 \leq |\S_2| T + |\S_1| \frac{T}{q} + |\S_1|\\
=& \sqrt{\frac{4(\gamma + \theta L)\sigma^2}{\theta L\epsilon^2}} \cdot \frac{2(2\theta+ \gamma \eta)\Delta}{\eta \theta \epsilon^2} \\&+\frac{(\gamma + 4\theta L)\sigma^2}{\theta L\epsilon^2} \cdot \frac{2(2\theta+ \gamma \eta)\Delta}{\eta \theta \epsilon^2} \cdot \sqrt{\frac{\theta L\epsilon^2}{4(\gamma + \theta L)\sigma^2}}+ \frac{(\gamma + 4\theta L)\sigma^2}{\theta L\epsilon^2}
=  O(\epsilon^{-3}).
\end{align*}

{\bf Finite-sum setting:}
The proof can be obtained by a slight change in the proof of Theorem~\ref{thm:spider:sgd} using the fact that 
\begin{align*}
\E[ \|\g_{(n_{t}-1)q} - \nabla f(\x_{(n_{t}-1)q})\|^2] = 0.
\end{align*}
Then the total complexity is 
\begin{align*}
|\S_2| T + |\S_1| \bigg\lceil\frac{T}{q} \bigg\rceil
 \leq & |\S_2| T + |\S_1| \frac{T}{q} + |\S_1|\\
=& \sqrt{n} \cdot \frac{(2\theta+ \gamma \eta)\Delta}{\eta \theta \epsilon^2} +n \cdot \frac{(2\theta+ \gamma \eta)\Delta}{\eta \theta \epsilon^2} \cdot \sqrt{\frac{1}{n}} +n
=  O(\sqrt{n}\epsilon^{-2} + n).
\end{align*}
\end{proof}

\section{Proof of Theorem~\ref{thm:spider:sgd:imb}}
\begin{proof}
We first focus on the online setting.
Following the similar analysis of Theorem~\ref{thm:spider:sgd} we have
\begin{align}\label{proxSGimb:spider:ineq2}
\nonumber &\frac{1}{T}\sum_{t=0}^{T-1}\E[F(\x_{t+1})]-\E[F(\x_t)] \\
 \leq&  \frac{1}{2LT}\sum_{t=0}^{T-1}\E[\|\g_t-\nabla f(\x_t)\|^2]  -  \frac{1-2\eta L}{2\eta T}\sum_{t=0}^{T-1}\E[\|\x_{t+1}- \x_t\|^2].
\end{align}
We want to upper bound the variance term $\sum_{t=0}^{T-1}\E[\|\g_t- \nabla f(\x_t)  \|^2]$ by using Lemma~1 of \citep{fang2018spider}. By the updates of Algorithm \ref{alg:2:imb} we know it can be written as
\begin{align}\label{proxSGimb:spider:ineq3}
\nonumber &\sum_{t=0}^{T-1}\E[\|\g_t-\nabla f(\x_t)\|^2]\\
\nonumber =& \sum_{j=0}^{b-1}\E[\|\g_j-\nabla f(\x_j)\|^2] + \sum_{j=b}^{3b-1}\E[\|\g_j-\nabla f(\x_j)\|^2] + \sum_{j=3b}^{6b-1}\E[\|\g_j-\nabla f(\x_j)\|^2] \\
&+ \dots + \sum_{j=s(s-1)b/2}^{s(s+1)b/2-1}\E[\|\g_j-\nabla f(\x_j)\|^2] + \dots + \sum_{j=S(S-1)b/2}^{T-1}\E[\|\g_j-\nabla f(\x_j)\|^2]
\end{align}
In particular, by Lemma~\ref{lem:aux:1}, for any $t$ such that $s(s-1)b/2 \leq t \leq s(s+1)b/2-1$ in Algorithm~\ref{alg:2:imb}, we have
\begin{align}\label{proxSGimb:spider:ineq4}
\nonumber \E[\|\g_{t} - \nabla f(\x_{t})\|^2] \leq& \frac{L^2}{|\S_{2,s}|}\sum_{i=s(s-1)b/2 }^{t} \E[\|\x_{i+1} -\x_{i}\|^2] +\E[ \|\g_{s(s-1)b/2} - \nabla f(\x_{s(s-1)b/2})\|^2]\\
\leq& \frac{L^2}{|\S_{2,s}|}\sum_{i=s(s-1)b/2 }^{t} \E[\|\x_{i+1} -\x_{i}\|^2] +\frac{\sigma^2}{|\S_{1,s}|},
\end{align}
where the second inequality is due to Assumption~\ref{ass:1} (ii). For any $t$ such that $s(s-1)b/2 \leq t \leq s(s+1)b/2-1$, we take the telescoping sum of (\ref{proxSGimb:spider:ineq4}) over $t$ from $s(s-1)b/2$ to $t$. 
\begin{align}\label{proxSGimb:spider:ineq5}
\nonumber &\sum_{j=s(s-1)b/2 }^{t}  \E[\|\g_{j} - \nabla f(\x_{j})\|^2]\\ 
\nonumber \leq &\frac{L^2}{|\S_{2,s}|}\sum_{j=s(s-1)b/2 }^{t}  \sum_{i=s(s-1)b/2 }^{j} \E[\|\x_{i+1} -\x_{i}\|^2] +\sum_{j=s(s-1)b/2 }^{t} \frac{\sigma^2}{|\S_{1,s}|}\\
\nonumber \leq& \frac{L^2}{|\S_{2,s}|}\sum_{j=s(s-1)b/2 }^{t}  \sum_{i=s(s-1)b/2 }^{t} \E[\|\x_{i+1} -\x_{i}\|^2] +\sum_{j=s(s-1)b/2 }^{t} \frac{\sigma^2}{|\S_{1,s}|}\\
\nonumber \leq& \frac{L^2 bs}{|\S_{2,s}|} \sum_{i=s(s-1)b/2 }^{t} \E[\|\x_{i+1} -\x_{i}\|^2] +\sum_{j=s(s-1)b/2 }^{t} \frac{\sigma^2}{|\S_{1,s}|}\\
= & L^2 \sum_{j=s(s-1)b/2 }^{t} \E[\|\x_{j+1} -\x_{j}\|^2] +\sum_{j=s(s-1)b/2 }^{t} \frac{\sigma^2}{b^2s^2},
\end{align}
where the second inequality is due to $j\leq t$; the third inequality is due to $s(s-1)b/2 \leq t \leq s(s+1)b/2-1$; the last equality is due to $|\S_{1,s}| = b^2s^2$ and $|\S_{2,s}| = bs$.
Plugging inequality (\ref{proxSGimb:spider:ineq5}) into equality (\ref{proxSGimb:spider:ineq3}), we get
\begin{align}\label{proxSGimb:spider:ineq6}
\nonumber &\sum_{t=0}^{T-1}\E[\|\g_t-\nabla f(\x_t)\|^2]\\
\nonumber \leq& \sum_{j=0}^{b-1}\left(L^2\E[\|\x_{j+1} -\x_{j}\|^2] + \frac{\sigma^2}{b^2 1^2}\right) + \sum_{j=b}^{3b-1}\left(L^2\E[\|\x_{j+1} -\x_{j}\|^2] + \frac{\sigma^2}{b^2 2^2}\right ) \\
\nonumber  &+ \dots + \sum_{j=s(s-1)b/2}^{s(s+1)b/2-1}\left(L^2\E[\|\x_{j+1} -\x_{j}\|^2] + \frac{\sigma^2}{b^2s^2}\right)+ \dots \\
\nonumber &+ \sum_{j=S(S-1)b/2}^{T-1}\left(L^2\E[\|\x_{j+1} -\x_{j}\|^2] + \frac{\sigma^2}{b^2 S^2}\right)\\
\nonumber = & L^2 \sum_{t=0}^{T-1}\E[\|\|\x_{t+1} -\x_{t}\|^2] + \sum_{j=0}^{b-1}  \frac{\sigma^2}{b^2 1^2} + \sum_{j=b}^{3b-1} \frac{\sigma^2}{b^2 2^2} + \dots + \sum_{j=s(s-1)b/2}^{s(s+1)b/2-1} \frac{\sigma^2}{b^2s^2}+ \dots \\
\nonumber &+ \sum_{j=S(S-1)b/2}^{T-1} \frac{\sigma^2}{b^2 S^2}\\
= & L^2 \sum_{t=0}^{T-1}\E[\|\|\x_{t+1} -\x_{t}\|^2] + \sum_{s=1}^{S} \frac{\sigma^2}{bs}.
\end{align}
Plugging above inequality (\ref{proxSGimb:spider:ineq6}) into inequality (\ref{proxSGimb:spider:ineq2}) we then have
\begin{align}\label{proxSGimb:spider:ineq7}
\nonumber &\frac{1}{T}\sum_{t=0}^{T-1}\E[F(\x_{t+1})]-\E[F(\x_t)] \\
 \leq&  \frac{1}{2L}\frac{1}{T}\left( L^2 \sum_{t=0}^{T-1}\E[\|\|\x_{t+1} -\x_{t}\|^2] + \sum_{s=1}^{S} \frac{\sigma^2}{bs} \right) -  \frac{1-2\eta L}{2\eta}\frac{1}{T}\sum_{t=0}^{T-1}\E[\|\x_{t+1}- \x_t\|^2].
\end{align}
Rearranging the inequality  (\ref{proxSGimb:spider:ineq7}), we know
\begin{align}\label{proxSGimb:spider:ineq8}
 \frac{1}{T}\sum_{t=0}^{T-1}\E[\|\x_{t+1}- \x_t\|^2]  \leq&  \frac{\E[F(\x_{0})]-\E[F(\x_T)]}{\theta T} +\frac{1}{2\theta LT}  \sum_{s=1}^{S} \frac{\sigma^2}{bs}
\leq  \frac{\Delta}{\theta T} +\frac{1}{2\theta LT}  \sum_{s=1}^{S} \frac{\sigma^2}{bs},
\end{align}
where $\theta := \frac{1-3\eta L}{2\eta}>0$.

On the other hand, similar to the proof of Theorem~\ref{thm:spider:sgd} by (\ref{proxSG:spider:ineq11}) we also have
\begin{align}\label{proxSGimb:spider:ineq9}
\nonumber &\frac{1}{T}\sum_{t=0}^{T-1}\E[\|\g_t-\nabla f(\x_{t+1}) + \frac{1}{\eta}(\x_{t+1}-\x_t)\|^2] \\
\nonumber \leq&2 \frac{1}{T}\sum_{t=0}^{T-1}\E[\|\g_t-\nabla f(\x_{t})\|^2]+ \frac{1}{T}\sum_{t=0}^{T-1}\frac{2(\E[F(\x_t)] -  \E[F(\x_{t+1})])}{\eta} \\
\nonumber  &+ (2L^2 + \frac{1}{\eta^2} +\frac{2L}{\eta} )\frac{1}{T}\sum_{t=0}^{T-1}\E[\|\x_{t+1}-\x_t\|^2]\\
\leq&2 \frac{1}{T}\sum_{t=0}^{T-1}\E[\|\g_t-\nabla f(\x_{t})\|^2]+ \frac{2\Delta}{\eta T} + (2L^2 + \frac{1}{\eta^2} +\frac{2L}{\eta} )\frac{1}{T}\sum_{t=0}^{T-1}\E[\|\x_{t+1}-\x_t\|^2].
\end{align}
Plugging inequality (\ref{proxSGimb:spider:ineq6}) into inequality (\ref{proxSGimb:spider:ineq9}),
\begin{align}\label{proxSGimb:spider:ineq10}
\nonumber &\frac{1}{T}\sum_{t=0}^{T-1}\E[\|\g_t-\nabla f(\x_{t+1}) + \frac{1}{\eta}(\x_{t+1}-\x_t)\|^2] \\
\nonumber \leq&2 \frac{L^2 \sum_{t=0}^{T-1}\E[\|\|\x_{t+1} -\x_{t}\|^2] + \sum_{s=1}^{S} \frac{\sigma^2}{bs}}{T}+ \frac{2\Delta}{\eta T} + (2L^2 + \frac{1}{\eta^2} +\frac{2L}{\eta} )\frac{1}{T}\sum_{t=0}^{T-1}\E[\|\x_{t+1}-\x_t\|^2]\\
\nonumber =& \frac{2}{T}\sum_{s=1}^{S} \frac{\sigma^2}{bs}+ \frac{2\Delta}{\eta T} + (4L^2 + \frac{1}{\eta^2} +\frac{2L}{\eta} )\frac{1}{T}\sum_{t=0}^{T-1}\E[\|\x_{t+1}-\x_t\|^2]\\
\nonumber \leq& \frac{2}{T}\sum_{s=1}^{S} \frac{\sigma^2}{bs}+ \frac{2\Delta}{\eta T} + (4L^2 + \frac{1}{\eta^2} +\frac{2L}{\eta} )\left(  \frac{\Delta}{\theta T} +\frac{1}{2\theta LT}  \sum_{s=1}^{S} \frac{\sigma^2}{bs}\right)\\
=& \frac{(2\theta+\gamma\eta)\Delta}{\theta \eta T} +\frac{4\theta L + \gamma}{2\theta LT}  \sum_{s=1}^{S} \frac{\sigma^2}{bs},
\end{align}
where $\gamma = 4L^2 + \frac{1}{\eta^2} +\frac{2L}{\eta}$, the last inequality is due to (\ref{proxSGimb:spider:ineq8}). 
Combining above inequality with the fact that $\nabla f(\x_{t+1}) -\g_t - \frac{1}{\eta }(\x_{t+1}- \x_t) \in \nabla f(\x_{t+1})  +  \hat \partial r(\x_{t+1}) = \hat \partial F(\x_{t+1})$ 
and taking the expectation, we have
\begin{align*}
&\E_R[\text{dist}(0,\hat \partial F(\x_{R}))^2] \\
\leq & \frac{1}{T} \sum_{t=0}^{T-1}\E[\|\g_t-\nabla f(\x_{t+1}) + \frac{1}{\eta}(\x_{t+1}-\x_t)\|^2]\\
\leq & \frac{(2\theta+\gamma\eta)\Delta}{\theta \eta T} +\frac{4\theta L + \gamma}{2\theta LT}  \sum_{s=1}^{S} \frac{\sigma^2}{bs}\\
\leq & \frac{(2\theta+\gamma\eta)\Delta}{\theta \eta T} +\frac{(4\theta L + \gamma)\sigma^2 (\log(S)+1)}{2b\theta LT}\\
\leq & \frac{(2\theta+\gamma\eta)\Delta}{\theta \eta T} +\frac{(4\theta L + \gamma)\sigma^2 (\frac{1}{2}\log(2T/b)+1)}{2b\theta LT},
\end{align*}
where the last second inequality is due to $\sum_{s=1}^S \frac{1}{s}\leq \log(S) + 1$; the last inequality is due to $S\leq \sqrt{S(S+1)} = \sqrt{\frac{2T}{b}}$.
Since $\eta = \frac{c}{L}$ with $0<c<\frac{1}{3}$, then $\theta = \frac{1-3\eta L}{2\eta}>0$.

~~

Similarly, the proof for the finite-sum setting can be obtained by a slight change in above analysis using the fact that 
\begin{align*}
\E[ \|\g_{(n_{t}-1)q} - \nabla f(\x_{(n_{t}-1)q})\|^2] = 0.
\end{align*}
in Lemma~\ref{lem:aux:1}. 
Following the similar analysis, we will have
\begin{align*}
\E_R[\text{dist}(0,\hat \partial F(\x_{R}))^2]
\leq \frac{2\theta\Delta + \gamma \eta\Delta}{\eta \theta T}.
\end{align*}
\end{proof}

\end{document}